	\definecolor{linkred}{rgb}{0.7,0.2,0.2}
	\definecolor{linkblue}{rgb}{0,0.2,0.6}
\def\bib@div@mark#1{%
 \@mkboth{{#1}}{{#1}}%
	}
\def\print@backrefs#1{%
 \space\SentenceSpace$\leftarrow$\csname br@#1\endcsname
}
\renewcommand{\PrintAuthors}[1]{%
 \ifx\previous@primary\current@primary
  \sameauthors\@empty
 \else
  \def\current@bibfield{\bib'author}%
% IM hack IM hack here
%				\PrintNames{}{}{#1}%
		  \PrintNames{}{}{\scshape #1}%
% IM hack IM hack here
 \fi
}
\def\MRhref#1#2{%
 \begingroup
  \parse@MR#1 ()\@empty\@nil%
  \href{\MR@url}{\texttt{\@tempd\vphantom{()}}}%
  \ifx\@tempe\@empty
  \else
   \ \href{\MR@url}{\texttt{(\@tempe)}}%
  \fi
 \endgroup
}%
\def\MR#1{%
 \relax\ifhmode\unskip\spacefactor3000 \space\fi
 \begingroup
  \strip@MRprefix#1\@nil
  \edef\@tempa{\@nx\MRhref{MR\@tempa}{\@tempa}}%
 \@xp\endgroup
 \@tempa
}
\numberwithin{equation}{section}
\newtheorem{theorem}[equation]{Theorem}
\newtheorem{theorem*}{Theorem}
\newtheorem{definition}[equation]{Definition}
\newtheorem{lemma}[equation]{Lemma}
\newtheorem{corollary}[equation]{Corollary}
\newtheorem{proposition}[equation]{Proposition}
\newtheorem{example}[equation]{Example}
\newtheorem{remark}[equation]{Remark}
\newtheorem*{problem}{Problem}
\newcommand{\rank}{\operatorname{rank}}
\newcommand{\sL}{\mathfrak{sl}}
\newcommand{\rk}{\operatorname{rank}}
\begin{document}

\pagenumbering{arabic}

\title{quantum kostka and the rank one problem for $\mathfrak{sl}_{2m}$}
\author{Natalie Hobson}
%\address{}
%\email{}
%\subjclass[]{}
%\keywords{}
\date{\today}
\maketitle

\begin{abstract}
We give necessary and sufficient conditions to specify vector bundles of conformal blocks for $\mathfrak{sl}_{2M}$ with rectangular weights which have ranks $0$, $1$, and larger than $1$. First Chern classes of rank one bundles are shown to determine a finitely generated subcone of the nef cone.    

%\medskip
%\noindent \text{MSC Classes.} 14N35, 14M15, 14D20, 14N10, 14H10
\end{abstract}

\section{Introduction}
Given a simple Lie algebra $\mathfrak{g}$, a positive integer $\ell$, and an $n$-tuple $\vec{\lambda}$, of dominant integral weights for $\mathfrak{g}$ at level $\ell$, one can construct a globally generated vector bundle
$\mathbb{V}(\mathfrak{g}, \vec{\lambda}, \ell)$
on the moduli space  of stable $n$-pointed rational curves $\overline{\operatorname{M}}_{0,n}$, referred to as a conformal blocks vector bundle \cite{TUY}, \cite{Fakh}. The first Chern class of such vector bundles, the conformal blocks divisor $c_1(\mathbb{V}(\mathfrak{g}, \vec{\lambda}, \ell))$, nonnegatively intersects all curves on $\overline{\operatorname{M}}_{0,n}$, and so is an element of the cone of nef divisors $\operatorname{Nef}(\overline{\operatorname{M}}_{0,n})$. 

As Fakhruddin did in \cite{Fakh}, it is natural to ask whether the subcone of the nef cone generated by all conformal blocks divisors is finitely generated. 
A simpler problem, considered here for a specific set $\mathcal{S}$, consists of two questions:
\begin{problem}\label{Question}
\begin{enumerate}
\item Describe sets $\mathcal{S}:=\{ \mathbb{V}=\mathbb{V}(\mathfrak{g},\vec{\lambda},\ell) \ | \   \operatorname{rank}(\mathbb{V}(\mathfrak{g}, \vec{\lambda}, \ell))=1\}$.
\item  Show that 
$\mathcal{C}(\mathcal{S}):=\text{ConvHull}\{c_1(\mathbb{V})  \ | \ \mathbb{V} \in \mathcal{S}\}$
 is finitely generated.
 \end{enumerate}
\end{problem}

For example, nontrivial level one bundles in type A have rank one \cite[5.2.1]{Fakh}; the cone generated by $\mathcal{G} := \{\mathbb{V}=\mathbb{V}(\sL_{r+1},\vec{\lambda},1) \ | \ \mathbb{V} \ne 0\}$ is finitely generated \cite[1.1]{GiansiracusaGibney}.  

We examine this question for $\mathfrak{g}=\mathfrak{sl}_{2m}$, giving necessary and sufficient conditions which determine when the rank of $\mathbb{V}(\mathfrak{sl}_{2m}, \vec{\lambda}, \ell)$ is zero, one, and larger than one in case $\vec{\lambda}= (c_1\omega_m, ..., c_n\omega_m)$, which we refer to as \textit{rectangular weights}.

\begin{theorem} \label{mainIntro} 
Let $\mathbb{V}_m=\mathbb{V}(\mathfrak{sl}_{2m},(c_1\omega_m, ..., c_n\omega_m), \ell)$ as above with $c_1 \geq c_2 \geq ... \geq c_n$ and $\sum_{i=1}^n c_i = 2(k\ell+p)$, for some integers $p$ and $k$ such that $1\leq p \leq \ell$ and $k\geq 0$.  Denote $\Lambda = \sum_{i=2k+2}^n c_i$ where $\Lambda:=0$ if $2k+2 > n$.  Then
\begin{enumerate}
\item $\rk(\mathbb{V}_m)=0$ iff $\Lambda <p$; 
\item $\rk(\mathbb{V}_m)=1$ iff either
\begin{enumerate}
\item $\Lambda =p$, or
\item $\Lambda >p$ and the weight content is maximal (see Def \ref{MaxContent}); and
\end{enumerate}
\item $\rk(\mathbb{V}_m)>1$ iff $\Lambda >p$,  and the weight content is not maximal.
\end{enumerate}
\end{theorem}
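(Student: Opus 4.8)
The plan is to translate the rank computation into a combinatorial problem about a quantum product in a small quantum cohomology ring, and then analyze that product explicitly. Rectangular weights $c_i\omega_m$ for $\mathfrak{sl}_{2m}$ correspond to Young diagrams that are $c_i \times m$ rectangles (columns of height $m$), so the relevant conformal blocks rank is governed by the fusion/Verlinde numbers for $\mathfrak{sl}_{2m}$ at level $\ell$, which by the work on the quantum cohomology of Grassmannians (the quantum Pieri/Littlewood--Richardson rule) can be identified with structure constants in $QH^*(\Gr(m,2m))$ truncated at level $\ell$. Thus the first step is to invoke this dictionary to rewrite $\rk(\mathbb{V}_m)$ as a coefficient extracted from the $\ell$-fold (with multiplicities $c_i$) quantum product $\sigma_{(m^{c_1})}\star\cdots\star\sigma_{(m^{c_n})}$ in the $\ell$-truncated ring, i.e. as a ``quantum Kostka number.'' The parameters $p$ and $k$, defined via $\sum c_i = 2(k\ell+p)$ with $1\le p\le\ell$, should emerge as recording how many full powers of the quantum parameter $q$ (each contributing a ``wrap'' of $2\ell$ to the total box count, since $\Gr(m,2m)$ has the relevant degree shift) are forced, with $p$ the residual level-$\ell$ weight that must be matched.

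Next I would establish the vanishing criterion (1). The inequality $\Lambda = \sum_{i\ge 2k+2} c_i < p$ should be exactly the statement that, after peeling off the $2k+1$ largest rectangles (which are forced to ``use up'' $k$ units of $q$ and leave a residual rectangle of width $p$), there is not enough remaining width among the tail weights $c_{2k+2},\dots,c_n$ to fill the residual $p$-wide strip; by the quantum Pieri rule this makes the extracted coefficient zero. This is where the hypothesis $c_1\ge\cdots\ge c_n$ is essential: it guarantees that the greedy/maximal packing is achieved by taking the largest weights first, so that the single scalar $\Lambda$ controls feasibility. I would prove the ``iff'' by showing both that $\Lambda<p$ forces every term in the quantum product expansion to miss the target class, and conversely that $\Lambda\ge p$ produces at least one nonzero term.

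Then comes the rank-one versus rank-$>1$ dichotomy, items (2) and (3). When $\Lambda = p$ the packing is \emph{tight}: there is a unique way to distribute the tail weights, so exactly one term survives and the rank is $1$. When $\Lambda > p$ there is ``slack,'' and generically one expects several surviving terms; the exception is when the \emph{weight content is maximal} in the sense of Definition~\ref{MaxContent} (referenced above), which should be precisely the degenerate configuration in which, despite the slack, the combinatorial choices are all forced into a single tableau --- for instance when all the extra width is concentrated so that only one rectangular filling is legal within the level-$\ell$ and $\Gr(m,2m)$ constraints. I would prove (2b) by showing maximal content collapses the set of valid tableaux to a singleton, and prove (3) by exhibiting, whenever $\Lambda>p$ and content is not maximal, two distinct valid tableaux (hence rank at least $2$). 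The main obstacle I anticipate is precisely this last step: pinning down the right notion of ``maximal content'' so that it exactly separates the rank-one locus from the rank-$\ge 2$ locus, and then producing, in the non-maximal case, an explicit second tableau in a uniform way --- this requires a careful hands-on analysis of the quantum Pieri rule for column-shaped classes in $QH^*(\Gr(m,2m))$ and control over how the $q$-powers interact with the $\ell$-truncation, rather than any single slick argument.
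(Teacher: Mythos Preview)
Your proposal has a structural gap and a technical error that together would block the argument. First, the Grassmannian is misidentified: Witten's dictionary for $\mathfrak{sl}_{2m}$ at level $\ell$ lands in $QH^*(\Gr(2m,\mathbb{C}^{2m+\ell}))$, not $QH^*(\Gr(m,2m))$; there is no ``truncation at level $\ell$'' of a fixed Grassmannian ring, and the $q$-degree in $\Gr(m,2m)$ is $2m$, not $2\ell$, so your box-count bookkeeping for $k$ and $p$ does not match. Second, and more seriously, in the correct ring the Schubert classes $\sigma_{c_i\omega_m}$ are indexed by $m\times c_i$ \emph{rectangles}, which for $m>1$ are not Pieri classes. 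The quantum Kostka formula you invoke (products of one-row classes counted by semistandard tableaux on a skew shape) is only available for single-row classes, so the uniform ``quantum Pieri for column-shaped classes'' analysis you sketch has no direct meaning here; you would need full quantum Littlewood--Richardson coefficients, and the clean tableau-counting picture disappears.

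The paper avoids this entirely by a reduction you are missing: it proves the theorem first for $m=1$ --- where the weights $c_i\omega_1$ \emph{are} one-row Pieri classes in $\Gr(2,2+\ell)$, the quantum Kostka description applies, and the rank becomes the number of proper semistandard tableaux of shape $(\ell^{2k},p^2)$ with content $(c_1,\dots,c_n)$ --- and then transfers the result to general $m$ via a rank-scaling statement (Proposition~\ref{bgkscaling}) proved using strange duality together with the quantum Fulton conjecture. All of the $\Lambda$ versus $p$ analysis and the ``maximal content'' dichotomy is carried out in that two-row tableau model. Your outline of that dichotomy (tight packing when $\Lambda=p$, exhibiting two tableaux when $\Lambda>p$ and content is not maximal) is morally right, but it only becomes executable once you are in the $\mathfrak{sl}_2$ setting; you should insert the scaling reduction as the first step and then run your combinatorics on the shape $(\ell^{2k},p^2)$.
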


To prove Theorem \ref{mainIntro}, we first establish the statement for $\mathfrak{sl}_2$ bundles in Section \ref{rank} and then apply a rank scaling statement for $\sL_{2m}$ rank one bundles with rectangular weights, Proposition \ref{bgkscaling}, which relies partially on work of Belkale, Gibney, and Kazanova (see Proposition \ref{GITScaling}). Further details of their proposition, and the converse in our specific situation, is outlined in Section 7.

In particular, Theorem \ref{mainIntro} characterizes the ranks of all $\mathfrak{sl}_2$  bundles.  
The set $\mathcal{S}$ of rank one bundles described by Theorem \ref{mainIntro} contains an infinite number of elements, including all $\sL_2$ level one bundles, and so following work of Fakhruddin,  $\mathcal{C}(\mathcal{S})$  forms a full dimensional subcone of the nef cone $\operatorname{Nef}(\overline{\operatorname{M}}_{0,n})$.  We prove this cone $\mathcal{C}(\mathcal{S})$  is finitely generated.

\begin{theorem}Let $\mathcal{S}=\{\mathbb{V}=\mathbb{V}(\sL_{2m},(c_1\omega_m, ..., c_n\omega_m), \ell): \rk(\mathbb{V})=1\}$.  
Then
$\mathcal{C}(\mathcal{S}):=\text{ConvHull}\{c_1(\mathbb{V})  \ | \ \mathbb{V} \in \mathcal{S}\}$
 is finitely generated.
\end{theorem}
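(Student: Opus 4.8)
The plan is to reduce the statement to a finiteness result about a combinatorially described family of nef divisors on $\overline{M}_{0,n}$. By Theorem \ref{mainIntro}, the rank one bundles in $\mathcal{S}$ split into two types: those with $\Lambda = p$ (the ``boundary'' case) and those with $\Lambda > p$ and maximal weight content. For each type, I would first invoke Proposition \ref{bgkscaling}, the rank-scaling statement: a rank one $\mathfrak{sl}_{2m}$ bundle with rectangular weights $\mathbb{V}(\mathfrak{sl}_{2m},(c_1\omega_m,\dots,c_n\omega_m),\ell)$ has the same first Chern class (up to a controlled scaling) as an associated rank one $\mathfrak{sl}_2$ bundle $\mathbb{V}(\mathfrak{sl}_2,(c_1\omega_1,\dots,c_n\omega_1),\ell')$. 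This collapses the problem from $\mathfrak{sl}_{2m}$ for all $m$ down to $\mathfrak{sl}_2$, so it suffices to show that the cone generated by $c_1$ of all rank one $\mathfrak{sl}_2$ bundles (with the scalings that arise) is finitely generated.

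Next I would use Fakhruddin's explicit formulas \cite{Fakh} for conformal blocks divisors in terms of the standard generators of $\mathrm{Pic}(\overline{M}_{0,n})$ (the boundary classes $\delta_{I}$, or equivalently the $\psi$-classes and boundary). The key structural input is that for fixed $n$, as $\ell$ and the $c_i$ vary over the rank one locus, the coefficients of $c_1(\mathbb{V})$ in this basis are given by piecewise-linear (indeed, piecewise-polynomial of bounded degree) functions of the data, and more importantly the \emph{rays} spanned by these Chern classes take only finitely many values. Concretely, for $\mathfrak{sl}_2$ level one bundles, $c_1(\mathbb{V}(\mathfrak{sl}_2,\vec\lambda,1))$ is a known finite set of divisors up to scaling (the $F$-conformal-blocks divisors indexed by subsets), and \cite{GiansiracusaGibney} already shows that subcone is finitely generated. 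For the higher-level rank one $\mathfrak{sl}_2$ bundles I would show, using the factorization/propagation rules and the scaling behavior in $\ell$, that every such $c_1$ is proportional to one coming from a bounded-level bundle, so only finitely many rays occur.

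The third step is to assemble these finitely many rays into a single finitely generated cone. Since $n$ is fixed, $\mathrm{Nef}(\overline{M}_{0,n})$ lives in a fixed finite-dimensional vector space; if I have shown that $\{c_1(\mathbb{V}) : \mathbb{V}\in\mathcal{S}\}$ meets only finitely many one-dimensional subspaces, then $\mathcal{C}(\mathcal{S})$ is the convex hull of finitely many rays, hence a rational polyhedral cone. I would be careful to treat the $\Lambda = p$ case and the maximal-weight-content case uniformly, checking that the scaling in Proposition \ref{bgkscaling} does not produce an unbounded family of directions — only an unbounded family of positive multiples of boundedly many directions, which is harmless for the convex hull.

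The main obstacle I anticipate is the second step: controlling the rays of $c_1(\mathbb{V})$ for \emph{arbitrarily high level} rank one $\mathfrak{sl}_2$ bundles. A priori the level $\ell$ and the weights $c_i$ grow together (subject to $\Lambda = p$ or maximal content), and one must rule out infinitely many genuinely new directions appearing. I expect this to follow from a homogeneity/scaling identity for conformal blocks divisors — roughly, $c_1(\mathbb{V}(\mathfrak{sl}_2,(Nc_1\omega_1,\dots),N\ell)) = N\cdot c_1(\mathbb{V}(\mathfrak{sl}_2,(c_1\omega_1,\dots),\ell))$ together with additivity under ``splitting'' a weight — combined with the constraint that the rank stays one, which forces the data into a bounded fundamental domain modulo such scalings. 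Making this bounded-fundamental-domain statement precise, using the inequality $\Lambda \le p$ respectively the maximality of weight content from Definition \ref{MaxContent}, is where the real work lies; everything after that is convex geometry in a fixed-dimensional space.
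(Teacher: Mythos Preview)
Your approach has a genuine gap at the step you yourself flag as the obstacle. You want to conclude finite generation by showing that the set $\{c_1(\mathbb{V}) : \mathbb{V}\in\mathcal{S}\}$ spans only finitely many rays, and you propose to achieve this by reducing every rank one $\mathfrak{sl}_2$ bundle to one of bounded level via the horizontal scaling $c_1(\mathbb{V}(\sL_2,N\vec\lambda,N\ell))=N\,c_1(\mathbb{V}(\sL_2,\vec\lambda,\ell))$. But this scaling only relates $(c_1,\ldots,c_n,\ell)$ to $(c_1/N,\ldots,c_n/N,\ell/N)$ when $N$ divides all the data; for generic rank one data with $\gcd(c_1,\ldots,c_n,\ell)=1$ there is no such reduction, and the piecewise-linear/fundamental-domain picture you sketch does not force the level into a bounded range. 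In fact the ``finitely many rays'' statement is almost certainly false: once one has the paper's decomposition (below), one sees that varying the coefficients $(\ell-c_i)$ and $c_j$ produces a continuum of distinct rays inside a simplicial cone, not a finite set.

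The paper's argument is quite different in spirit and bypasses this difficulty entirely. It does not try to bound the rays; instead it proves directly (Theorem \ref{divisors}) that for every $\mathbb{V}_m\in\mathcal{S}$ the divisor $c_1(\mathbb{V}_m)$ is an explicit \emph{effective} integer combination of first Chern classes of a fixed finite collection of level one bundles $V_{A,B}=\mathbb{V}(\sL_{2m},\vec v_{A,B},1)$, indexed by subsets $A,B\subset\{0,\ldots,n\}$. The mechanism is combinatorial: when $\rk(\mathbb{V}_m)=1$ there is a unique Young tableau on $\varrho$; peeling off its last column (Lemma \ref{removecolumn}) yields a rank one bundle at level $\ell-1$ together with a level one bundle determined by that column's content, and Proposition 1.2 of \cite{bgm} gives $c_1(\mathbb{V}_m)=c_1(\text{level }\ell-1\text{ piece})+c_1(\text{level one piece})$. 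Iterating strips $\mathbb{V}_m$ down to a sum of level one divisors. Since each $V_{A,B}$ is itself a level one rectangular bundle, hence rank one (Fakhruddin) and so in $\mathcal{S}$, the cone $\mathcal{C}(\mathcal{S})$ is exactly the cone spanned by these finitely many $c_1(V_{A,B})$, giving finite generation (and in fact $\mathcal{C}(\mathcal{S})\subset\mathcal{C}(\mathcal{G})$, the level one cone of \cite{GiansiracusaGibney}). The moral: finite generation here comes from an additive decomposition into level one pieces, not from a multiplicative scaling to bounded level.
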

To prove this, we show the following result\footnote{See  Theorem \ref{divisors} for a precise statement.}, which gives  $\mathcal{C}(\mathcal{S}) \subset \mathcal{C}(\mathcal{G})$.
\begin{theorem}\label{ParaDivisors}
Each element of $\mathcal{C}(\mathcal{S})$ can be expressed explicitly as an effective linear combination of level one divisors determined by the weights.
\end{theorem}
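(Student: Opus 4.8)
The plan is to reduce the statement to $\sL_{2}$, compute the first Chern class of a rank-one $\sL_{2}$ bundle explicitly by Fakhruddin's genus-zero formula, and then recognize the resulting class as a nonnegative combination of level-one divisor classes read off from the weights. Since $\mathcal{C}(\mathcal{S})$ is by definition the convex cone on $\{c_{1}(\mathbb{V}):\mathbb{V}\in\mathcal{S}\}$, and an effective combination of effective combinations of level-one classes is again one, it suffices to treat a single $\mathbb{V}=\mathbb{V}(\sL_{2m},(c_{1}\omega_{m},\dots,c_{n}\omega_{m}),\ell)\in\mathcal{S}$. By Theorem \ref{mainIntro}, rank one means either $\Lambda=p$ or else $\Lambda>p$ with maximal weight content, in either case a condition on $(\vec{c},\ell)$ alone. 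Applying the scaling of Propositions \ref{bgkscaling} and \ref{GITScaling}, the class $c_{1}(\mathbb{V})$ agrees, up to the explicit positive factor recorded there, with $c_{1}(\mathbb{V}_{1})$ for the corresponding $\sL_{2}$ bundle $\mathbb{V}_{1}=\mathbb{V}(\sL_{2},(c_{1}\omega_{1},\dots,c_{n}\omega_{1}),\ell)$; since the rank conditions do not involve $m$, $\mathbb{V}_{1}$ again has rank one, and I may assume $m=1$.

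For $\mathbb{V}_{1}$ I would write out Fakhruddin's formula for $c_{1}$ on $\overline{M}_{0,n}$: it has the shape $\sum_{i}\frac{c_{i}(c_{i}+2)}{4(\ell+2)}\,\psi_{i}-\sum_{S}b_{S}\,\delta_{0,S}$, where $b_{S}=\sum_{a=0}^{\ell}\frac{a(a+2)}{4(\ell+2)}\cdot\rk\bigl((\mathbb{V}_{1})_{S}(a\omega_{1})\bigr)\cdot\rk\bigl((\mathbb{V}_{1})_{S^{c}}(a\omega_{1})\bigr)$. Because $\rk(\mathbb{V}_{1})=1$, factorization forces, for each boundary divisor $\delta_{0,S}$, a unique level-$\ell$ weight $a=a(S)$ to contribute, with both factor ranks equal to $1$; hence $b_{S}=\frac{a(S)(a(S)+2)}{4(\ell+2)}$, and $a(S)$ is pinned down by $\vec{c}$, $\ell$, and $S$ through the $\sL_{2}$ fusion rules. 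This makes $c_{1}(\mathbb{V}_{1})$ fully explicit in the standard generators of $\Pic(\overline{M}_{0,n})$. (Equivalently, one may phrase this using the GIT description underlying Proposition \ref{GITScaling}: a rank-one bundle descends from an explicit ample line bundle on a GIT quotient, as do the level-one divisors, so the target identity becomes a relation among descended classes.)

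Next I would produce the target level-one divisors. For a tuple of fundamental weights $\omega_{a_{i}}$ of $\sL_{r+1}$ with $\sum a_{i}\equiv 0\pmod{r+1}$ the bundle $\mathbb{V}(\sL_{r+1},(\omega_{a_{i}}),1)$ has rank one and a first Chern class of the same shape (a positive $\psi$-part minus boundary), cf.\ \cite[5.3]{Fakh} and \cite{GiansiracusaGibney}. The idea is to split each weight $c_{i}\omega_{1}$ into a controlled number of level-one fundamental weights for a suitable $\sL_{r+1}$, the splitting being dictated by the quantities $k$, $p$, $\Lambda$ of Theorem \ref{mainIntro}, and then to check coefficient by coefficient that the corresponding sum of level-one classes equals $c_{1}(\mathbb{V}_{1})$: the $\psi_{i}$-coefficients match because $\frac{c_{i}(c_{i}+2)}{4(\ell+2)}$ adds up correctly over the splitting, and the boundary coefficients match because the node weights $a(S)$ are governed by the same splitting. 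When $\Lambda=p$ the weights are balanced, the contributing node weights are rigidly determined, and this is essentially bookkeeping.

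The crux is the case $\Lambda>p$ with maximal weight content (Definition \ref{MaxContent}): here $\mathbb{V}_{1}$ is rank one only by virtue of maximality, and the point is to show that precisely this hypothesis guarantees that the a priori merely rational coefficients $b_{S}$ assemble into a nonnegative combination of level-one boundary contributions, with no cancellation forcing a negative multiplicity. I expect to handle this by induction on $n$ (equivalently on $\sum c_{i}$): when some $c_{i}$ is small one peels that marked point off by propagation of vacua, landing on a smaller rectangular configuration that is again rank one and of maximal content (or falls into the case $\Lambda=p$), applies the inductive hypothesis there, and then reattaches the point, matching the extra $\psi_{i}$- and boundary-contributions against one further level-one factor determined by $c_{i}$. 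Summing the inductive pieces produces the asserted explicit effective expression for $c_{1}(\mathbb{V})$; since $\mathbb{V}\in\mathcal{S}$ was arbitrary, the theorem follows, and together with the precise form in Theorem \ref{divisors} and \cite[1.1]{GiansiracusaGibney} this gives $\mathcal{C}(\mathcal{S})\subset\mathcal{C}(\mathcal{G})$ and the finite generation of $\mathcal{C}(\mathcal{S})$.
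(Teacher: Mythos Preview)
Your reduction to $m=1$ via the scaling of Proposition~\ref{bgkscaling} is fine and matches the paper. The gap is in the heart of the argument: your plan to verify the decomposition by matching coefficients in Fakhruddin's formula does not work as stated. The $\psi_i$-coefficient of $c_1(\mathbb{V}_1)$ is $\frac{c_i(c_i+2)}{4(\ell+2)}$, which is quadratic in $c_i$; summing the $\psi_i$-coefficients over the $\ell$ level-one bundles in the intended decomposition (each contributing $\frac{1\cdot 3}{4\cdot 3}=\tfrac14$ when flavor $i$ appears, and flavor $i$ appears in exactly $c_i$ of them) gives $\tfrac{c_i}{4}$, which equals the target only when $c_i=\ell$. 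So the claim that ``$\frac{c_i(c_i+2)}{4(\ell+2)}$ adds up correctly over the splitting'' is false in general. The equality of divisor classes still holds, but only modulo the relations among $\psi$-classes and boundary classes in $\Pic(\overline{M}_{0,n})$, so a termwise comparison in the $\psi$--$\delta$ expression proves nothing. Your proposed induction in the maximal case also stalls: propagation of vacua requires a zero weight, which is unavailable when $n-3$ of the $c_i$ equal $\ell$ and the remaining three are positive.

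The paper takes a different route that avoids any direct coefficient bookkeeping. It exploits the Young-tableau description of $\rk(\mathbb{V}_1)$ from Lemma~\ref{numberyoungtab}: since the tableau on $\varrho$ is unique, Lemma~\ref{removecolumn} shows that deleting its last column yields the unique tableau for the bundle with weights $\vec{\lambda}^{-c}$ at level $\ell-1$, so that bundle again has rank one. The crucial external input is Proposition~1.2 of \cite{bgm}, which for such a splitting gives $c_1(\mathbb{V}_1)=c_1\bigl(\mathbb{V}(\sL_2,\vec{\lambda}^{-c},\ell-1)\bigr)+c_1(V_{i_\ell,0})$. Iterating column by column produces the explicit effective sum of level-one divisors recorded in Theorem~\ref{divisors}, with each summand read off from a single column of the unique tableau. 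This is the missing idea: rather than comparing Chern-class formulas, one peels off one level at a time using the tableau combinatorics and invokes the additivity theorem of \cite{bgm} at each step.
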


Theorem \ref{mainIntro}, proved in Section $4$,  also gives information about vector bundles of conformal blocks for the Lie algebra $\mathfrak{sp}_{2\ell}$ at level one. Using
$$ \operatorname{rank}(\mathbb{V}(\mathfrak{sl}_2, \vec{\lambda}, \ell))=\operatorname{rank}(\mathbb{V}(\mathfrak{sp}_{2\ell}, \vec{\lambda}^T, 1)),$$ from \cite[5.2.3]{Fakh} where
 $\vec{\lambda} = (c_1\omega_{1}, ..., c_n\omega_{1})$ and $\vec{\lambda}^T = (\omega_{c_1}, ..., \omega_{c_n})$,  the same conditions on $(c_1,\ldots,c_n)$ given in Theorem \ref{mainIntro} also determine when $\mathbb{V}=\mathbb{V}(\mathfrak{sp}_{2\ell},(\omega_{c_1}, ..., \omega_{c_n}), 1)$ has rank zero, one, and greater than one.

\subsection{Outline of the paper and a note on our methods}

\medskip
In Section 2 we state a method for computing ranks of conformal blocks vector bundles known as \textit{Witten's dictionary} and relevant background on \textit{Kostka numbers} necessary to compute ranks. In Section 3 we combine these tools to our specific situation in order to state an explicit method for computing ranks by means of counting Young tableaux. We then give several definitions relevant to our approach and describe an algorithm for creating Young tableaux. In Section 4 we prove our main result, Theorem \ref{mainIntro}, by applying the algorithm from Section 3. In Section 5, we describe the decomposition of rank one vector bundles described in Theorem \ref{mainIntro}. In the final section, we give a proof of the scaling statement, Proposition \ref{GITScaling}, communicated to us by the authors of \cite{bgk}.

\medskip

\section{Background}\label{background}
The normalized dominant integral weights of level $\ell$ for the simple Lie algebra $\mathfrak{sl}_{r+1}$ are parametrized by Young diagrams, or partitions, in $r \times \ell$. Such a weight is thus given by an $r$-tuple of integers $\lambda =  (\lambda^{(1)}, ..., \lambda^{(r)})$ such that $\ell \geq \lambda^{(1)} \geq ... \geq \lambda^{(r)} \geq 0$. The Young diagram associated to this weight contains $r$ rows with $\lambda^{(i)}$ boxes in the $i^{th}$ row. We call $|\lambda| = \sum_{i=1}^r \lambda^{(i)}$ the \textit{area} of the weight $\lambda$. With this notation, a basis of the fundamental dominant weights $\omega_i$ for $\mathfrak{sl}_{r+1}$ is written $\omega_j =(1,\ldots,1,0,\ldots,0)$ where $|\omega_j|=j$. If $\vec{\lambda} = (\lambda_1, ..., \lambda_n)$ is a collection of $n$ dominant integral weights, then the total area of the weights is the sum of all weights in our collection, $|\vec{\lambda}| = \sum_{i=1}^n |\lambda_i|$. In order to define a conformal blocks vector bundle with $\mathfrak{sl}_{r+1}$, weight $\vec{\lambda}$, and level $\ell$ it is necessary that $|\vec{\lambda}| = (r+1)(\ell +s)$ for some integer $s$. We refer to a collection of $n$ weights for the Lie algebra $\mathfrak{sl}_{2m}$ which are all multiples of the same fundamental dominant weight $\omega_m$ as \textit{rectangular}. The weight vector $\vec{\lambda} = (c_1\omega_m, ..., c_n\omega_m)$ we often write as $(c_1, ..., c_n)$ when the fundamental dominant weight is clear from context.

\subsection{Witten's Dictionary and Computing Ranks}
Schubert classes in the cohomology ring $H^*(Gr(r+1,\mathbb{C}^{m}); \mathbb{Z})$ and the (small) quantum cohomology ring $QH^*(Gr(r+1,\mathbb{C}^{p}); \mathbb{Z})$ are indexed by partitions $\lambda \subset (r+1) \times m-(r+1)$. From \cite{BelkaleWittenDic} we have the following connection between ranks of $\mathbb{V}(\mathfrak{sl}_{r+1}, \vec{\lambda}, \ell)$ and cohomology computations. The following result is often referred to as \textit{Witten's Dictionary}.

\begin{proposition}\label{witten} \textbf{Witten's Dictionary.}\\
To compute $\rk(\mathbb{V}(\mathfrak{sl}_r, \vec{\lambda}, \ell))$ with $|\vec{\lambda}| = (r+1)(\ell +s)$, we consider the following two cases:

\begin{itemize}
\item[(1)]  If $s \leq 0$, then $\rank(\mathbb{V}(\mathfrak{sl}_{r+1}, \vec{\lambda}, \ell))$ is equal to the coefficient of the class of a point $\sigma_{(\ell + s)\omega_{r+1}} = \sigma_{(\ell +s, ..., \ell+s)} $ in the classical product: $$\sigma_{\lambda_1} \cdot ... \cdot \sigma_{\lambda_n} \in H^*(Gr(r+1, \mathbb{C}^{r+1+\ell+s})).$$
\item[(2)]   If $s > 0$, then $\rk(\mathbb{V}(\mathfrak{sl}_{r+1}, \vec{\lambda}, \ell))$ is equal to the coefficient of $q^s\sigma_{\ell \omega_{r+1}}$ in the quantum product: $$\sigma_{\lambda_1} * ... * \sigma_{\lambda_n} * \sigma^s_{\ell \omega_1}\in QH^*(Gr(r+1, \mathbb{C}^{r+1+\ell})),$$
where $\sigma_{\ell \omega_{r+1}} = \sigma_{\ell, ..., \ell}$ and $\sigma^s_{\ell \omega_1} = \sigma^s_{(\ell, 0, ..., 0)} $.
\end{itemize}

\end{proposition}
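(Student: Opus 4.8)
The plan is to reduce the statement, following \cite{BelkaleWittenDic}, to the Verlinde formula together with the Gepner--Witten presentation of the fusion ring. First, by propagation of vacua and factorization, $\rk \mathbb{V}(\sL_{r+1},\vec{\lambda},\ell)$ equals the dimension of the space of conformal blocks, which the Verlinde formula expresses as a structure constant of the fusion (Verlinde) ring $\mathcal{F}_\ell(\sL_{r+1})$: concretely, the multiplicity of the trivial module in the fusion product $V_{\lambda_1}\star\cdots\star V_{\lambda_n}$, given by the Kac--Walton character formula. The next step is to invoke the Gepner--Witten isomorphism of based rings $\mathcal{F}_\ell(\sL_{r+1})\cong QH^*\bigl(Gr(r+1,\C^{r+1+\ell})\bigr)\big/(q-1)$, sending the simple module $V_\lambda$, for $\lambda$ a partition in the $r\times\ell$ rectangle, to the Schubert class $\sigma_\lambda$ and the trivial module to $\sigma_\emptyset=1$; under this isomorphism the fusion pairing $\langle V_\mu,V_\nu\rangle=\delta_{\mu,\nu^*}$ matches the quantum Poincar\'e pairing, so ``multiplicity of the trivial module'' becomes ``coefficient of the point class $\sigma_{\ell\omega_{r+1}}$'' in the corresponding quantum product of Schubert classes.

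The remaining work is the bookkeeping that recovers the exact power of $q$ and the correct ambient Grassmannian, and this is where the two cases split. To track powers of $q$ one performs the computation in $QH^*(Gr(r+1,\C^{r+1+\ell}))$ itself, which is $\Z$-graded with $\deg\sigma_\lambda=|\lambda|$ and $\deg q=r+1+\ell$, and specializes $q=1$ only afterward; matching total degrees uses $|\vec{\lambda}|=(r+1)(\ell+s)$. When $s>0$ the product $\sigma_{\lambda_1}\cdots\sigma_{\lambda_n}$ has degree $s\ell$ short of $q^s\sigma_{\ell\omega_{r+1}}$, so one pads it by $s$ copies of the special Schubert class $\sigma_{\ell\omega_1}=\sigma_{(\ell,0,\dots,0)}$ (degree $\ell$); using the simple-current structure of $\mathcal{F}_\ell(\sL_{r+1})$, where $V_{\ell\omega_1}$ generates the $\Z/(r+1)$ of simple currents whose fusion action is realized in $QH^*$ as an invertible ``rotation'' costing one power of $q$ per factor, one checks this substitution is compatible with the fusion computation and pins the target monomial to $q^s\sigma_{\ell\omega_{r+1}}$, giving case (2). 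When $s\le 0$ there is enough room to avoid quantum corrections entirely: after checking the $\lambda_i$ are still valid Schubert indices for the smaller Grassmannian $Gr(r+1,\C^{r+1+\ell+s})$ (both sides vanish otherwise), one has $|\vec{\lambda}|=(r+1)(\ell+s)=\dim_\C Gr(r+1,\C^{r+1+\ell+s})$, so a degree count forces the classical product $\sigma_{\lambda_1}\cdots\sigma_{\lambda_n}$ to be a multiple of the point class $\sigma_{(\ell+s)\omega_{r+1}}$, whose coefficient is the rank --- case (1).

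I expect the main obstacle to be the careful transport of the Frobenius structure across the Gepner--Witten isomorphism, together with the $q$-degree bookkeeping: one must ensure that ``multiplicity of the trivial module'' corresponds to the coefficient of $\sigma_{\ell\omega_{r+1}}$ rather than a simple-current twist of it, and that the degree/simple-current argument produces exactly $q^s$ and exactly the claimed Grassmannian in each regime. A more hands-on alternative bypasses the fusion ring: compare the Verlinde formula directly with the Vafa--Intriligator formula for the genus-zero Gromov--Witten invariants of $Gr(r+1,\C^{r+1+\ell})$, both being sums of products of $S$-matrix-type ratios over the same index set (level-$\ell$ weights versus $(r+1+\ell)$-th roots of unity) which a change of variables identifies; equivalently, one may match the Kac--Walton alternating sum over the affine Weyl group against the rim-hook (quantum Littlewood--Richardson) rule of Bertram--Ciocan-Fontanine--Fulton for $QH^*(Gr)$, the crux again being to identify the affine Weyl action on weights with iterated rim-hook removal on partitions. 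In any case, the detailed argument is that of \cite{BelkaleWittenDic}, which I would cite for the proof.
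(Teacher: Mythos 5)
The paper does not actually prove this proposition: it is imported as background, stated with a citation to \cite{BelkaleWittenDic}, so there is no internal argument to compare against, and your closing move of deferring the detailed proof to that same reference is exactly what the paper does. Your sketch of how that proof goes (rank equals the multiplicity of the vacuum in the fusion product via the Verlinde/Kac--Walton formula, the isomorphism of the level-$\ell$ fusion ring with $QH^*(Gr(r+1,\C^{r+1+\ell}))/(q-1)$ matching the vacuum coefficient with the coefficient of the point class, and the $q$-degree plus simple-current bookkeeping that forces the $\sigma_{\ell\omega_1}^s$ padding when $s>0$ and the dimension count giving the classical case when $s\le 0$) is a sound outline consistent with that source.
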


\begin{remark}\label{invariantorder} 
The above products are commutative and the rank of $\mathbb{V}(\mathfrak{sl}_r, \vec{\lambda}, \ell)$ is invariant under the ordering of the weights in $\vec{\lambda}$. 
\end{remark}

The following scaling statement to prove Theorem \ref{mainIntro}. See Section \ref{scalingproof} for a proof of a more general statement in one direction communicated to us by Belkale, Gibney, and Kazanova.

 \begin{proposition}\label{bgkscaling} Let $\mathbb{V}_m=\mathbb{V}(\mathfrak{sl}_{2m},(c_1\omega_m, ..., c_n\omega_m), \ell)$, then 
 
 $\operatorname{rank}\mathbb{V}_m = 1$ if and only if $\operatorname{rank}\mathbb{V}_1 =1$. Furthermore,
 $c_1(\mathbb{V}_m)=m \ c_1(\mathbb{V})$.
\end{proposition}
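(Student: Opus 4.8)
The plan is to prove Proposition~\ref{bgkscaling} by combining a general ``rank scaling'' result (Proposition~\ref{GITScaling}, due to Belkale--Gibney--Kazanova) with an explicit analysis of the rectangular-weight case. Proposition~\ref{GITScaling} should give one direction essentially for free: if $\operatorname{rank}\mathbb{V}_m=1$ then, via a GIT/theta-level argument on the relevant Grassmannian or flag quotient, one deduces $\operatorname{rank}\mathbb{V}_1=1$ together with the Chern class identity $c_1(\mathbb{V}_m)=m\,c_1(\mathbb{V}_1)$ (since scaling both the Lie algebra rank and reading off first Chern classes is linear in the natural coordinates). So the work concentrates on the converse: assuming $\operatorname{rank}\mathbb{V}_1=1$, show $\operatorname{rank}\mathbb{V}_m=1$.

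For the converse I would use Witten's Dictionary (Proposition~\ref{witten}) to translate both ranks into Gromov--Witten / quantum Schubert calculus numbers. Writing $\sum c_i = 2(k\ell+p)$ with $1\le p\le \ell$, the rank of $\mathbb{V}_1=\mathbb{V}(\mathfrak{sl}_2,(c_1,\dots,c_n),\ell)$ is a coefficient in a (quantum) product on a projective space / small Grassmannian, while $\operatorname{rank}\mathbb{V}_m$ is the analogous coefficient on $\mathbf{Gr}(m,\mathbb{C}^{2m})$ with all weights equal to $c_i\omega_m$, i.e.\ rectangular $c_i\times m$ Young diagrams. The key structural input is that for rectangular weights $c_i\omega_m$ the relevant Littlewood--Richardson / quantum-Littlewood--Richardson coefficients factor: a Kostka-number computation (developed in Sections~2--3 of the paper via counting Young tableaux) shows the $\mathfrak{sl}_{2m}$ count equals a power or product of the corresponding $\mathfrak{sl}_2$ count, forcing the $\mathfrak{sl}_{2m}$ rank to be $1$ exactly when the $\mathfrak{sl}_2$ rank is $1$. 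Concretely, I would invoke the tableau-counting description to exhibit a bijection (or a factorization of the counting generating function) between the objects computing $\operatorname{rank}\mathbb{V}_1$ and those computing $\operatorname{rank}\mathbb{V}_m$, under the rank-one hypothesis.

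Once both ranks are pinned to $1$ simultaneously, the Chern class statement $c_1(\mathbb{V}_m)=m\,c_1(\mathbb{V}_1)$ would follow from Fakhruddin's formulas for $c_1$ of conformal blocks divisors: $c_1(\mathbb{V}(\mathfrak{g},\vec\lambda,\ell))$ is given by an explicit expression (involving the Casimir scalars of the weights, the rank of the bundle, and standard tautological classes on $\overline{M}_{0,n}$) that is, for fixed weight ``shape'' $\omega_m$ versus $\omega_1$ and rank $1$ in both cases, linear in the multiplicities $c_i$; comparing the two formulas and using that the Casimir/trace data for $c_i\omega_m$ on $\mathfrak{sl}_{2m}$ equals $m$ times that for $c_i\omega_1$ on $\mathfrak{sl}_2$ yields the factor of $m$. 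I would also remark that the scaling of the Chern class is consistent with, and in fact forced by, Proposition~\ref{GITScaling} together with the propagation/factorization rules, so the two directions agree.

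The main obstacle I expect is the converse direction, specifically establishing the tableau bijection/factorization uniformly in $m$: Proposition~\ref{GITScaling} as stated only gives the ``only if'' implication in general, so the ``if'' direction genuinely relies on the special combinatorial structure of rectangular weights $c_i\omega_m$ and must be argued by hand using the quantum-Kostka machinery of Sections~2--3. Handling the quantum (i.e.\ $s>0$, equivalently $k\ge 1$) case is the delicate part, since one must track the power of $q$ and ensure that the rectangular factorization respects the quantum corrections on $\mathbf{Gr}(m,\mathbb{C}^{2m})$; the classical case ($k=0$) should be a comparatively straightforward Littlewood--Richardson computation. A secondary, more bookkeeping-level obstacle is matching the normalization conventions between the conformal-blocks side and the (quantum) Schubert-calculus side so that the Chern class scaling comes out with exactly the constant $m$.
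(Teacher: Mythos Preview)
You have the two directions reversed. Proposition~\ref{GITScaling} as stated and proved in the paper gives the implication $\operatorname{rank}\mathbb{V}_1=1\Rightarrow\operatorname{rank}\mathbb{V}_m=1$ (vertical scaling \emph{up}), not the other way around; so the direction you propose to attack by hand via a quantum-Kostka bijection on $\mathbf{Gr}(m,\mathbb{C}^{2m})$ is precisely the one that comes for free, while the direction you assume is free is in fact the one needing a separate argument. Moreover, the mechanism behind Proposition~\ref{GITScaling} is not a direct GIT/theta computation: the paper passes through strange duality to an $\mathfrak{sl}_\ell$ bundle, applies the quantum Fulton conjecture (horizontal scaling by $m$), and then applies strange duality again to land in $\mathfrak{sl}_{m(r+1)}$. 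The paper never develops a tableau model for $\mathfrak{sl}_{2m}$ at all; the combinatorics in Sections~\ref{definitionandlemma}--\ref{rank} is entirely for $\mathfrak{sl}_2$.

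For the genuine converse ($\operatorname{rank}\mathbb{V}_m=1\Rightarrow\operatorname{rank}\mathbb{V}_1=1$), the paper (Corollary~\ref{converse}) observes that in the rectangular case the same strange-duality/Fulton argument runs with the non-integer scale $1/m$, because the intermediate $\mathfrak{sl}_{2m}$ bundle with weights $c_i\omega_m$ is already of the ``scaled'' shape so that $\mathbb{V}_{1/m}$ makes sense. Your proposed factorization of Littlewood--Richardson/quantum-Kostka numbers might give an alternative route, but you would need to establish it for the \emph{other} direction than you intended, and the quantum corrections on $\mathbf{Gr}(m,\mathbb{C}^{2m})$ would have to be controlled without the $\mathfrak{sl}_2$ rank-one hypothesis as input. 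Your Chern-class outline via Fakhruddin's degree formula and Casimir scaling is in the right spirit and matches Lemma~\ref{4PointGITScaling2}, though note the Casimir satisfies $c_{m(r+1)}(m\lambda(m))=m^3c_{r+1}(\lambda)$, with the extra powers of $m$ cancelled by the $\frac{1}{m(r+1+\ell)}$ prefactor and an application of horizontal (Fulton) scaling for degrees.
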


\subsection{Kostka numbers}\label{kostka}
According to \cite{Bertram}, to compute the classical or quantum product of a collection of $n$ simple classes (i.e., classes of the form $\sigma_{(c_i, 0, ..., 0)}$ where $c_i$ is a positive integer) with the class we use the following rule, where $\mu = (c_1, ..., c_n).$

\begin{enumerate}
\item \label{eq:ckostka} $\text{Classical: } \sigma_{(c_1, 0, ..., 0)} \cdot ... \cdot \sigma_{(c_n, 0, ..., 0)} \cdot \sigma_{\lambda} = \sum K^{\nu}_{\lambda, \mu}\sigma_{\nu} \in H^*(Gr(r+1, \mathbb{C}^{r+1+\ell+s}))$, 
where we sum over partitions $\nu \subset (r+1) \times (\ell+s)$ such that $|\nu| = |\lambda| + \sum_i^n c_i.$ 

The coefficient $K^{\nu}_{\lambda, \mu}$ is called the \textit{classical Kostka number}. This number is equal to the number of Young tableaux on the shape $\nu/\lambda$ with content $\mu$. 

\item \label{eq:qkostka} $\text{Quantum: } \sigma_{(c_1, 0, ..., 0)} * ... * \sigma_{(c_n, 0, ..., 0)} * \sigma_{\lambda} = \sum K^{\nu}_{\lambda, \mu, m}(r+1, \ell)q^m\sigma_{\nu} \in QH^*(Gr(r+1,\mathbb{C}^{r+1+\ell}); \mathbb{Z})$,
where we sum over partitions $\nu \subset (r+1) \times \ell$ and $m \geq 0$ such that $|\nu| + m(r+1+\ell) = |\lambda| + \sum_i^n c_i$ and $\mu= (c_1, ..., c_n)$.

The coefficient $K^{\nu}_{\lambda, \mu, m}(r+1, \ell)$ is called the \textit{quantum Kostka number}. This number is equal to the number of proper Young tableaux with shape $\nu[m]/\lambda$ and content $\mu$. \\
\end{enumerate}

The Young diagram $\nu[m]$ is obtained by adding $m$ rim hooks to $\nu$ each of size $r+1+\ell$ beginning in column $\ell$.  We obtain the shape $\nu[m]/\lambda$ by removing $\lambda$ from the top left of $\nu[m]$.  We say that such a diagram, $\nu[m]/\lambda$, has \textit{content} \footnote{\textit{Content} here is used as defined in \cite{Bertram}. } $ \mu$, if the boxes of $\nu[m]/\lambda$ are labeled with $c_1$ 1's, $c_2$ 2's, ..., and $c_n$ n's such that the rows are weakly increasing in value (left to right) and the columns are strictly increasing in value (top to bottom). To help eliminate confusion between the $c_i$ and $i$ in the above, we define terms for these objects. We also give a definition important in distinguishing vector bundles which have rank one for maximal reasons, we further elaborate on this in Section \ref{rank}.

\begin{definition} For a collection of content $\mu= (c_1, ..., c_n)$, we call $c_i$ the \textit{amount} or size of content and $i$ the \textit{flavor} of the content. 
\end{definition}

\begin{definition}\label{MaxContent}
A collection of content containing $n$ flavors $\mu= (\mu_1, ..., \mu_n)$, is  \emph{$\ell$-maximal} (or \emph{maximal} when $\ell$ is clear) if $n-3$ or more flavors have amounts of size $\ell$.
\end{definition} 

\begin{remark}\label{MaxConentRemark}
With $k$ and $p$ integers such that $\sum_{i=1}^n = 2(k\ell + p)$, the content $(c_1, ..., c_n)$ being maximal implies one of two situations depending the parity of $n$. If $n$ is odd then $2k+2=n$ and $c_i =\ell$ for $i = 1, ..., 2k$ and if $n$ is even then $2k+2=n$ and $c_i=\ell$ for $i=1, ..., 2k-1$. We will use this in both Theorem \ref{mainIntro} and Theorem \ref{divisors}. See proof of Lemma \ref{case2leftarrow}. 
\end{remark}

A Young diagram with content values is referred to as a $\textit{semistandard}$ Young tableau. Such a semistandard tableau is $\textit{proper}$ if for all positive integers $q$ (such that $(r+1)+q$ is a row in the first column of the Young diagram) the content in row $(r+1)+q$ and column 1 is either greater than or equal to the content in row $q$ and column $\ell$ or else such a box is not in the Young diagram.

\section{Definitions and Lemmas}\label{definitionandlemma}

\subsection{Witten's Dictionary and classical Kostka applied to $\mathfrak{sl}_2$}\label{ckostka} 

To compute the rank of any $\mathfrak{sl}_2$ bundle with $s\leq0$, Witten's Dictionary \ref{witten} and the classical equation in \ref{kostka} gives

\begin{equation}\label{eq:crank}
\rk(\mathbb{V}(\mathfrak{sl}_2, \ell, (c_1\omega_1, ..., c_n \omega_1))) =  K^{(\ell+s) \omega_2}_{(c_1, ..., c_{n})},
\end{equation}

\noindent where $K^{(\ell+s) \omega_2}_{(c_1, ..., c_{n})}$ is the number of Young tableau with shape $(\ell+s) \omega_2$ and content $(c_1, ..., c_n)$. 

\subsection{Witten's Dictionary and quantum Kostka applied to $\mathfrak{sl}_2$}\label{qkostka} 
To compute the rank of any $\mathfrak{sl}_2$ bundle with $s>0$, Witten's Dictionary \ref{witten} and quantum equation in\ref{kostka} gives

\begin{equation}\label{eq:qrank}
\rk(\mathbb{V}(\mathfrak{sl}_2, \ell, (c_1\omega_1, ..., c_n \omega_1))) =  K^{\ell \omega_2}_{\ell \omega_1, (c_1, ..., c_n, \ell, ..., \ell), 1},
\end{equation}

\noindent where $K^{\ell \omega_2}_{\ell \omega_1, (c_1, ..., c_n, \ell^{s-1}), 1}$ is the number of proper Young tableau with shape $\nu[s]/\lambda$ with $\nu = \ell \omega_2$, $\lambda = \ell \omega_1$ (see Figure \ref{fig:nu}), and content $(c_1, ..., c_n, \ell^{s-1})$ (superscript to denote the number of content of size $\ell$).

\begin{figure}[h]
\setlength{\unitlength}{0.14in} % selecting unit length
\centering % used for centering Figure
$$\nu = \ell \omega_2 = 
\ytableausetup
{mathmode, boxsize=1em}
\begin{ytableau}
\text{} & \text{} & \text{} & \none &  \none[\dots]
& \none & \text{} & \text{} \\
\text{} & \text{} & \text{} & \none & \none[\dots]
& \none & \text{} & \text{} 
\end{ytableau} $$ 

$$\lambda = \ell \omega_1=
\ytableausetup
{mathmode, boxsize=1em}
\begin{ytableau}
\text{} & \text{}  & \text{}  &  \none  & \none[\dots]
& \none  & \text{} & \text{} 
\end{ytableau}, $$ 

\caption{Young diagrams in (\ref{eq:qrank})} % title of the Figure
\label{fig:nu} % label to refer figure in text
\end{figure}

To carry out such a computation, we must first consider the shape $\nu[s]/\lambda,$ which requires more analysis than in the classical case (\ref{eq:crank}). Recall $s$ refers to the number of $(\ell+2)$-rim hooks added to $\nu$. We give an example to motivate the general shape of $\nu[s]/\lambda$.\\

\textbf{Example:}\label{examplebigshape}
 Let $\ell = 4$, we construct the Young tableau $\nu[s]/\lambda$ for $s=5$ and $\nu$ and $\lambda$ as given above. Each of the five rim hooks added to $\nu$ is indicated with a shade. The shape $\lambda$ has been deleted from $\nu[5]$.
$$
\ytableausetup
{mathmode, boxsize=1em}
  \begin{ytableau}
\text{} & \text{} & \text{} & \text{}  \\
*(red) & *(red) &*(red) &*(red)  \\
*(red) & *(orange) &*(orange) &*(orange) \\
*(red) & *(orange)  &*(blue) &*(blue) \\
*(orange) & *(orange) &*(blue) &*(green) \\
*(blue) &*(blue) & *(blue) & *(green) \\
*(green) & *(green) &*(green) & *(green)\\
 *(purple) & *(purple) & *(purple) & *(purple) \\
  *(purple) \\
   *(purple) 
\end{ytableau} 
$$

We generalize this example to describe the shape of $\nu[s]/\lambda$ for any integer $s>0$ and $\ell >0$. 

\begin{lemma}\label{bigdiagramshape}
Let $\nu = \ell \omega_2$, $\lambda = \ell \omega_1$ and $s>0$. Let $p, k$ be integers such that $1 \leq p \leq \ell$ and $s = (k-1)\ell + p$ (the relevance of such integers will become apparent in \ref{numberyoungtab} when analyzing the area of weight vectors).  Then $$\nu[s]/\lambda=(\ell^{s+2k-1}, p, p).$$
\end{lemma}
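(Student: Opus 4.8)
The plan is to add the $s$ rim hooks to $\nu=\ell\omega_2=(\ell,\ell)$ one at a time, track the resulting partition by induction on the number added, and then specialize. Recall that for $\mathfrak{sl}_2$ one has $r+1=2$, so each rim hook occurring in the construction of $\nu[s]$ has size $r+1+\ell=\ell+2$ and begins in column $\ell$; the worked example above (the case $\ell=4$, $s=5$) is exactly the pattern to be formalized.

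Write $\nu^{(t)}$ for the partition obtained from $\nu$ by adding the first $t$ rim hooks, so $\nu^{(0)}=(\ell,\ell)$ and $\nu^{(s)}=\nu[s]$. The key claim, which I would prove by induction on $t$, is that
\[
\nu^{(t)}=\bigl(\ell^{\,a_t},\,q_t,\,q_t\bigr),\qquad
q_t:=t\bmod\ell\in\{0,\dots,\ell-1\},\qquad
a_t:=2+\Bigl\lfloor\tfrac{t}{\ell}\Bigr\rfloor(\ell+2)+q_t,
\]
where the two parts of size $q_t$ are omitted when $q_t=0$ (so that $\nu^{(t)}$ is then a genuine $a_t\times\ell$ rectangle). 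The base case $t=0$ gives $(a_0,q_0)=(2,0)$. For the inductive step, in $\nu^{(t)}=(\ell^{a_t},q,q)$ column $\ell$ is filled exactly in rows $1,\dots,a_t$, so the next rim hook has head $(a_t+1,\ell)$, and I would exhibit its $\ell+2$ cells explicitly in two cases. If $q\le\ell-2$: extend row $a_t+1$ from width $q$ to width $\ell$, then add the single cell $(a_t+2,q+1)$, then the cells $(a_t+3,1),\dots,(a_t+3,q+1)$; this is $(\ell-q)+1+(q+1)=\ell+2$ cells forming a connected border strip with head in column $\ell$ (its only cell in row $a_t+2$ is in one column, so no $2\times2$ block occurs), and the new shape is $(\ell^{a_t+1},q+1,q+1)$, i.e.\ $(a_{t+1},q_{t+1})=(a_t+1,q+1)$. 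If $q=\ell-1$: add $(a_t+1,\ell)$, then $(a_t+2,\ell)$, then $(a_t+3,1),\dots,(a_t+3,\ell)$; again $\ell+2$ cells, a rim hook with head in column $\ell$, and the new shape is the rectangle $(\ell^{a_t+3})$, i.e.\ $(a_{t+1},q_{t+1})=(a_t+3,0)$. In both cases one checks that $(a_{t+1},q_{t+1})$ is again given by the displayed formula (the second case being where $\lfloor t/\ell\rfloor$ increments), which completes the induction.

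To conclude, set $t=s$ and write $s=(k-1)\ell+p$ with $1\le p\le\ell$ (so $k\ge1$, since $s\ge1$). If $p\le\ell-1$ then $\lfloor s/\ell\rfloor=k-1$ and $q_s=p$, so $\nu[s]=(\ell^{\,2+(k-1)(\ell+2)+p},p,p)=(\ell^{\,s+2k},p,p)$ using the identity $2+(k-1)(\ell+2)+p=s+2k$. If $p=\ell$ then $s=k\ell$, so $\lfloor s/\ell\rfloor=k$, $q_s=0$, and $\nu[s]=(\ell^{\,2+k(\ell+2)})=(\ell^{\,s+2k+2})=(\ell^{\,s+2k},\ell,\ell)=(\ell^{\,s+2k},p,p)$. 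In either case $\nu[s]=(\ell^{\,s+2k},p,p)$; since $s+2k\ge3$, the partition $\lambda=\ell\omega_1=(\ell)$ is precisely its first row, and removing that row yields $\nu[s]/\lambda=(\ell^{\,s+2k-1},p,p)$, as claimed.

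I expect the one genuine obstacle to be the verification in the inductive step that the $\ell+2$ cells written down really form a rim hook---a connected border strip of the enlarged diagram---beginning in column $\ell$ in the sense used in \cite{Bertram}. This is careful bookkeeping along the rim rather than a conceptual difficulty, and it is guided entirely by the shapes of the individual rim hooks in the $\ell=4$ example above.
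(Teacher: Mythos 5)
Your proof is correct and follows essentially the same route as the paper: the paper's (much terser) argument also tracks how the shape grows as the rim hooks of size $\ell+2$ are added one after another, noting that each hook contributes one full row while lengthening the two short rows by one box, with the short rows closing up into full rows after every $\ell$ hooks. Your induction with the explicit formula for $(a_t,q_t)$ and the border-strip check is just a more careful formalization of that same bookkeeping.
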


\begin{proof} 
Following the construction of the Young tableau in Example \ref{examplebigshape}, we see that if we add $0 < s < \ell$ rim hooks, we add $s$ new rows with $\ell$ boxes in each row (full rows) and two rows, each with $s$ boxes in each row.  When we add $s = \ell $ rim hooks, we add $s=\ell$ full rows and two rows of size $\ell$, that is two additional full rows.  When $p=0$, this will close up the shape giving $(k-1)(\ell +2)+1$ full rows.\\
\end{proof}

\begin{figure}[h]
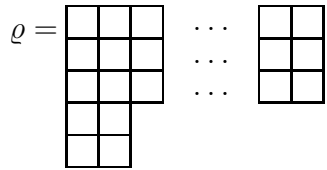

\setlength{\unitlength}{0.14in} % selecting unit length
\centering % used for centering Figure
$$\varrho =
\ytableausetup
{mathmode, boxsize=1em}
  \begin{ytableau}
\text{} & \text{} & \text{} &  \none &  \none[\dots]
&  \none & \text{} & \text{} \\
\text{} & \text{} & \text{} &  \none &  \none[\dots]
&  \none &  \text{} & \text{} \\
\text{} & \text{} & \text{} & \none &  \none[\dots]
&  \none &  \text{} & \text{} \\
\text{} & \text{} \\
\text{} & \text{}
\end{ytableau} 
$$
\caption{Young diagram $\varrho=(\ell^{2k}, p^2)$ for rank computations } % title of the Figure
\label{fig:varrho} % label to refer figure in text
\end{figure}

\subsection{Young tableaux that count ranks of $\mathfrak{sl}_2$ bundles}
\begin{lemma}\label{numberyoungtab}
For $\mathbb{V}_1$ as in Theorem \ref{mainIntro}, the rank of $\mathbb{V}_1$ is equal to the number of proper Young tableau with content $\mu = (c_1, ..., c_n)$ on the shape $(\ell^{2k}, p^2)$ (see Figure \ref{fig:varrho}). We denote this shape $\varrho$. It is a vertical concatenation of two rectangular shapes, one of dimension $\ell \times (2k) $ and one of dimension $p \times 2$. 
\end{lemma}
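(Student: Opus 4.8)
The plan is to unwind the definitions in \eqref{eq:qrank} and Lemma \ref{bigdiagramshape} and show that the "proper" tableaux on $\nu[s]/\lambda$ counting $\operatorname{rank}(\mathbb{V}_1)$ are in content-preserving bijection with arbitrary semistandard tableaux on the smaller shape $\varrho=(\ell^{2k},p^2)$. First I would fix notation consistent with Theorem \ref{mainIntro}: write $\sum_{i=1}^n c_i = 2(k\ell+p)$ with $1\le p\le \ell$, so that $|\vec\lambda| = 2(\ell+s)$ with $s = (k-1)\ell + p$, and hence $s>0$ exactly when $k\ge 1$ (the case $k=0$ forces $s\le 0$ and is handled by the classical formula \eqref{eq:crank}, where the shape $(\ell+s)\omega_2 = p\,\omega_2 = (p,p)=(\ell^0,p^2)$ already has the asserted form, so I would dispose of it first as a base case). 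For $s>0$, Lemma \ref{bigdiagramshape} gives $\nu[s]/\lambda = (\ell^{s+2k-1},p,p)$, and by \eqref{eq:qrank} the rank is the number of proper semistandard tableaux on this shape with content $(c_1,\dots,c_n,\ell^{s-1})$.

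The key step is to account for the forced entries in the top of $\nu[s]/\lambda$. The shape has $s+2k-1 = (k-1)\ell + p + 2k - 1$ full rows of length $\ell$ followed by two rows of length $p$. I would argue that the "proper" condition — which relates the entry in row $(r{+}1){+}q = 2+q$ column $1$ to the entry in row $q$ column $\ell$ — together with the column-strict/row-weak semistandard conditions, forces the first $s-1$ of the full rows to be filled, up to the bijection, with the $s-1$ blocks of content of size $\ell$ that were appended; more precisely, I expect that in any such tableau exactly $s-1$ of the $\ell$-length rows must be "constant rows" consuming the appended $\ell^{s-1}$ content, and that stripping these rows off establishes a bijection with semistandard fillings of the remaining shape, which is $(\ell^{(s+2k-1)-(s-1)},p,p) = (\ell^{2k},p,p) = \varrho$, by content $\mu=(c_1,\dots,c_n)$. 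I would also observe that once the shape is reduced to $\varrho$, the properness condition becomes vacuous: $\varrho$ has only $2k \le r+1+\ell$-irrelevant rows of length $\ell$ and the bottom two rows have length $p\le \ell$, so there is no row $q$ with a box in column $\ell$ lying above a row $2+q$ that still has a box in column $1$ after the reduction — hence ordinary semistandardness suffices, matching the statement of the lemma. The final sentence of the lemma, that $\varrho$ is the vertical concatenation of an $\ell\times 2k$ rectangle and a $p\times 2$ rectangle, is then immediate from the partition $(\ell^{2k},p^2)$ and $p\le\ell$.

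The main obstacle is making the "stripping off $s-1$ forced constant rows" argument rigorous: I need to show both that such rows necessarily appear (using that there are only $n$ flavors available for $(c_1,\dots,c_n)$ but $s-1$ extra size-$\ell$ blocks, and that column-strictness across $\ell$-wide rows forces large runs) and that the map (delete those rows, relabel the flavors of the appended content away) is a well-defined bijection preserving the proper-tableau condition in both directions. I would likely isolate this as the technical heart of the proof, perhaps via an auxiliary lemma identifying where the appended $\ell$'s can sit, and then check the properness bookkeeping on the reduced shape $\varrho$ carefully; the rest is routine bookkeeping with Lemma \ref{bigdiagramshape} and the definitions of content and properness recalled in Section \ref{kostka}.
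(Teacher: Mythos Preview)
Your proposal has the right overall shape but contains one genuine error and one unnecessary complication.

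The error is your claim that properness becomes vacuous on $\varrho=(\ell^{2k},p^2)$. It does not. For $\mathfrak{sl}_2$ the proper condition compares row $q+2$, column $1$ with row $q$, column $\ell$. On $\varrho$, every $q\in\{1,\dots,2k\}$ gives a box in row $q$, column $\ell$ \emph{and} a box in row $q+2$, column $1$ (since $\varrho$ has $2k+2$ rows). So the condition is live. For instance, with $\ell=3$, $k=1$, $p=1$ the filling with rows $(1,2,4)$, $(2,3,5)$, $(3)$, $(4)$ is semistandard but not proper, since the $(3,1)$ entry $3$ is less than the $(1,3)$ entry $4$. The lemma statement says ``proper Young tableau'' on $\varrho$, and the very next remark in the paper (Remark \ref{proper}) spells out the nontrivial residual proper condition; your conclusion ``ordinary semistandardness suffices'' is incorrect and would overcount.

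The complication is that you keep the appended $\ell$'s as the \emph{largest} flavors $(c_1,\dots,c_n,\ell^{s-1})$ and then try to prove a bijection stripping off $s-1$ constant rows. This is what forces your ``main obstacle'' and the auxiliary lemma you mention. The paper sidesteps it entirely: by Remark \ref{invariantorder} the rank is insensitive to the ordering of the weights, so one may take the content to be $(\ell^{s-1},c_1,\dots,c_n)$ with the extra $\ell$'s carrying the \emph{smallest} flavors $1,\dots,s-1$. Then column-strictness alone forces flavor $1$ to occupy all of row $1$, flavor $2$ all of row $2$, etc., through row $s-1$; nothing needs to be proved about where the $\ell$'s ``can sit.'' What remains is exactly $\varrho$ with content $(c_1,\dots,c_n)$ (after the obvious relabeling), and the proper condition on the surviving rows is inherited, not discarded. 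Your approach can be salvaged, but you should adopt the reordering trick and drop the vacuousness claim.
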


\begin{proof}
For $s \leq 0$ this is a restatement of the description in \ref{eq:crank}.

For $s>0$, we use the description in \ref{eq:qrank} and Remark \ref{invariantorder}. We must analyze the number of proper Young tableau on the diagram $\nu[s]/\lambda$ from Lemma \ref{bigdiagramshape} and content $(\ell^{s-1}, c_1, ..., c_n)$. By reordering the content values, we can make the content of smallest flavor (from 1 to s-1) each have size $\ell$. Since rows must be in strictly decreasing in order, we must fill in the first $s-1$ rows $\nu[s]/\lambda$ with the first $s-1$ content flavors of size $\ell$. The remaining boxes (which must be filled with content $(c_1, ..., c_n)$) now has shape $\varrho$ described in the lemma statement.

Thus, the rank computation from Equation \ref{eq:crank} and \ref{eq:qrank} has been reduced to counting the number of proper Young tableau on $\varrho$ as claimed in the lemma. 
\end{proof}

\begin{remark}\label{proper}
With any Young diagram as $\varrho$ described in this lemma, the proper condition is equivalent to the flavor in the first column and $p$ row to be great than or equal to the content in the final column and $p-2$ row. This means that for a given tableau, if any flavor $i$ is contained only in \textit{at most} two consecutive rows, the tableau will be proper. \\
\end{remark}
\subsection{Methods for filling Young diagrams with content}
We now describe two methods of placing content in a Young diagram to produce. We utilize both methods to construction Young tableaux with shape $\varrho$ in the proof of Theorem \ref{mainIntro} to show when $\rk(\mathbb{V}_m) >0$.

Let $\lambda$ denote a partition which fits inside an $r \times \ell$ box. We denote $B_{(a,b)}$ the box of $\lambda$ in row $a$ and column $b$. We consider boxes in a diagram to have \textit{lexicographical} ordering with row and column. That is, $B_{(a,b)} \leq B_{(a', b')}$ if and only if $a < a'$ or else ($a=a'$ and $b \leq b'$). We refer to the boxes of $\lambda$ as being \text{higher} or \text{lower} if they are visually displayed in that manner and \text{larger} or \text{smaller} if we are referring to this ordering. 

\begin{definition}\label{lowrow}
Define the \emph{low-row} of a diagram $\lambda$ to be all of the boxes of $\lambda$, $B_{(a,b)}$, such that $B_{(a+1, b)}$ is \textit{not} a box of $\lambda$. We use $l_{\lambda} = (l_{\lambda}^{(1)}, ..., l_{\lambda}^{(k)})$ to denote the sizes of the low-row of $\lambda$. That is, $l_{\lambda}^{(j)}$ is the size of the $j^{th}$ low-row of $\lambda$.
\end{definition}

 With this definition we require $l_{\lambda}^{(1)} \neq 0$; this convention considers the highest (vertical) row of a low-row as the first row.  We omit the subscript $\lambda$ if the diagram to which we are referring to is clear. Note that $\sum_{i=1}^r l_{\lambda}^{(i)} = \max_{i\in \{1, ..., r\}} \{ \lambda^{(i)}\}$ and $k \leq r$. That is, the sum of all boxes in the low-row of $\lambda$ will be equal to the longest row of $\lambda$ and the number of rows contained in the low-row cannot be larger than the total number of rows of $\lambda$.
\begin{example}
For the Young diagram $\lambda = (4, 4, 3, 3)$, the low-row is the collection of boxes $(1, 0, 3)$ shaded in the figure.  

$$
\ytableausetup
{mathmode, boxsize=1em}
  \begin{ytableau}
\text{} & \text{} & \text{} & \text{}\\
\text{} & \text{} & \text{} & *(yellow) \text{}\\
\text{} & \text{} & \text{}\\
*(yellow)\text{} & *(yellow)\text{} & *(yellow) \text{}\\
\end{ytableau} 
$$
\end{example} 

With a partial filling of a Young diagram $\lambda$, we define $\lambda(i)$ to be the Young diagram obtained from $\lambda$ by removing all boxes which contain content of flavors $i$ or larger \footnote{We again refer the reader to \cite{Bertram} for more background definitions related to Young diagram and tableau}. For $\lambda$ in $r \times \ell$ and content $\mu = (\mu_1, ..., \mu_n)$ such that $\sum_{i=1}^n \mu_i = |\lambda|$ we describe two algorithms for placing flavors $\mu$ in the boxes of $\lambda$.  

\begin{algorithm}
\caption{Forward fill method}
\label{forward}
From $i =1$ to $n$ (in increasing order) place all $\mu_i$ content of flavor $i$ in $\lambda$ in the smallest empty boxes of $\lambda$ following the lexicographical ordering of the boxes of $\lambda$.
\end{algorithm}

\begin{example}\label{forward}
Let $\lambda= (7, 7, 7, 7, 5,5)$ and $\mu =(7, 6, 6, 6, 6, 6,1)$ the following is the result of the forward fill method for placing $\mu$ in $\lambda$.

$$
\ytableausetup
{mathmode, boxsize=1em}
  \begin{ytableau}
\text{1} & \text{1} & \text{1} & \text{1} & \text{1} & \text{1} & \text{1}\\
\text{2} & \text{2} & \text{2} & \text{2} & \text{2} & \text{2}  & \text{3} \\
\text{3} & \text{3} & \text{3} & \text{3} & \text{3}  & \text{4}& \text{4} \\
\text{4} & \text{4} & \text{4} & \text{4} & \text{5}  & \text{5}& \text{5} \\
\text{5} & \text{5} & \text{5} & \text{6} & \text{6}\\
\text{6} & \text{6} & \text{6} & \text{6} & \text{7}
\end{ytableau} 
$$

\end{example}

\begin{algorithm}
\caption{Reverse fill method}
\label{reverse}
From $i =n$ to $1$ (in decreasing order) place all content $\mu_i$ of flavor $i$ in the largest boxes of the low-row of $\lambda(i+1)$ (the ordering on boxes in a low row is inherited by the ordering of the boxes of $\lambda$).
\end{algorithm}

\begin{example}\label{reverse}
Let $\lambda= (7, 7, 7, 7, 5,5)$ and $\mu =(7, 6, 6, 6, 6, 6,1)$ the following is the result of the Reverse Fill method for placing $\mu$ in $\lambda$.

$$
\ytableausetup
{mathmode, boxsize=1em}
  \begin{ytableau}
\text{1} & \text{1} & \text{1} & \text{1} & \text{1} & \text{1} & \text{1}\\
\text{2} & \text{2} & \text{2} & \text{2} & \text{2} & \text{2}  & \text{3} \\
\text{3} & \text{3} & \text{3} & \text{3} & \text{3}  & \text{4}& \text{4} \\
\text{4} & \text{4} & \text{4} & \text{4} & \text{5}  & \text{5}& \text{6} \\
\text{5} & \text{5} & \text{5} & \text{5} & \text{6}\\
\text{6} & \text{6} & \text{6} & \text{6} & \text{7}
\end{ytableau} 
$$
\end{example}

\subsection{Notation and definitions for describing the decomposition of $\mathfrak{sl}_{2m}$ divisors}

The following weight vector and vector bundle is used in Theorem \ref{divisors}.

\begin{definition}\label{vweightvector}
For $\mathbb{V}_m=\mathbb{V}(\mathfrak{sl}_{2m},(c_1\omega_m, ..., c_n\omega_m), \ell)$ and $k, p$ given as in Theorem \ref{mainIntro}, with $A, B \subset \{0, ..., n\}$ we define the weight vector and vector bundle, $$\vec{v}_{A,B} := (v_1\omega_m, ..., v_n \omega_m) \text{ and }$$  $$V_{a,b}:=\mathbb{V}(\mathfrak{sl}_{2m}, \vec{v}_{a,b}, 1)$$ where $v_i= 1$ if $i \in \{1, ..., 2k+1\} -A \cup B$ and $v_i = 0$ otherwise.
\end{definition}

As an example of this definition, for $n=9$, $k=2$, and $m = 2$, then the weight vector with $9$ weights $(\omega_2, \omega_2, 0, \omega_2, \omega_2, 0 , 0, \omega_2, 0)$ can be written as $\vec{v}_{3, 8}$.

\section{Rank classification of $\mathfrak{sl}_2$ conformal blocks}\label{rank}

We prove Theorem \ref{mainIntro}. First, for the case $m=1$ we give counting arguments showing the sufficient conditions of the size of the ranks in each of the cases for $\Lambda := \sum_{2k+2}^n c_i$. In this case, and when $\Lambda \geq p$, we produce a Young tableau on the Young diagram $\varrho$ (see Figure \ref{fig:varrho}). We show that this tableau can be modified in a simple way to produce a new tableau whenever $\Lambda > p$ and our content is not maximal. Applying Lemma \ref{numberyoungtab} we conclude the theorem for $m=1$.  By scaling, Proposition \ref{bgkscaling}, we conclude Theorem \ref{mainIntro}.   

\subsection{Counting arguments to analyze ranks}\label{counting}

Let $\mathbb{V}_1$ and $\Lambda$ as in Theorem \ref{mainIntro} and $\varrho$ and $\mu$ be a Young diagram and content as in Lemma \ref{numberyoungtab}. We give sufficient conditions for the ranks of the families described in Theorem \ref{mainIntro}.

%------ Sum less than p
\begin{lemma}\label{sumlessp} 
If $\Lambda < p$, then $\rk(\mathbb{V}_1) = 0$.
\end{lemma}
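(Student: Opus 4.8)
The statement is that if $\Lambda := \sum_{i=2k+2}^n c_i < p$, then $\rk(\mathbb{V}_1)=0$, and by Lemma \ref{numberyoungtab} this amounts to showing there are no proper Young tableaux on $\varrho = (\ell^{2k}, p^2)$ with content $\mu = (c_1,\ldots,c_n)$. My plan is to argue that the columns-strictly-increasing condition alone is already violated: the last two rows of $\varrho$ (the two rows of length $p$) require $2p$ boxes, and in each of the $p$ columns of this bottom block the two entries must be distinct flavors, one strictly below the other. So the bottom block $p^2$ must be filled using flavors whose boxes can reach that low. First I would record what content is ``available'' for those two rows.

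**Key steps.** (1) Since $c_1 \geq c_2 \geq \cdots \geq c_n$ and $\sum c_i = 2(k\ell+p)$, observe that the flavors $1, \ldots, 2k+1$ together carry area $2(k\ell+p) - \Lambda > 2k\ell + p$, hence the first $2k+1$ flavors alone already overflow the top block (which has exactly $2k\ell$ boxes); more precisely the first $2k$ rows of $\varrho$ — the $\ell\times 2k$ block — can absorb at most $2k\ell$ boxes, so at least $p+1$ boxes in the bottom $p^2$ block must be occupied, and by how content-filling works (rows weakly increasing, columns strictly increasing) the entries in the bottom block are the \emph{largest} flavors used. (2) Count how many boxes of the bottom block the small flavors are forced to occupy: because $c_1,\ldots,c_{2k+1}$ sum to more than $2k\ell$, when we fill greedily these flavors spill into the bottom block, occupying at least $2(k\ell+p) - \Lambda - 2k\ell = 2p - \Lambda$ boxes there. (3) The remaining boxes of the bottom block, of which there are at most $2p - (2p-\Lambda) = \Lambda$, must be filled by flavors $2k+2, \ldots, n$, whose total amount is exactly $\Lambda < p$. (4) Derive the contradiction from the strictly-increasing-columns requirement: in the bottom $p^2$ block every column needs two distinct flavors with the smaller on top; combined with the weakly-increasing-rows condition this forces the configuration in the two $p$-rows to be, column by column, a strictly increasing pair, and a short counting argument (of the type: the multiset of flavors appearing in the bottom two rows has size $2p$, but among these at most one ``copy'' per flavor can sit in a given column, and the total distinct-flavor budget reaching that depth is too small) shows $\Lambda < p$ makes it impossible to legally fill column $p$ — there simply are not enough large-enough flavors to occupy both the $(2k+1, p)$ and $(2k+2, p)$ boxes while respecting strict column increase.

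**Alternative cleaner route.** Rather than the greedy-fill accounting, I would probably instead argue directly via the ``low-row'' / reverse-fill description: by Algorithm \ref{reverse}, flavor $n$ fills the largest boxes of the low-row of $\varrho$, then flavor $n-1$ the next, etc. Reading off Remark \ref{proper}, for a tableau on $\varrho$ to even be \emph{semistandard} one needs each column of the bottom $p\times 2$ block to receive two different flavors. The flavors that can legally appear in the \emph{bottom} row (row $2k+2$) are exactly flavors $2k+2,\ldots,n$ together with possibly flavor $2k+1$ if it overflows — but flavor $2k+1$ occupies a contiguous block and, because $c_{2k+1}\le c_{2k}\le\cdots$, after placing $c_1,\ldots,c_{2k}$ the flavor $2k+1$ starts in row $2k+1$ at the earliest; its $c_{2k+1}$ boxes fill at most the rest of row $2k+1$ and the start of row $2k+2$. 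What is then left for the last row is covered by flavors $2k+2,\ldots,n$, total $\Lambda$, which is $< p$, so the last row of $\varrho$ cannot be completely filled — contradiction. I would make the ``earliest row'' claim precise by an inductive statement: after placing flavors $1,\ldots,j$ greedily, the filled region is a partition all of whose rows have length $\ge$ something, giving that flavor $2k+1$ cannot begin before row $2k+1$.

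**Main obstacle.** The delicate point is bookkeeping the interaction between the overflow of the small flavors $1,\ldots,2k+1$ and what remains for the bottom row; in particular I must rule out the possibility that the small flavors themselves fill the bottom row legally (they cannot, by strict column increase: flavor $2k+1$ cannot appear twice in one column), and simultaneously handle the boundary/parity cases where $2k+1 > n$ or $2k+2 > n$ (so $\Lambda = 0$) and where $p = \ell$. I expect the cleanest writeup is the greedy/forward-fill one: show forward-filling $\mu$ into $\varrho$ produces the \emph{unique candidate} for a semistandard (never mind proper) tableau up to the obvious ambiguities, and that this candidate leaves the last row short by $p - \Lambda > 0$ boxes, so no tableau exists at all.
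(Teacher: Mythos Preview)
Your core idea---the bottom row of $\varrho$ cannot be filled because there is not enough ``large'' content---is exactly the paper's approach, but your writeup obscures the one-line fact that makes it work: in \emph{any} semistandard tableau, the entry in row $r$ of any column has flavor at least $r$ (since the column entries above it are $r-1$ strictly smaller positive integers). Applied to $\varrho=(\ell^{2k},p^2)$, every box in row $2k+2$ must carry a flavor from $\{2k+2,\ldots,n\}$; these flavors have total amount $\Lambda<p$, while row $2k+2$ has $p$ boxes. Done. The paper phrases this as: after placing all content of flavors $\ge 2k+2$, some column of length $2k+2$ remains empty in its bottom box, and $2k+1$ distinct flavors cannot fill a strictly increasing column of length $2k+2$.

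Your detours introduce genuine confusion. In the ``alternative route'' you allow flavor $2k+1$ to spill into row $2k+2$ under forward-fill and then try to argue the last row is ``short''; but forward-fill places all $|\varrho|$ boxes of content, so nothing is left short---the failure is that the result is not semistandard, and you never close that loop. More seriously, the claim in your final paragraph that forward-fill produces ``the unique candidate'' SSYT is false in general, so that strategy cannot establish non-existence. Drop the greedy-fill bookkeeping entirely and just invoke the row-$r$-has-flavor-$\ge r$ fact; then (1)--(4) and the ``main obstacle'' all collapse to two sentences.
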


\begin{proof}
Applying Lemma \ref{numberyoungtab}, we show there are no possible Young tableau with shape $\varrho$ and content $\mu$.

In order to produce a Young tableau, the largest content flavors must be placed in rows with larger boxes than smaller flavors. Since $c_1 \geq ... \geq c_n$ and $\Lambda < p$, we must fill the final $2k+2$ row (which contains $p$ boxes) with the largest flavors $(c_{2k+2}, ..., c_n)$. However, since the sum of such amounts is less than $p$, we will not entirely fill this last row with this content. After placing the content $(c_{2k+2}, ..., c_n)$ the remaining collection of empty boxes creates a shape with a column of length $2k+2$. Such a column cannot be filled with the remaining content, $(c_1, ..., c_{2k+2})$, in strictly decreasing order. Thus, no such Young tableau exists.
\end{proof}

%------ Sum equal to p

\begin{lemma}\label{sumequalp}
If $\Lambda = p$, then $\rk(\mathbb{V}_1) = 1$.
\end{lemma}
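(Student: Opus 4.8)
The plan is to show that when $\Lambda = p$ there is exactly one proper Young tableau on the shape $\varrho = (\ell^{2k}, p^2)$ with content $\mu = (c_1, \ldots, c_n)$, and then invoke Lemma~\ref{numberyoungtab}. First I would observe that, by the hypothesis $c_1 \geq \cdots \geq c_n$ together with $\sum_{i=1}^n c_i = 2(k\ell + p)$, the total area of the bottom two rows of $\varrho$ is $2p$, while $\Lambda = \sum_{i=2k+2}^n c_i = p$. Since the columns must strictly increase and the rows weakly increase, the content occupying the two bottom rows (rows $2k+1$ and $2k+2$) must consist of the \emph{largest} flavors, i.e. the flavors $2k+2, \ldots, n$ placed in row $2k+2$ and a matching copy structure in row $2k+1$; more precisely, I would argue that the only way to fill a $p\times 2$ rectangle (stacked below $2k$ full rows of width $\ell \geq p$) with strictly increasing columns is forced once the multiset of flavors in those rows is determined.

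The key steps, in order: (1) Show the last row (row $2k+2$, width $p$) must be filled by flavors $2k+2, \ldots, n$ using all of $c_{2k+2}, \ldots, c_n$ — this is forced because the amounts sum to exactly $p$ and no larger-indexed flavor can appear higher than a smaller-indexed one in the same column while columns strictly increase, so pushing the biggest flavors to the bottom is the only option (essentially the argument dual to Lemma~\ref{sumlessp}, where now equality makes the fit exact rather than impossible). (2) Show the remaining shape — the $2k$ full rows of width $\ell$ together with row $2k+1$ of width $p$ — must be filled with flavors $1, \ldots, 2k+1$, and that this filling is \emph{unique}: since there are $2k\ell + p$ boxes left and exactly that many units of content among $c_1, \ldots, c_{2k+1}$, and since each flavor $i \leq 2k$ with amount $\le \ell$ must occupy a contiguous weakly-increasing run while columns strictly increase, a simple induction from the top (or an appeal to the forward-fill method, Algorithm~\ref{forward}) shows every box's value is determined. (3) Verify this unique tableau is \emph{proper}: by Remark~\ref{proper}, properness for $\varrho$ reduces to comparing the flavor in column $1$, row $p$ of the bottom block against the flavor in the last column, row $p-2$; since the bottom two rows only contain flavors $\geq 2k+2$ and these occupy at most two consecutive rows each (as $c_i \le \ell$ forces short runs, and here the bottom block is only two rows tall anyway), the proper condition holds automatically — this is exactly the observation in Remark~\ref{proper} that a flavor confined to at most two consecutive rows never violates properness.

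The main obstacle I anticipate is step~(2): carefully proving \emph{uniqueness} of the filling of the $2k$ full rows plus the width-$p$ row $2k+1$. One must rule out the possibility of "shuffling" a flavor between row $2k$ and row $2k+1$, or between adjacent full rows, in a way that preserves the weakly-increasing-rows / strictly-increasing-columns conditions. I expect the cleanest route is to argue column-by-column: in each of the first $p$ columns, the entries form a strictly increasing sequence of length $2k+1$ drawn from $\{1, \ldots, 2k+1\}$, hence must be exactly $1, 2, \ldots, 2k+1$ in order; in columns $p+1, \ldots, \ell$ the entries form a strictly increasing length-$2k$ sequence from $\{1, \ldots, 2k\}$ (the extra flavors can't reach these columns because row $2k+1$ only has $p$ boxes and is already saturated), hence must be $1, \ldots, 2k$. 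This pins down every box and simultaneously re-derives that the amounts $c_1, \ldots, c_{2k}$ are forced to partition correctly — but one should double-check the edge case where $c_{2k+1}$ (or even some $c_i$ with $i \le 2k$) could be strictly less than $\ell$, ensuring the counting $\sum_{i=1}^{2k+1} c_i = 2k\ell + p$ still forces the filling rather than merely permitting it. With step~(2) in hand, Lemma~\ref{numberyoungtab} gives $\rk(\mathbb{V}_1) = 1$.
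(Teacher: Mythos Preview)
Your overall strategy matches the paper's: reduce to counting proper tableaux on $\varrho = (\ell^{2k}, p^2)$ via Lemma~\ref{numberyoungtab}, force the bottom row to be filled exactly by flavors $2k+2, \ldots, n$ (step~(1)), then argue uniqueness of the remaining filling and check properness. Steps~(1) and~(3) are essentially the paper's argument.

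The gap is in step~(2). Your column-by-column claim that the entries in columns $p+1, \ldots, \ell$ form a strictly increasing sequence of length $2k$ drawn from $\{1, \ldots, 2k\}$ is false: these columns draw from $\{1, \ldots, 2k+1\}$, and a length-$2k$ increasing sequence from a set of $2k+1$ elements is \emph{not} unique. Example~\ref{sumpexample} ($\ell=6$, $k=2$, $p=3$, content $(6,6,5,5,5,2,1)$) already exhibits this: columns $5$ and $6$ of the unique tableau contain the flavor $5 = 2k+1$. Your parenthetical justification (``row $2k+1$ only has $p$ boxes and is already saturated'') rules out flavors $\geq 2k+2$ from those columns but says nothing about flavor $2k+1$. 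What you flag as an ``edge case'' --- some $c_i < \ell$ for $i \leq 2k+1$ --- is in fact the generic situation and is exactly where your argument breaks.

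The paper handles this more carefully. It first proves $c_i \geq p$ for all $i \leq 2k+1$ (a short counting step you omit), which forces the first $p$ columns to be $1,2,\ldots,2k+1$ uniquely. The remaining shape is then the rectangle $((\ell-p)^{2k})$ with residual content $(c_1 - p, \ldots, c_{2k+1} - p)$, each entry at most $\ell - p$. Uniqueness on this rectangle is argued not column-by-column but \emph{row-by-row}: one shows that in any valid filling row $i$ can contain only flavors $i$ and $i+1$, and then that no swap between adjacent rows preserves this property. You will need an argument of this shape; the naive column pigeonhole does not close the gap.
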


\begin{proof}
Applying Lemma \ref{numberyoungtab}, we show there is only one possible construction of a Young tableau with shape $\varrho$ and content $\mu$.

The content flavors in the sum $\Lambda= p$ are the largest in flavor and contain all but $2k+1$ content flavors. As we discussed in Lemma \ref{sumlessp}, no Young tableau can be produced on $\varrho(2k+2)$ if this shape contains a column of length $2k+2$. This forces the content $(c_{2k+1}, ..., c_n)$ to be placed in the $2k+2$ row of $\varrho$ and since $\Lambda=p$, $\varrho(2k+2)$ will have no column of length $2k+2$. After placing content $(c_{2k+1}, ..., c_n)$ in this way, the remaining empty boxes creates the shape $\varrho(2k+2) = (\ell^{2k}, p)$. 

To finish constructing a Young tableau each flavor in the remaining content $(c_1, ..., c_{2k+1})$ must be present in the first $p$ columns of length $2k+1$ of $\varrho(2k+1)$. We show this is indeed possible by showing $c_i \geq p$ for $i=1$ to $2k+1$.

Since we have $\sum_{i=1}^{n} c_i = \sum_{i=1}^{2k+1} c_i + \sum_{i=2k+2}^{n} c_i = \sum_{i=1}^{2k+1} c_i  +p$ and $\sum_{i=1}^{n} c_i =2k\ell +2p$, we have that $\sum_{i=1}^{2k+1} c_i  = 2k\ell +p$. If we assume for some $i$ in this sum that $c_i < p$, we obtain the following bound on this sum (and using that each $c_i \leq \ell$), $2k\ell +p = \sum_{i=1}^{2k+1} c_i  < (2k)\ell + p,$ which contradicts the first equality. 

To continue constructing a Young tableau, we fill the first $p$ columns of $\varrho(2k+1)$ with content $(c_1, ..., c_{2k+1})$. There is only one way to place such $2k+1$ flavors in strictly decreasing order into columns of length $2k+1$. After this placement, the empty boxes remaining form the shape $\tilde{\varrho}(2k+2):= ((\ell-p)^{2k})$ and the remaining content becomes $(c_1-p, ..., c_{2k+1}-p)$.

%$$
%\ytableausetup
%{mathmode, boxsize=3em}
%  \begin{ytableau}
%\text{1} & \text{1} & \text{1} & \text{} & \none[\dots]
%& \text{} & \text{} \\
%\text{2} & \text{2} & \text{2} & \text{} & \none[\dots]
%& \text{} & \text{} \\
%\text{...} & \text{...} & \text{...} & \text{} & \none[\dots]
%& \text{} & \text{} \\
%\text{2k} & \text{2k} & \text{2k} & \text{} & \none[\dots]
%& \text{} & \text{} \\
%\text{2k+1} & \text{2k+1} & \text{2k+1} \\
%\text{2k+2} & \text{...} & \text{n} 
%\end{ytableau} 
%$$

%We now have only to fill the remaining empty boxes with the remaining content $(c_1-p, ..., c_{2k+1}-p)$. The shape containing the empty boxes is as follows,
%
%$$
%\ytableausetup
%{mathmode, boxsize=1em}
%  \begin{ytableau}
% \text{} & \none & \none[\dots]
%& \none &\text{} & \text{} \\
% \text{} & \none &\none[\dots]
%& \none &\text{} & \text{} \\
% \text{} & \none &\none[\dots]
%& \none &\text{} & \text{} \\
% \text{} & \none & \none[\dots]
%& \none &\text{} & \text{}
%\end{ytableau} 
%$$
%
%\noindent a rectangular shape of dimension $2k \times (\ell-p) $. \\

To continue constructing a proper Young tableau, we show that we must place the remaining content in $\tilde{\varrho}(2k+1)$ using the forward fill method \ref{forward}. We explain that this method for filling the remaining boxes is the only way of doing so in order to create a Young tableau.

For all $i$, $c_i \leq \ell$, which gives $c_i -p\leq \ell-p$ and so the amount of each remaining content flavor is less than the length of the rows $\tilde{\varrho}(2k+1)$ . We will never be in the case of having two consecutive rows with the same content flavor and so the columns will be strictly increasing and the rows will be weakly increasing. A tableau with content has been created.

We argue that in this last step, this was the only way to fill $\tilde{\varrho}(2k+1)$ with content $(c_1-p, ..., c_{2k+1}-p)$. The content amounts are weakly decreasing and each amount is smaller than the length of the rows of $\tilde{\varrho}(2k+1)$. This guarantees that any Young tableau with shape $\tilde{\varrho}(2k+1)$ and content $(c_1-p, ..., c_{2k+1}-p)$, has the additional property that the only possible content flavors in the $i^{th}$ row are $i$ and $i+1$. The only way to modify the content placed in the described method (and maintain the weakly increasing property of the rows) would be to switch two flavors of content in consecutive rows. To do so would require switching content of flavor $i$ in row $i+1$ with content of flavor $i-1$ in row $i$. This would not maintain the necessary property just described. 

In each step of the above construction, there was only one way to place content. The Young tableau produced is the only such which can be constructed on $\varrho$ with content $\mu$. 

This tableau is proper due to Remark \ref{proper} and the property that content in consecutive rows of $\tilde{\varrho}(2k+1)$ is as described above.
\end{proof}

\begin{example}\label{sumpexample}
For the vector bundle $\mathbb{V}(\mathfrak{sl}_2, (6,6,5,5,5,2,1), 6),$ we have $k=2$, $p=3$, and $\Lambda= \sum_{i=6}^7 c_1 =3=p$. The following is the only Young tableau which can be produced on the shape $\varrho = (6, 6, 6, 6, 3, 3)$ with content $\mu=(6,6,5,5,5,2,1)$. This shows $\rk(\mathbb{V})=1$. The content in the sum $\Lambda$ is shaded.

$$
\ytableausetup
{mathmode, boxsize=1em}
  \begin{ytableau}
\text{1} & \text{1} & \text{1} & \text{1} 
& \text{1} & \text{1} \\
\text{2} & \text{2} & \text{2} &\text{2} 
& \text{2} & \text{2} \\
\text{3} & \text{3} & \text{3} & \text{3} 
& \text{3} & \text{4}\\
\text{4} & \text{4} & \text{4} & \text{4} 
& \text{5} & \text{5}\\
\text{5} & \text{5} & \text{5}\\
*(yellow) \text{6} & *(yellow)\text{6} &*(yellow) \text{7}
\end{ytableau} .
$$

\end{example}

%---- Sum greater than p and max content
\begin{lemma}\label{case2leftarrow}
If $\Lambda  > p$ and weight content is maximal (Definition \ref{MaxContent}), then $\rk(\mathbb{V}_1) =1$. 
\end{lemma}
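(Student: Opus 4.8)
The plan is to show that under the hypotheses $\Lambda > p$ and maximal weight content, there is exactly one proper Young tableau of shape $\varrho = (\ell^{2k}, p^2)$ with content $\mu = (c_1,\dots,c_n)$, and then invoke Lemma~\ref{numberyoungtab}. By Remark~\ref{MaxConentRemark}, maximality forces $2k+2 = n$ and the large flavors are pinned to size $\ell$: concretely $c_i = \ell$ for $i = 1,\dots,2k$ when $n$ is odd, and $c_i = \ell$ for $i = 1,\dots,2k-1$ when $n$ is even. So the content has at most three "free" flavors, and all the slack in the diagram is concentrated in the last two or three flavors. I would split into these two parity cases.

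**First**, the odd case ($n = 2k+2$, $c_1 = \dots = c_{2k} = \ell$): the only non-$\ell$ amounts are $c_{2k+1}$ and $c_{2k+2}$, with $c_{2k+1} + c_{2k+2} = 2p$ (since $\sum c_i = 2k\ell + 2p$) and $c_{2k+1} \geq c_{2k+2}$, and $\Lambda = c_{2k+2} > p$ is impossible here — wait, $\Lambda = \sum_{i=2k+2}^n c_i = c_{2k+2}$, and $\Lambda > p$ with $c_{2k+1} + c_{2k+2} = 2p$ and $c_{2k+1} \geq c_{2k+2}$ forces $c_{2k+2} = c_{2k+1} = p$... so in fact the odd case with strict inequality cannot occur, and maximality with $\Lambda > p$ lives in the even case. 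I would state this observation cleanly first: \emph{maximal content with $\Lambda > p$ forces $n$ even}, $n = 2k+2$, $c_i = \ell$ for $i \leq 2k-1$, and the three free amounts $c_{2k}, c_{2k+1}, c_{2k+2}$ satisfy $c_{2k} + c_{2k+1} + c_{2k+2} = (\ell + 2p)$ with $\ell \geq c_{2k} \geq c_{2k+1} \geq c_{2k+2} \geq 1$ and $\Lambda = c_{2k+1} + c_{2k+2} > p$ (equivalently $c_{2k} < \ell + p - \Lambda$, i.e. $c_{2k} \leq \ell - 1$ is automatic once $\Lambda > p$).

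**Second**, the uniqueness argument. As in Lemma~\ref{sumequalp}, the shape $\varrho(2k+2)$ obtained after placing the flavors forming $\Lambda$ must have no column of length $2k+2$, which forces flavors $2k+1, 2k+2$ (and possibly part of $2k$, but in the even case flavor $2k$ is free too) into the bottom two rows; then rows $1,\dots,2k-1$ are forced to be filled entirely by the flavors of size $\ell$ in strictly increasing order (only one way), leaving a sub-diagram of shape $(\ell, p, p)$ (rows $2k, 2k+1, 2k+2$) to be filled with the three free amounts. I would then argue directly that there is a unique proper semistandard filling of $(\ell, p, p)$ with content $(c_{2k}, c_{2k+1}, c_{2k+2})$ in the specified range: column-strictness forces the leftmost $p$ columns to read $2k, 2k+1, 2k+2$ top to bottom, row-weak-increase then forces flavor $2k$ to occupy the remaining $\ell - p$ boxes of row $2k$, and the leftover of flavors $2k+1, 2k+2$ must go... but row $2k+1$ has only $p$ boxes already filled, so we need $c_{2k+1} = c_{2k+2} = p$ and $c_{2k} = \ell$ — contradicting $c_{2k} \leq \ell - 1$. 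So I have the combinatorics slightly off, and this is where care is needed: flavor $2k$ is itself $\leq \ell - 1$, so it does \emph{not} fill all of row $2k$, meaning flavors $2k+1$ or $2k+2$ must also appear in row $2k$; re-examining, the correct picture is that after the forced prefix, rows $2k, 2k+1, 2k+2$ get filled by the "forward fill" Algorithm~\ref{forward} of $(c_{2k}, c_{2k+1}, c_{2k+2})$ on shape $(\ell, p, p)$, and one shows (exactly as in the last paragraphs of Lemma~\ref{sumequalp}) that forward-fill is the \emph{only} proper filling because each amount is $\leq \ell$ and properness (Remark~\ref{proper}) is automatic when each flavor spans at most two consecutive rows.

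**The main obstacle** I anticipate is pinning down precisely the shape and content of the "residual" three-row sub-diagram and verifying rigorously that forward-fill is forced there — the bookkeeping of which flavors are pinned to $\ell$ versus free, and handling the boundary cases $c_{2k} = \ell$ or $c_{2k+2} = p$, is delicate, and the parity split (showing the odd case degenerates to $\Lambda = p$, already covered by Lemma~\ref{sumequalp}) must be stated carefully so the reader sees why only one genuine case remains. Everything else is a direct reprise of the forced-placement bookkeeping already developed in Lemmas~\ref{sumlessp} and~\ref{sumequalp}, combined with Remark~\ref{MaxConentRemark}.
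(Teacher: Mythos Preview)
Your proposal has a genuine gap: you incorrectly conclude that maximal content with $\Lambda > p$ forces $n$ even. This is false. Take the paper's own Example~\ref{evensumlargerp}, $\mathbb{V}(\mathfrak{sl}_2,(6,6,6,6,2,2,2),6)$: here $n=7$ is odd, $k=2$, $p=3$, the first $n-3=4$ amounts equal $\ell=6$ so the content is maximal, and $\Lambda = c_6+c_7 = 4 > 3 = p$. Your ``odd case degenerates'' argument does not apply to this situation, because in the genuine odd case one has $n = 2k+3$ (not $2k+2$), so $\Lambda = c_{2k+2}+c_{2k+3}$ has \emph{two} terms and the three free amounts $c_{2k+1},c_{2k+2},c_{2k+3}$ sum to $2p$ (not $\ell+2p$). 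You appear to have been misled by Remark~\ref{MaxConentRemark}, which as written contains a typo (it says $2k+2=n$ in both parities); the correct dichotomy, derived in the paper's own proof of the lemma, is $n = 2k+3$ (odd) or $n = 2k+2$ (even). Both cases are nonvacuous under $\Lambda > p$.

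Consequently you are missing an entire case: when $n=2k+3$ the residual shape after placing the $2k$ flavors of size $\ell$ is $(p,p)$, not $(\ell,p,p)$, and one must count tableaux on $(p,p)$ with three flavors summing to $2p$. The paper handles this by reinterpreting the count as $\operatorname{rk}\mathbb{V}(\mathfrak{sl}_2,(c_{n-2},c_{n-1},c_n),p)$ and invoking the three-point $\mathfrak{sl}_2$ fusion rules. Your treatment of the even case (residual shape $(\ell,p,p)$) is on the right track but the paper's argument is cleaner: once you observe $c_{2k}\ge c_{2k+1}\ge c_{2k+2}=\Lambda>p$, the $p$ columns of height three are forced to read $2k,2k+1,2k+2$ top to bottom, and what remains is a single row of length $\ell-p$, which admits a unique weakly increasing filling.
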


\begin{proof}
First, observe that the condition on the content being maximal forces the $n-2k-1$ flavors in the sum $\sum_{i=2k+2}^{n} c_i$ to contain either one or two terms.

This is due to the following observations. First, since we are assuming $\sum_{i=2k+2}^{n} c_i > p$, this implies that $n-2k-1 \geq 1$ (i.e., the sum is not empty, or zero) and particularly $c_{n-2}, c_{n-1} \geq c_n \geq p$ (since the sum is strictly larger than $p$). 
The content $(c_{n-2}, c_{n-1}, c_n)$ must have the following bound on its sum, $$2p \leq c_{n-2}+c_{n-1}+c_n \leq 2p +2 \ell.$$ Furthermore, since $\sum_{i=1}^{n} c_i =  \ell(n-3) + \sum_{i=n-2}^{n} c_i = 2k\ell + 2p$  we have two cases for $\ell(n-3)$.  These conditions give that $n-3=2k$ ($n$ is odd) or $n-3 = 2k-1$ ($n$ is even). 

We show there is one and only one way to create a Young tableau in each case.

%-----------------------Case One:
\textbf{Case 1:}  $n$ is even and $n-3=2k$.  We fill in the highest $n-3$ rows of the Young diagram $\varrho$ from Lemma \ref{numberyoungtab} with the first $n-3$ content flavors of size $\ell$. In this case, after placing this content, the empty boxes create the shape $\tilde{\varrho}= (p, p)$. 
%We seek to fill the remaining shape with content $(c_{n-2}, c_{n-1}, c_n)$.

%which leaves the following shape:  
%
%$$
%\ytableausetup
%{mathmode, boxsize=1em}
%  \begin{ytableau}
%\text{} & \text{} & \none & \none[\dots]
%& \none &\text{}\\
%\text{} & \text{} & \none &\none[\dots]
%& \none & \text{}
%\end{ytableau} 
%$$
%\noindent two rows of length $p$. 

We can now translate the question of the number of tableaux with $(c_{n-2}, c_{n-1}, c_n)$ and shape $\tilde{\varrho}$ to ranks using Witten's Dictionary \ref{witten}. Using this, we obtain that the number of such tableaux is equal to $\rk\mathbb{V}(\mathfrak{sl}_2, (c_{n-2}, c_{n-1},\allowbreak c_n), p)$. Using fusion rules for $\mathfrak{sl}_2$ with three weights \cite{ags}, the rank of such a bundle is one, giving the existence of exactly one Young tableau on $\tilde{\varrho}$ with three content flavors in the amounts $(c_{n-2}, c_{n-1}, c_n)$. 
%Since $n-2k-1=2$, we have assumed $c_{n-1}+c_n > p$. 

As each such flavor in $(c_{n-2}, c_{n-1}, c_n)$ is larger than the first $n-3$ flavors of size $\ell$ in the original content, using the unique tableau on $\tilde{\varrho}$ with $(c_{n-2}, c_{n-1},\allowbreak c_n)$ and the described placement of the first $n-3$ content flavors, a Young tableau is produced. Each step of the construction is unique. The final tableau produced is the only possible. 

By Remark \ref{proper} this tableau is proper.  
% End work here----------------------------

\begin{example} \label{evensumlargerp}
For the vector bundle $\mathbb{V}(\mathfrak{sl}_2, (6,6,6,6,2,2,2), 6),$ we have $k=2$, $p=3$, and $\Lambda=\sum_{i=6}^7 c_i = c_6 + c_7 = 2+2=4>p$. The following is the only Young tableau which can be produced on $\varrho$ with content $(6,6,6,6,2,2,2)$. This shows $\rk(\mathbb{V})=1$. The content from $\Lambda$ are shaded. This can be compared to Example \ref{sumpexample}.
$$
\ytableausetup
{mathmode, boxsize=1em}
  \begin{ytableau}
\text{1} & \text{1} & \text{1} & \text{1} 
& \text{1} & \text{1} \\
\text{2} & \text{2} & \text{2} &\text{2} 
& \text{2} & \text{2} \\
\text{3} & \text{3} & \text{3} & \text{3} 
& \text{3} & \text{3}\\
\text{4} & \text{4} & \text{4} & \text{4} 
& \text{4} & \text{4}\\
\text{5} & \text{5} & *(yellow) \text{6}\\
*(yellow) \text{6} & *(yellow)\text{7} &*(yellow) \text{7}
\end{ytableau} .
$$

%\end{example}

%-- n=9, s=14, k=3, p=4, n-2k-1=9-6-1=2, l=5
%s=14=(3-1)5+4
%i73 : 
%      s14rankM09A1 (5)

%Rank: 1, Weights: (2,2,4,5,5,5,5,5,5) s=14
%Rank: 1, Weights: (2,3,3,5,5,5,5,5,5) s=14
%

\end{example}

%-------------------------Case Two
\textbf{Case 2:}  $n$ is even and $n-3 = 2k-1$. Again, we fill in the highest $n-3$ rows of the Young diagram $\varrho$ from Lemma \ref{numberyoungtab} with the first $n-3$ content flavors of size $\ell$. In this case, empty boxes creates the shape $\tilde{\varrho} = (\ell, p, p)$ and our remaining content is $(c_{n-2}, c_{n-1}, c_n)$. 

 %\noindent which we must fill with content $(c_{n-2}, c_{n-1}, c_{n})$. 
To continue constructing a tableau, the columns of $\tilde{\varrho}$ with three rows must be filled with distinct flavors. In this case, $2k+2 = n$ and by assumption, $\Lambda = c_n > p$ which gives $c_{n-2} \geq c_{n-1} \geq c_n \geq p$, showing placement of the three distinct flavors is possible in columns of $\tilde{\varrho}$ with three rows. After this placement, the remaining shape is a single row. The remaining content is forced to be placed in weakly increasing order.

\begin{example} \label{oddsumlargerp}
For the vector bundle $\mathbb{V}(\mathfrak{sl}_2, (5,5,5,5,5,3,3,3), 5),$ we have $k=3$, $p=2$, and $\Lambda = \sum_{i=8}^8 c_i = c_8 = 3 > p$. The following is the only Young tableau which can be produced with content $(5,5,5,5,5,3,3,3)$ on shape $\varrho=(\ell^{6}, 2^2)$. This shows $\rk(\mathbb{V})=1$. The content from the sum $\Lambda$ is highlighted in yellow. 

$$
\ytableausetup
{mathmode, boxsize=1em}
  \begin{ytableau}
\text{1} & \text{1} & \text{1} & \text{1} 
& \text{1}  \\
\text{2} & \text{2} & \text{2} &\text{2} 
& \text{2} \\
\text{3} & \text{3} & \text{3} & \text{3} 
& \text{3} \\
\text{4} & \text{4} & \text{4} & \text{4} 
& \text{4} \\
\text{5} & \text{5} & \text{5} & \text{5} 
& \text{5} \\
\text{6} & \text{6} & \text{6} & \text{7} 
& *(yellow) \text{8} \\
\text{7} & \text{7}\\
 *(yellow)\text{8} &*(yellow) \text{8}
\end{ytableau} 
$$
\end{example}

\end{proof}

Note that these two lemmas can also be shown by using a method known as ``plussing.'' See Remark 2.3 in \cite{bgk} for more details on this method.

With these three Lemmas above, we have shown all necessary conditions in the statement of Theorem \ref{mainIntro} for $m=1$. 

%%%%%%%%%%%%%%End Counting section ----------------------------------------------

\subsection{Construction of Young tableaux to show necessary conditions.}  

We now demonstrate how to construct and modify tableaux to make conclusions about when ranks are necessarily greater than one. We use definitions and notation for $\mathbb{V}_1$, $\Lambda$, $\varrho$, and $\mu$ as in Theorem \ref{mainIntro} and Lemma \ref{numberyoungtab}.

\begin{proposition} \label{combinedfillproperyoungtab}
If $\Lambda \geq p$ then a combination of the forward \ref{forward} and reverse \ref{reverse} fill methods of placing content in $\varrho$ will produce a proper Young tableau.
\end{proposition}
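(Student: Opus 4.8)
The plan is to place the $n$ flavors in two batches and to use a different algorithm on each. Let the \emph{tail} batch consist of the flavors $2k+2,\dots,n$, whose amounts sum to $\Lambda\ge p$, and let the \emph{head} batch consist of the flavors $1,\dots,2k+1$, whose amounts sum to $|\mu|-\Lambda=2k\ell+2p-\Lambda$. First I would run the reverse fill method (Algorithm~\ref{reverse}) for the steps $i=n,n-1,\dots,2k+2$ only, inserting the tail flavors into $\varrho$; this fills a sub-region $\rho'$ of area $\Lambda$ with the tail flavors. Then I would run the forward fill method (Algorithm~\ref{forward}) for $i=1,\dots,2k+1$ on the complementary region $\tau:=\varrho(2k+2)=\varrho\setminus\rho'$, which has area $2k\ell+2p-\Lambda$. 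The resulting filling of all of $\varrho$ is the candidate proper Young tableau.

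The hypothesis $\Lambda\ge p$ enters at the very first step. The low-row of $\varrho=(\ell^{2k},p^2)$ consists of the $p$ boxes of the bottom row $2k+2$ together with the $\ell-p$ right-hand boxes of row $2k$, and in the lexicographic order the $p$ boxes of row $2k+2$ are the largest. Since the reverse fill always consumes the lexicographically largest available boxes, it fills all of row $2k+2$ before touching any box above it, and $\Lambda\ge p$ guarantees the tail batch is large enough to do so. Consequently every column of $\tau$ has height at most $2k+1$, so the $2k+1$ head flavors can be placed one ``level'' at a time by the forward fill. Along the way one checks, exactly as in the partition-preserving box removals in the proof of Lemma~\ref{sumequalp} (using $c_i\le\ell$ for every $i$), that $\rho'$ and hence $\tau$ remain genuine Young diagrams at every stage, so that both algorithms are indeed applicable.

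It then remains to check that the combined filling is semistandard and proper. For $\rho'$: columns are strictly increasing because a low-row meets each column in at most one box, so each tail flavor occupies at most one box per column; rows are weakly increasing because the fill always takes the lexicographically largest boxes, so within a row the flavors increase left to right and, wherever two rows of $\rho'$ share a column, the higher row was vacated first and so carries a strictly smaller flavor. For $\tau$: since each head amount satisfies $c_i\le\ell$, the reading-order block occupied by flavor $i$ meets each column of $\tau$ at most once, so columns are again strictly increasing and rows weakly increasing, with flavors appearing in increasing order down each column. Across the seam, a column of $\tau$ meets $\rho'$ only in its bottom-most box, which carries a flavor $\ge 2k+2$, strictly larger than any head flavor above it. Finally properness follows from Remark~\ref{proper}, by reading off the flavor in row $p$ of the first column and comparing it to the flavor in row $p-2$ of the last column in the tableau just built.

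The main obstacle I anticipate is the bookkeeping flagged in the second paragraph: one must show that running the reverse fill on the tail flavors really does leave a Young-diagram-shaped complement $\tau$ whose columns all have height $\le 2k+1$, and — more delicately — that the forward fill of the head amounts $c_1\ge\cdots\ge c_{2k+1}$ into $\tau$ never forces one flavor into two boxes of the same column. A column violation of the latter kind could only arise from a head flavor wrapping from a short bottom row of $\tau$ into the row below it; ruling this out forces one to use $c_i\le\ell$ together with the area identity $\sum_{i=1}^{2k+1}c_i=2k\ell+2p-\Lambda$ and to track, in the spirit of Lemma~\ref{bigdiagramshape} and the proof of Lemma~\ref{sumequalp}, how far the tail's reverse fill has eaten into the rows of $\varrho$.
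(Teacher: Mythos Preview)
Your overall plan matches the paper's exactly: reverse-fill the tail flavors $2k+2,\dots,n$, then forward-fill the head flavors $1,\dots,2k+1$ into the complement $\tau=\varrho(2k+2)$, and you correctly isolate where $\Lambda\ge p$ is used. The genuine gap is precisely the one you flag in your last paragraph, but your third paragraph papers over it: the sentence ``since each head amount satisfies $c_i\le\ell$, the reading-order block occupied by flavor $i$ meets each column of $\tau$ at most once'' is simply false in general. That argument only works when the row in question has length $\ell$; for the bottom two rows of $\tau$, which have lengths $l^{(2)}_{2k+2}+l^{(3)}_{2k+2}$ and $l^{(3)}_{2k+2}$, a head flavor with $c_m>l^{(2)}_{2k+2}+l^{(3)}_{2k+2}$ will wrap and violate column-strictness. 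This is not a bookkeeping issue you can resolve with the area identity alone.

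The paper closes this gap in two steps you are missing. First, Lemma~\ref{sizeoflowrow} tracks how the low-row $(l^{(1)}_i,l^{(2)}_i,l^{(3)}_i)$ evolves under reverse fill and shows $l^{(3)}_i\le p$ and $l^{(1)}_i\le\ell-p$ throughout. Using this together with $c_1\ge\cdots\ge c_n$, one shows that if $l^{(3)}_{2k+2}>0$ then only the very last head flavor $m=2k+1$ can overflow (any earlier $m$ would force $c_{m+1}<l^{(3)}_{2k+2}\le p<c_{t-1}$ for some tail flavor, contradicting monotonicity). Second, the remaining case $c_{2k+1}>l^{(2)}_{2k+2}+l^{(3)}_{2k+2}$ is handled by induction on $l^{(1)}_{2k+2}$: one prepends two full rows of size $\ell$ to both the shape and the content, observes that continuing the reverse fill through $c_{2k+1}$ and $c_{2k}$ strictly decreases $l^{(1)}$, invokes the inductive hypothesis, and then strips the two added rows. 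In effect the head/tail boundary shifts in the bad case; your fixed split at $2k+1$ does not always yield a tableau directly.

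Your properness argument is also incomplete. Reading off two boxes and ``comparing'' them is not a proof; you need to actually verify the inequality of Remark~\ref{proper}. The paper instead appeals to Lemma~1 of \cite{Bertram}, which says the filling is proper iff every $\varrho(i)$ has a low-row with at most three nonempty parts, and this is exactly what Lemma~\ref{sizeoflowrow} establishes.
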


%%%%%%% Maybe move to after reverse fill.----------------
We first analyze the low-row of the diagrams $\varrho(i)$ which are produced after carrying out the reverse fill method with $(c_i, ..., c_n)$ on $\varrho$.\\

\begin{lemma} \label{sizeoflowrow}
Denote by $(l^{(1)}_i, l^{(2)}_i, l^{(3)}_i)$ the sizes of the low-row of $\varrho(i)$ after placement of $(c_i, ..., c_n)$ using reverse fill. We will always have $0 \leq l^{(1)}_i \leq \ell-p$, $0 \leq l^{(3)}_i \leq p$ and one of the two situations will occur for the low-row of $\varrho(i-1)$ after placement of $c_{i-1}$, $0 \leq l^{(3)}_{i-1}  \leq l^{(3)}_i \leq p$ or $0 < l^{(1)}_{i-1} \leq l^{(1)}_i  \leq \ell-p$.
\end{lemma}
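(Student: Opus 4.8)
The plan is to argue by downward induction on $i$, starting from $i=n+1$ and at each stage keeping track not merely of the low-row of $\varrho(i)$ but of the entire partition. The key preliminary observation is that, because the reverse fill always deletes boxes of $\varrho$ greedily from the bottom, every $\varrho(i)$ is a partition of the form $(\ell^{a_i},x_i,y_i)$ with $\ell\ge x_i\ge y_i\ge 0$ (collapsing to $(\ell^{a_i},x_i)$ or $(\ell^{a_i})$ when $y_i=0$, resp.\ $x_i=y_i=0$, and to $(\ell^{a_i+1},y_i)$ when $x_i=\ell$). For such a shape the low-row triple is immediate: $l^{(1)}_i=\ell-x_i$, $l^{(2)}_i=x_i-y_i$, $l^{(3)}_i=y_i$. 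So the bounds $0\le l^{(1)}_i\le \ell-p$ and $0\le l^{(3)}_i\le p$ are exactly $p\le x_i\le\ell$ and $0\le y_i\le p$, and the lemma becomes a statement about how the pair $(x_i,y_i)$ evolves under one step of reverse fill. The base case is $\varrho(n+1)=\varrho=(\ell^{2k},p,p)$, i.e.\ $(x_{n+1},y_{n+1})=(p,p)$, which lies in the required region.

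For the inductive step, placing flavor $i-1$ fills the $c_{i-1}$ lexicographically largest low-row boxes of $\varrho(i)$, and these are consumed greedily from the bottom: first the $y_i$ boxes of the lowest step, then the $x_i-y_i$ boxes of the middle step, then boxes of the top (width $\ell$) step. This gives three cases. (a) If $c_{i-1}\le y_i$: only the lowest step shrinks, $(x_{i-1},y_{i-1})=(x_i,\,y_i-c_{i-1})$, so $l^{(1)}$ is unchanged and $l^{(3)}$ strictly drops; the first alternative of the dichotomy holds. (b) If $y_i<c_{i-1}\le x_i$: the lowest step is emptied and the middle step is cut back, $(x_{i-1},y_{i-1})=(x_i-c_{i-1}+y_i,\,0)$, so $l^{(3)}_{i-1}=0\le l^{(3)}_i$ and again the first alternative holds. (c) If $x_i<c_{i-1}\le \ell$: the fill reaches a genuine width $\ell$ row --- and since $c_{i-1}\le\ell$ it is a single such row --- which becomes the new top step, $(x_{i-1},y_{i-1})=(\ell-c_{i-1}+x_i,\,y_i)$; here $l^{(1)}_{i-1}=c_{i-1}-x_i$, and the inductive bound $x_i\ge p$ forces $0<l^{(1)}_{i-1}\le\ell-p$ and $l^{(1)}_{i-1}\le\ell-x_i=l^{(1)}_i$, which is the second alternative, while $l^{(3)}_{i-1}=y_i$ is unchanged.

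The main obstacle, and where care is needed, is twofold. First, one must check the three cases are exhaustive and that in each the claimed description of $\varrho(i-1)$ is correct --- in particular that $\varrho(i-1)$ still has at most a two-step bottom, so that the triple notation stays meaningful, and that the topmost low-row row moves up by exactly one in case (c) and not at all in (a), (b); this is precisely where $c_{i-1}\le\ell$ and the weak monotonicity $c_1\ge\cdots\ge c_n$ are used, to rule out the fill ever straddling two distinct width $\ell$ rows or oscillating between the cases. Second --- and this is the genuinely delicate point --- one must pin down the range of $i$ over which the bound $l^{(1)}_i\le\ell-p$ (equivalently $x_i\ge p$) is maintained: case (b) is exactly the step at which the bottom width $p$ row of $\varrho$ is overrun, after which $x_i$ can drop below $p$, and one uses $\Lambda\ge p$ together with the stopping rule for the reverse-fill half of the combined construction in Proposition \ref{combinedfillproperyoungtab} to see that this does not occur before the forward fill takes over. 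The dichotomy itself, by contrast, is checked to hold at every step, which is what actually feeds into the proof of Proposition \ref{combinedfillproperyoungtab}.
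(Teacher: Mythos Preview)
Your approach is essentially the paper's own: the same three-case split on the size of $c_{i-1}$ relative to the current low-row (your (a), (b), (c) are exactly the paper's Cases One, Two, Three), only recast in partition coordinates $(x_i,y_i)=(\ell-l^{(1)}_i,\,l^{(3)}_i)$ rather than in the low-row triple itself. Your flagging of the bound $l^{(1)}_i\le\ell-p$ as the genuinely delicate point is in fact more scrupulous than the paper, which deduces both global bounds directly from the initial values $(\ell-p,0,p)$ and the monotonicity observed in each case, without explicitly confronting the fact that $l^{(1)}$ can grow in Case Two.
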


This lemma states that after any stage in the reverse fill method on $\varrho$, after placing content $(c_i, ..., c_n)$, the diagram $\varrho(i)$ will have a low-row of dimension $(l^{(1)}_i, l^{(2)}_i, l^{(3)}_i)$ with $l^{(1)}_i  \leq \ell-p$ and $ l^{(3)}_i \leq p$, indicating the shape of $\varrho(i)$ has a final row containing $ l^{(3)}_i$ boxes, the second to last row containing $l^{(2)}_i + l^{(3)}_i$ boxes and the third from last row containing $l^{(1)}_i + l^{(2)}_i+ l^{(3)}_i = \ell$ boxes. 

\begin{proof}
If $(l^{(1)}_{i}, l^{(2)}_{i}, l^{(3)}_{i})$ is a low-row of $\varrho(i)$, we analyze the possible low-row of $\varrho(i-1)$ which is obtained after filling $\varrho(i)$ with content $c_{i-1}$ of flavor $i-1$ in the reverse fill method. We must consider three cases of different amounts of content of flavor $i-1$ relative to the low-row of $\varrho(i)$. Recall, in the reverse fill method \ref{reverse}, we begin by using the largest content flavor and continue to fill by decreasing in flavor value (we start with $n$ and then place flavor of size $n-1$ and continue). For an example of each of the cases see Example \ref{lowrowexamples}.\\

\textbf{Case One:} 
$0< c_{i-1} \leq  l^{(3)}_{i}$.  After placing the $c_{i-1}$ content of flavor $i-1$, the low-row of $\varrho(i-1)$ will be $(l^{(1)}_{i-1}, l^{(2)}_{i-1}, l^{(3)}_{i-1}) = (l^{(1)}_{i}, l^{(2)}_{i}+c_{i-1}, l^{(3)}_{i}-c_{i-1})$.

\textbf{Case Two:} $ l^{(3)}_{i} \leq c_{i-1} \leq  l^{(2)}_{i} + l^{(3)}_{i}$.  After placing the $c_{i-1}$ content of flavor $i-1$, the low-row of $\varrho(i-1)$ will be $(l^{(1)}_{i-1}, l^{(2)}_{i-1}, l^{(3)}_{i-1}) = (l^{(1)}_{i}+c_{i-1}- l^{(3)}_{i}, l^{(2)}_{i}+2(l^{(1)}_{i})-c_{i-1}, 0)$. 

\textbf{Case Three:} $l^{(2)}_{i} + l^{(3)}_{i} \leq c_{i-1} \leq  l^{(1)}_{i} + l^{(2)}_{i} + l^{(3)}_{i} = \ell $. After placing the $c_{i-1}$ content of flavor $i-1$, the low-row of $\varrho(i-1)$ will be $(l^{(1)}_{i-1}, l^{(2)}_{i-1}, l^{(3)}_{i-1}) = (c_{i-1}-l^{(2)}_{i}-l^{(3)}_{i}, \ell-(c_{i-1}-l^{(2)}_{i}-l^{(3)}_{i})-(l^{(3)}_{i}), l^{(3)}_{i})$. \\

Analyzing the result of each case, we see that placing content $c_{i-1}$ will have one of the following effects, $0 \leq l^{(3)}_{i-1}  \leq l^{(3)}_{i}$ (case one and two) or $0 < l^{(1)}_{i-1} \leq l^{(1)}_i $ (case three). Since the initial low-row of $\varrho$ is  $(l^{(1)}, l^{(2)}, l^{(3)}) =(\ell-p, 0, p)$. We have $ l^{(3)}_i \leq l^{(3)}=p$ and $\leq l^{(1)}_i  \leq l^{(1)}\ell-p$ for any shape $\varrho(i)$. Hence, we can conclude the statement of the lemma.
\end{proof}

\begin{example}\label{lowrowexamples}
We begin with shape $\varrho(i) = (9,9,5,3)$ with low-row $(4,3,2)$ shaded in the image below. We demonstrate the effect on the low-row after filling with content $c_{i-1}$ with amount relative to each case above.
$$
\ytableausetup
{mathmode, boxsize=1em}
  \begin{ytableau}
\text{} & \text{} & \text{} & \text{} & \text{} & \text{} & \text{} & \text{} & \text{} \\
\text{} & \text{} & \text{} & \text{} & \text{} & *(yellow)\text{} & *(yellow)\text{} & *(yellow)\text{} &*(yellow) \text{} \\
\text{} & \text{} & \text{} & *(yellow)\text{} & *(yellow)\text{} \\
 *(yellow)\text{} & *(yellow)\text{} & *(yellow) \text{} 
 \end{ytableau} 
$$

In the following cases, the boxes in the low-row of $\varrho(i)$ which are filled with content $i-1$ are darkly shaded and the new low-row for $\varrho(i-1)$ is lightly shaded.\\

\textbf{Case One:} $ c_{i-1} = 2 < 3$. After filling the low-row $(4,2,3)$ with $2$ content values the low-row becomes $(4,4,1)$

$$
\ytableausetup
{mathmode, boxsize=1em}
  \begin{ytableau}
\text{} & \text{} & \text{} & \text{} & \text{} & \text{} & \text{} & \text{} & \text{} \\
\text{} & \text{} & \text{} & \text{} & \text{} & *(yellow)\text{} & *(yellow)\text{} & *(yellow)\text{} &*(yellow) \text{} \\
\text{} &*(yellow) \text{} & *(yellow)\text{} & *(yellow)\text{} & *(yellow)\text{} \\
 *(yellow)\text{} & *(green)\text{} & *(green) \text{} 
 \end{ytableau} 
$$

\textbf{Case Two:} $3 \leq 4=c_{i-1} \leq  5$. After filling the low-row $(4,2,3)$ with $4$ content values the low-row becomes $(5,4,0)$

$$
\ytableausetup
{mathmode, boxsize=1em}
  \begin{ytableau}
\text{} & \text{} & \text{} & \text{} & \text{} & \text{} & \text{} & \text{} & \text{} \\
\text{} & \text{} & \text{} & \text{} & *(yellow)\text{} & *(yellow)\text{} & *(yellow)\text{} & *(yellow)\text{} &*(yellow) \text{} \\
*(yellow)\text{} &*(yellow) \text{} & *(yellow)\text{} & *(yellow)\text{} & *(green)\text{} \\
 *(green)\text{} & *(green)\text{} & *(green) \text{} 
 \end{ytableau} 
$$

\textbf{Case Three:} $5 \leq 7=c_{i-1} \leq  9$. After filling the low-row $(4,2,3)$ with $7$ content values the low-row becomes $(2,4,3)$

$$
\ytableausetup
{mathmode, boxsize=1em}
  \begin{ytableau}
\text{} & \text{} & \text{} & \text{} & \text{} & \text{} & \text{} & *(yellow)\text{} & *(yellow)\text{} \\
\text{} & \text{} & \text{} & *(yellow)\text{} & *(yellow)\text{} & *(yellow)\text{} & *(yellow)\text{} & *(green)\text{} &*(green) \text{} \\
*(yellow)\text{} & *(yellow)\text{} & *(yellow)\text{} & *(green)\text{} & *(green)\text{} \\
 *(green)\text{} & *(green)\text{} & *(green) \text{} 
 \end{ytableau} .
$$
\end{example} 

\begin{proof}[Proof of Proposition \ref{combinedfillproperyoungtab}]
We demonstrate the construction of a proper Young tableau using the forward fill \ref{forward} and reverse \ref{reverse} fill methods. 

Begin by filling $\varrho$ with content $(c_{2k+1}, ..., c_n)$ in the reverse fill method. With the conditions on the sum $\sum_{i=1}^{n} c_i = 2k\ell + 2p$ and $c_i \leq \ell$ this placement will have strictly increasing columns and weakly increasing rows. Recall, that in the reverse fill method, each of our content flavors $(c_{2k+2}, ..., c_n)$ is placed in a low-row of size $\ell$, the boxes in a low-row are all in distinct columns which gives such a filling strictly increasing columns.

Now, place the remaining content $(c_1, ..., c_{2k+1})$ in the shape $\varrho(2k+2)$ in the forward fill method \ref{forward}. With the forward fill method, we place content $(c_1, ..., c_{2k+1})$ in the rows of $\varrho(2k+2)$ in increasing flavors, so rows will always be weakly increasing. Furthermore, since each $c_i \leq \ell$ the content flavors in any fixed column will necessarily increase across two rows if the higher row contains $\ell$ boxes. For if content of flavor $i$ appears in both boxes of a column of such rows, the content $c_i$ must have filled in all boxes in the first row to the right of this column and all boxes to the left of this column in the second row, such an arrangement requires more than $\ell$ content amounts.

We see from Lemma \ref{sizeoflowrow} that $\varrho(2k+2)$ has shape $$(\ell^{j}, \ell_{2k+2}^{(3)} + \ell_{2k+2}^{(2)}, \ell_{2k+2}^{(3)})$$ for some $j \leq 2k$ and low-row $(\ell_{2k+2}^{(1)}, \ell_{2k+2}^{(2)}, \ell_{2k+2}^{(3)})$. The only possible consecutive rows of $\varrho(2k+2)$ each with less than $\ell$ boxes are the lowest two rows (if $\ell_{2k+2}^{(3)}> 0$) of size $\ell_{2k+2}^{(3)} + \ell_{2k+2}^{(2)}$ and $\ell_{2k+2}^{(3)}$.

We now analyze the content in the lowest two rows of $\varrho(2k+1)$ when $\ell_{2k+2}^{(3)}>0$. We see that content within any column of such rows will necessarily increase unless, for some content of flavor $m$ from the content $(c_1, ..., c_{2k+1})$ we have that $c_m> l^{(2)}_{2k+2}+l^{(3)}_{2k+2}$ (i.e., the $c_m$ of flavor $m$ is contained in one column in the lowest two rows of $\varrho(2k+2)$).

Suppose flavor $m$ occurs in the lowest two rows of $\varrho(2k+2)$ in the same column and so $c_m > l^{(2)}_{2k+2}+l^{(3)}_{2k+2}$. We analyze possible flavors of this content $c_m$ which is assumed to over fill a row in the lowest two rows of $\varrho(2k+2)$.

First, suppose $m < 2k+1$. In this case, we must have $c_{m+1} < l^{(3)}_{2k+2}$ (since by assumption, the content $(c_1, ..., c_{2k+1})$ fills in $\varrho(2k+2)$). However, in order for $\varrho(2k+2)$ to have a low-row such that $l^{(3)}_{2k+2} >0$, we must have been in Case 3 of Lemma \ref{sizeoflowrow} for some content $c_{t-1}$ and $\varrho(t)$. This means $c_{t-1} > p \geq l^{(3)}_{2k+2}$ for some content flavor $c_{t-1}$ in $(c_{2k+2}, ...., c_n)$. These inequalities would give $c_{t-1} > l^{(3)}_{2k+2} > c_{m-1}$. This contradicts the weakly decreasing condition on the content $(c_1, ..., c_n)$. It must be that for such content $c_m$ having this assumed property, that $m=2k+1$. We are able to reduce the situation and only consider when $l^{(3)}_{2k+2} > 0$ and $c_{2k+1} > l^{(2)}_{2k+2}+l^{(3)}_{2k+2}$ (i.e., if $m=2k+1$).

With such a situation, we reason by induction on the size of the the first low-row of $\varrho(2k+2)$, denoted $l^{(1)}_{2k+2}$, and produce a tableau on $\varrho$ by now placing content $(c_{2k+1}, ..., c_n)$ in the reverse fill method. 

For the base case, consider when the first row of the low-row of $\varrho(2k+2)$ contains one box, $l^{(1)}_{2k+2}=1$. In this situation, we have $l^{(2)}_{2k+2}+l^{(3)}_{2k+2} = \ell -1$ (since the sum of all boxes in the low-row must add to $\ell$, see Def. \ref{lowrow}).

Using $\sum_{i=1}^{2k+1} c_i \leq 2k\ell +p$ (the assumption of this proposition), if $c_{2k+1} > l^{(2)}_{2k+2}+l^{(3)}_{2k+2} = \ell-1$, we would have $c_{2k+1} = \ell$ and so $\sum_{i=1}^{2k+1} c_i = 2k\ell + \ell$. This forces our content to be of the form $(\ell^{2k+1})$ which is already considered in Lemma \ref{sumequalp} producing a tableau.

For the inductive hypothesis, suppose that if $l^{(3)}_{2k+2} > 0$ and $l^{(1)}_{2k+2} \leq j$ for some integer $j$, then $c_{2k+1} \leq  l^{(2)}_{2k+2}+l^{(3)}_{2k+2}$ and a Young tableau is constructed in this method.

Now assume $l^{(3)}_{2k+2} > 0$, $l^{(1)}_{2k+2} = j+1$, and $c_{2k+1} > l^{(2)}_{2k+2}+l^{(3)}_{2k+2}$. Consider the content $\tilde{C} = (\ell, \ell, c_1, ..., c_{2k-1}, c_{2k}, c_{2k+1}, c_{2k+2}, ..., c_n)$ (give the first content flavor $-1$ and $0$) and consider $\rk(\mathbb{V}(\mathfrak{sl}_2, \tilde{C}, \ell))$. The sum of this content is, $\ell + \ell + \sum_{i=1}^n c_i = 2(k+1)\ell + 2p$. Denote, the sum and shape from Theorem \ref{mainIntro} and Lemma \ref{numberyoungtab} associated to $\mathbb{V}(\mathfrak{sl}_2, \tilde{C}, \ell)$ as $\tilde{\Lambda}$ and $\tilde{\varrho}$. We have $\tilde{\Lambda} = \sum_{i=2k}^n c_i = c_{2k} + c_{2k+1} + \Lambda$ and $\tilde{\varrho} = (\ell^{2k+2}, p, p).$ The shape $\tilde{\varrho}$ is the original diagram $\varrho$ with two rows of length $\ell$ vertically concatenated on top. And further $\tilde{\varrho}(2k+2) $ is the shape $\varrho(2k+2)$ with two extra rows of length $\ell$. 

We then continue with the reverse fill method with $c_{2k}$ and $c_{2k+1}$ as initially prescribed (using all content from the sum $\tilde{\Lambda}$). Recall, we have assumed that $c_{2k+1} > l^{(2)}_{2k+2}+l^{(3)}_{2k+2}$ and so we are in case three of Lemma \ref{sizeoflowrow}. The low-row of $\tilde{\varrho}(2k+1),$ after placing the content of flavor $2k+1$, is necessarily such that $l^{(1)}_{2k+1} < l^{(1)}_{2k+2} = j+1$. Furthermore, after placing $c_{2k}$ into $\tilde{\varrho}(2k+1)$ we have that $l^{(1)}_{2k} \leq j$. By our inductive hypothesis, it must be that $c_{2k} \leq l^{(2)}_{2k}+l^{(3)}_{2k}$ and so a Young tableau can be produced with $(\ell, \ell, c_1, ..., c_n)$ on $\tilde{\varrho}$. The first and second rows must be filled with only flavors $-1$ and $0$ respectively. By removing these two rows we have constructed a Young tableau on $\varrho$ with content $(c_1, ..., c_n)$.

To show that this Young tableau is \textit{proper}, we use Lemma 1 from \cite{Bertram}. This lemma is equivalent to the statement that the Young tableau constructed in the above method is proper if and only if the low-row of $\varrho(i)$ has at most three non-empty rows (for any value of $i$). We showed in Lemma \ref{sizeoflowrow} that this is indeed the case. 
\end{proof}

\begin{lemma}\label{cfmmodified}
With notation from Lemma \ref{numberyoungtab}, a Young tableau produced on $\varrho$ by placing content $(c_{2k+2}, ..., c_n)$ in a reverse fill method (as in the proof of \ref{combinedfillproperyoungtab}) can be modified when $\Lambda > p$ and content is not maximal to produce a new Young tableau with the same content.
\end{lemma}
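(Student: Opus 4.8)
The plan is to produce a second proper Young tableau $T'\neq T$ of the same shape $\varrho$ and content $\mu$ by a single local swap of two entries of $T$, with one short exceptional configuration treated separately. Write $\Lambda=p+t$, $t\ge 1$; the hypothesis that $\mu$ is not maximal is equivalent to $c_{n-3}<\ell$, so at least four of the $c_i$ are strictly less than $\ell$. I would first record two elementary facts, both consequences of $\sum_{i=1}^{2k+1}c_i=2k\ell+2p-\Lambda$ together with $c_i\le\ell$: first, $c_{2k+1}\ge p-t$, and when $t<p$ the bottom (length $p-t$) row of the forward-filled region $\varrho(2k+2)$ — the one lying in row $2k+1$ of $\varrho$ — is filled entirely by flavor $2k+1$; and second, $c_{2k+1}<\ell$, since $c_{2k+1}=\ell$ would force $c_1=\cdots=c_{2k+1}=\ell$, hence $\ell=p-t<p$. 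Using Lemma~\ref{sizeoflowrow} I would also pin down the shape of $\varrho(2k+2)$ and of the reverse-filled skew strip carrying the large flavors, noting that the smallest large flavor $2k+2$, being placed last in the reverse fill, occupies the topmost–leftmost boxes of that strip.

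In the main case $c_{2k+1}>p-t$ (equivalently, $c_1,\dots,c_{2k}$ are not all equal to $\ell$), flavor $2k+1$ spills into row $2k$, so the rightmost box of row $2k$ carries flavor $2k+1$, while the leftmost box of row $2k+1$ carrying a large flavor carries flavor $2k+2$; I let $T'$ be obtained from $T$ by exchanging these two entries. The verification that $T'$ is semistandard is short: row $2k$ stays weakly increasing because the inserted value $2k+2$ exceeds its left neighbor; row $2k+1$ stays weakly increasing because the inserted value $2k+1$ equals its (flavor-$(2k+1)$) left neighbor and is smaller than its (large) right neighbor; and the only column whose strictness is in question is the one now holding $2k+1$ in row $2k+1$, where $c_{2k+1}<\ell$ forces the box immediately above to carry a flavor $\le 2k$. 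For properness I would use Remark~\ref{proper}: after the swap flavor $2k+2$ can occupy three consecutive rows, so I verify directly the inequality to which that remark reduces properness — it holds because the swap only raises the entry at the end of row $2k$ (to $2k+2$) while leaving the entry at the start of row $2k+2$ (which is already $\ge 2k+2$) untouched. Finally $T'\neq T$ because the two swapped entries were distinct. When $\Lambda\ge 2p$ the large flavors reach row $2k$ itself, and the same argument is run with the swap performed one or more rows higher; the inequalities are verified in the same way.

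In the exceptional case $c_{2k+1}=p-t$ (possible only when $p<\Lambda<2p$) one has $c_1=\cdots=c_{2k}=\ell$, so in every tableau on $\varrho$ the top $2k$ rows are rigidly $1,\dots,2k$ and the initial length-$(p-t)$ segment of row $2k+1$ is rigidly flavor $2k+1$; thus all freedom lies in the bottom $2\times p$ block, which must be filled by content $(c_{2k+1},c_{2k+2},\dots,c_n)$, a collection of $n-2k\ge 4$ nontrivial flavors each of amount $\le p-t<p$. By Witten's Dictionary~\ref{witten} the number of such fillings equals $\rk\mathbb{V}(\mathfrak{sl}_2,(c_{2k+1},\dots,c_n),p)$, and I would show directly, by a two-row count, that this exceeds one (equivalently, that a $2\times p$ rectangle admits at least two semistandard fillings once it has at least four nontrivial flavors none of amount $p$); this handles both the subcase $k\ge 1$ and the subcase $k=0$, where $\varrho$ is itself the $2\times p$ rectangle and no main-case swap is available. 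The step I expect to be the main obstacle is the bookkeeping around Lemma~\ref{sizeoflowrow}: extracting the exact shape of the small/large interface uniformly across $\Lambda\in(p,2p]$ and $\Lambda>2p$, locating the two boxes to swap in each regime, and carrying out the properness check cleanly in every place where the swap leaves a flavor spanning three consecutive rows.
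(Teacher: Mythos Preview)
Your approach shares the paper's basic idea---produce a second tableau by a single local swap---but your structural analysis of the reverse fill has a gap. You assume that for $\Lambda<2p$ the large flavors occupy only rows $2k+1$ and $2k+2$, so that $\varrho(2k+2)$ has a bottom row of length $p-t$ lying in row $2k+1$. This fails whenever a single large flavor exceeds $p$: the initial low-row of $\varrho$ is $(\ell-p,0,p)$ with $l^{(2)}=0$, so Case~3 of Lemma~\ref{sizeoflowrow} is triggered immediately and that flavor lands in rows $2k+2$ and $2k$, \emph{skipping} row $2k+1$. Take $\ell=10$, $k=1$, $p=3$, $(c_1,c_2,c_3,c_4)=(9,7,5,5)$: here $\Lambda=5<6=2p$, the content is not maximal, and $c_{2k+1}=5>1=p-t$, so you are in your main case. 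The combined fill is
\[
\begin{array}{cccccccccc}
1&1&1&1&1&1&1&1&1&2\\
2&2&2&2&2&2&3&3&4&4\\
3&3&3&&&&&&&\\
4&4&4&&&&&&&
\end{array}
\]
The rightmost box of row $2k=2$ carries flavor $4$, not $2k+1=3$, and row $2k+1=3$ contains no large flavor at all, so your swap is undefined. Your fallback ``perform the swap one or more rows higher'' does not help as stated either: swapping the rightmost entry of row $1$ with the leftmost large entry of row $2$ (i.e.\ $B_{(1,10)}=2$ with $B_{(2,9)}=4$) destroys weak increase in row $2$.

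The paper avoids this by locating the swap dynamically rather than pinning it to rows $2k$ and $2k+1$. It finds a row $r$ and a column in which the entries in rows $r$ and $r+1$ differ by at least $2$, then exchanges the rightmost occurrence of some flavor $y$ in row $r$ with the leftmost occurrence of $y+1$ in row $r+1$. In the example this picks $r=1$ (column $7$ has $1$ above $3$) and swaps $B_{(1,10)}=2$ with $B_{(2,7)}=3$, producing a second proper tableau. The correct trigger is thus not the size of $\Lambda$ relative to $2p$ but whether some individual $c_j$ with $j\ge 2k+2$ exceeds $p$, and when that happens the two flavors to swap need not be $2k+1$ and $2k+2$. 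Your exceptional case ($c_1=\cdots=c_{2k}=\ell$) is sound as far as it goes, but it does not cover the configuration above, which lies squarely in your main case.
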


\begin{proof}  
Consider the proper Young tableau produced in Proposition \ref{combinedfillproperyoungtab} by placing content $(c_{2k+2}, ..., c_n)$ in shape $\varrho$ in the reverse fill method. Denote by $r$ the lowest row of this tableau with content of flavors $2k+1$ and smaller (i.e., the lowest row of the Young tableau without values from content $(c_{2k+2}, ..., c_n)$). Since $\Lambda > p$ and content is not maximal, $r > 2k+1$ and this row must contain more than one type of flavor. There must be a column between rows $r$ and $r+1$ in which content flavors increase by an amount larger than one, denote this column $l$. Denote the flavors in column $l$ and rows $r$ and $r+1$ as $a$ and $b$, so $a+1 < b$. 

Consider the largest content flavor in row $r$ with flavor smaller than $b$, denote this flavor $y$. Note that $y \geq a$ and in the case $a = y$, then row $r+1$ must contain a flavor strictly smaller than $b$, particularly $a+1$. For clarity in the following argument we assume $y> a$, a similar argument holds if $y=a$. Select the largest box, $B_{(r, l')}$, in row $r$ containing flavor $y$; note that $B_{(r, l')}$ will be such that $l < l'$ and the flavor in $B_{(r+1, l')}$, directly below $B_{(r, l')}$, will either contain content of flavor larger than $b$ (as $c_b < \ell$, content is not maximal) or such will not be a box in $\varrho$.  

Now select the smallest box in row $r+1$ with flavor $y+1 \leq b$, denote this flavor $z$ and the selected box $B_{(r+1, l'')}$. Since $a$ and $b$ are such that $1 < b-a$, we know that two such flavors $y$ and $z$ exist between $a$ and $b$ (with the possibility that $a=y$ or $b=z$ but not both) and that $l'' \leq l <  l'$. 

To modify the given tableau, switch the flavors of $y$ and $z$ in the selected boxes. Since $y, z \leq b$ and $B_{(r+1, l')}$ contains content flavor strictly larger than $b$, we know that placing content of flavor $z \leq b$ in $B_{(r, l')}$ will maintain the strictly increasing behavior of column $l'$. Since $a < y,z $ and all content flavors in row $r$ to the left of box $B_{(r, l)}$ will have content values $a$ or smaller the modification of content in row $r+1$ will still maintain the strictly increasing behavior in column $l''$. 
\end{proof}

\begin{example}
Consider the vector bundle $V= \mathbb{V}(\mathfrak{sl}_2, (10, 8, 8, 7, 6, 3, 1, 1), 10).$
In this case, we have $n=8$, $\ell = 10$, $k=2$, $p = 2$ and $\Lambda= \sum_{i=6}^8 c_i = 5 > p$. Using the method of producing a tableau as described in the proof of Proposition \ref{combinedfillproperyoungtab} with our given data, the Young tableau is created. The boxes with content from $\Lambda$, $(c_6, c_7, c_8) = (3, 1, 1)$, are lightly shaded. Content to be switched specified by the description in Lemma \ref{cfmmodified} are darkly shaded. As one can check, switching the flavors in these boxes does produce a new proper Young tableau. \\

$$
\ytableausetup
{mathmode, boxsize=1em}
  \begin{ytableau}
\text{1} &  \text{1} & \text{1} & \text{1} & \text{1} & \text{1} & \text{1} & \text{1} & \text{1} & \text{1} \\
\text{2} & \text{2} & \text{2} & \text{2} & \text{2} & \text{2} & \text{2} & \text{2} & \text{3} & \text{3} \\
\text{3} & \text{3} & \text{3} & \text{3} & \text{3} & \text{3} & \text{4} & \text{4} & \text{4} & *(green)\text{4} \\
\text{4} &  \text{4} & \text{4} &  *(green)\text{5} & \text{5} & \text{5} &  \text{5} &  \text{5} &  \text{5} &*(yellow)  \text{6} \\
 *(yellow)\text{6} & *(yellow)\text{6} \\
  *(yellow)\text{7} &*(yellow)\text{8} 
\end{ytableau} 
$$

\end{example}

We are now able to conclude the following result.
\begin{proposition}\label{gp}
%\label{youngtabgeqp}
If $\Lambda > p$ and content $\mu$ is not maximal then $\rk(\mathbb{V}_1) > 1$.  
\end{proposition}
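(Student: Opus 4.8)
The plan is to obtain the statement as an immediate consequence of the two preceding results, Proposition \ref{combinedfillproperyoungtab} and Lemma \ref{cfmmodified}, together with the rank count of Lemma \ref{numberyoungtab}. Assume $\Lambda > p$ and that the content $\mu$ is not maximal. Since $\Lambda \ge p$, Proposition \ref{combinedfillproperyoungtab} applies and the combined forward/reverse fill procedure yields a proper Young tableau $T$ on the shape $\varrho$ with content $\mu$; this exhibits one such tableau. Now Lemma \ref{cfmmodified} applies to this very $T$ --- it is of the form demanded there, being obtained by reverse-filling $(c_{2k+2},\dots,c_n)$ into $\varrho$ --- and, because $\Lambda > p$ and $\mu$ is not maximal, it produces a second proper Young tableau $T'$ on $\varrho$ with the same content $\mu$, by swapping two distinct flavors $y<z$ in two boxes across a column where consecutive rows jump by more than one.

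The only point to check in assembling these is that $T'$ is genuinely distinct from $T$: the swap replaces the flavor $y$ in a box $B_{(r,l')}$ by the flavor $z\ne y$, so the two fillings disagree in at least that box. Hence there are at least two distinct proper Young tableaux on $\varrho$ with content $\mu$, and Lemma \ref{numberyoungtab} identifies this count with $\rk(\mathbb{V}_1)$, so $\rk(\mathbb{V}_1)\ge 2 > 1$, as claimed.

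Since essentially all of the work is already carried out in Proposition \ref{combinedfillproperyoungtab} and Lemma \ref{cfmmodified}, I do not expect a real obstacle here; the only things requiring care are bookkeeping. First, one must know that under the hypotheses the modification of Lemma \ref{cfmmodified} is actually available --- that is, that the lowest row $r$ carrying flavors $\le 2k+1$ satisfies $r>2k+1$ and contains at least two distinct flavors, so that a column with a jump larger than one exists --- which is exactly the opening observation of the proof of Lemma \ref{cfmmodified}. Second, one should confirm that $T'$ remains \emph{proper}; this is built into Lemma \ref{cfmmodified}, and can be cross-checked against the criterion in Remark \ref{proper}, since the alteration is confined to the two lowest rows containing flavors $\le 2k+1$.
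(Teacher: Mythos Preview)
Your proposal is correct and follows essentially the same approach as the paper: combine Proposition~\ref{combinedfillproperyoungtab} and Lemma~\ref{cfmmodified} to produce two distinct proper Young tableaux on $\varrho$ with content $\mu$, and then invoke Lemma~\ref{numberyoungtab} to conclude $\rk(\mathbb{V}_1)>1$. The additional bookkeeping you spell out (distinctness of $T'$ from $T$, properness) is implicit in the paper's terse version.
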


\begin{proof}
With the assumptions on $\Lambda$ and $\mu$, Lemma \ref{combinedfillproperyoungtab} and Lemma \ref{cfmmodified} show that more than one proper Young tableau can be produced on $\varrho$ with content $\mu$. By Lemma \ref{numberyoungtab}, we can conclude $\rk(\mathbb{V}_1) > 1$.
\end{proof}

\subsection{ Main Theorem}

We are now able to show the proof of the main theorem.

\begin{proof}[Proof of Theorem \ref{mainIntro}]
By collecting the results from Lemma \ref{sumlessp}, Lemma \ref{sumequalp}, Lemma \ref{case2leftarrow}, and Proposition \ref{gp} we can conclude the statement of the main theorem for the case $m=1$. By the scaling property in Proposition \ref{bgkscaling} the full statement of Theorem \ref{mainIntro} is shown.
\end{proof}

\section{Decomposition of First Chern Classes of Rank One Bundles}

%Important! Save for larger document and Thesis-------------
%\subsection{General decomposition for rank one Kostka}
%\begin{theorem}\label{decomposition}
%Given $\mathbb{V}_{m, r+1}=\mathbb{V}(\mathfrak{sl}_{r+1}, (c_1 \omega_m, ..., c_n \omega_m), \ell)$ such that $m$ divides $(r+1)$ and $\rk(\mathbb{V}) =1$, then $c_1(\mathbb{V}) = \sum_{i=1}^{\ell} \mathbb{D}_{\mathfrak{sl}_{r+1}, \lambda_i, 1}$ where the weights  $\lambda_i$ are determined by $(c_1, ..., c_n)$. That is, the divisor of a conformal blocks vector bundle of rank one with rectangular weights which divide the rank of the Lie algebra $\mathfrak{sl}_{r+1}$ decomposes into a sum of $\ell$ level one divisors.
%\end{theorem}

We now prove our final result on the decomposition of rank one bundles of type $\mathfrak{sl}_{2m}$ with rectangular weights into effective sums of level one divisors. We use the notation from Theorem \ref{mainIntro} and the vector bundles in Definition \ref{vweightvector}.

\begin{theorem} \label{divisors}
In the case $\rk(\mathbb{V}_m)=1$, we have the following decomposition as a sum of level one divisors
\begin{enumerate}
\item[1. ] \label{divisor1} Content is not maximal:
%\begin{equation}\label{divisor1}
$$c_1(\mathbb{V}_m)= \sum_{i=1}^{2k+1}(\ell-c_i) \cdot \text{c}_1(V_{i, 0}) + \sum_{j=2k+2}^{n}c_j \cdot \text{ c}_1(V_{0, j}).$$
%\end{equation}
\item[2. ] Content is maximal:

\begin{enumerate}
\item If $n$ is odd,
%\begin{equation}\label{divisor2}
$$c_1(\mathbb{V}_m) = (\ell-p)\cdot c_1(V_{n-2, 0}) + (p-c_{n})\cdot c_1(V_{0, n-1}) + (p-c_{n-1})\cdot c_1(V_{0,n}) +(p-c_{n-2}) \cdot c_1(V_{n-2,\{n-1,n\} }).$$ 
%\end{equation}
\item If $n$ is even,
%\begin{equation}\label{divisor3}
$$c_1(\mathbb{V}_m) = (p) \cdot c_1(V_{0, \{n-1, n\}})  + (c_{n-2}-p) \cdot c_1(V_{n-1,0}) + (c_{n-1}-p)\cdot c_1(V_{n-2, 0}) + (c_{n}-p) \cdot c_1(V_{\{n-2, n-1\},n }). $$
%\end{equation}

\end{enumerate} 

\end{enumerate}
%Furthermore, these divisors have the following scaling  behavior, $$c_1(\mathbb{V}_m)=mc_1(\mathbb{V}_1).$$
\end{theorem}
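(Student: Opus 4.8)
The plan is to reduce immediately to the case $m=1$ via Proposition \ref{bgkscaling}: since $c_1(\mathbb{V}_m) = m\cdot c_1(\mathbb{V}_1)$ and (by the same proposition applied to each level one summand, which have rectangular weights that are $0/1$-valued) $c_1(V_{A,B}^{(m)}) = m\cdot c_1(V_{A,B}^{(1)})$, the identities for general $m$ follow from the identities for $m=1$ by multiplying through by $m$. So from now on I work with $\mathbb{V}_1 = \mathbb{V}(\mathfrak{sl}_2, (c_1\omega_1,\dots,c_n\omega_1),\ell)$ and its level one building blocks $V_{A,B} = \mathbb{V}(\mathfrak{sl}_2,\vec v_{A,B},1)$, where $\vec v_{A,B}$ has a $1$ in position $i$ exactly when $i\in\{1,\dots,2k+1\}\setminus A\cup B$.

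The core computational tool will be Fakhruddin's formulas for level one conformal blocks divisors in type A on $\overline{M}_{0,n}$ (these are the explicit expressions that realize the finite generation of $\mathcal{C}(\mathcal{G})$ in \cite{GiansiracusaGibney}, \cite{Fakh}). Each $c_1(V_{A,B})$ is, by those formulas, an explicit effective combination of boundary divisors $\delta_{0,S}$ determined by which marked points carry the weight $\omega_1$. In parallel, the first Chern class $c_1(\mathbb{V}_1)$ for an arbitrary $\mathfrak{sl}_2$ bundle is also computed by Fakhruddin's formula in terms of the $c_i$ and $\ell$. So the strategy is: (i) write out $c_1(\mathbb{V}_1)$ via Fakhruddin; (ii) write out the proposed right-hand side, expanding each $c_1(V_{A,B})$ via Fakhruddin; (iii) check the two boundary-divisor expansions agree coefficient by coefficient. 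Because $\overline{M}_{0,n}$ has the property that a nef class is determined by its intersection with the standard curve classes (F-curves), an equivalent and possibly cleaner route is to verify the identity after pairing with every F-curve; the $\mathfrak{sl}_2$ level $\ell$ intersection numbers with F-curves are given by the multiplicative formula of Fakhruddin in terms of $\dim$ of three-pointed blocks, which for $\mathfrak{sl}_2$ reduce to the classical "triangle inequality / parity" fusion rules already invoked in Lemma \ref{case2leftarrow}.

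The proof then splits along the three cases exactly as in the statement. In the non-maximal case, the claim is that $c_1(\mathbb{V}_1) = \sum_{i=1}^{2k+1}(\ell - c_i)\,c_1(V_{i,0}) + \sum_{j=2k+2}^{n} c_j\, c_1(V_{0,j})$; here I would match coefficients using the bookkeeping that the "forward/reverse fill" picture from Section \ref{rank} already makes transparent — the weight at point $i\le 2k+1$ enters $c_1(\mathbb{V}_1)$ through its deficiency $\ell-c_i$ from a full row, while the weight at point $j\ge 2k+2$ enters through its size $c_j$, and each level one bundle $V_{i,0}$ (resp. $V_{0,j}$) isolates precisely that contribution. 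For the maximal cases one uses Remark \ref{MaxConentRemark}: when the content is maximal, $2k+2=n$ and all but the last two or three weights equal $\ell$, so the sums telescope and the four explicit level one bundles $V_{n-2,0}$, $V_{0,n-1}$, $V_{0,n}$, $V_{n-2,\{n-1,n\}}$ (odd case) resp. their even-case analogues are exactly the ones supported on the three "free" marked points; the coefficients $(\ell-p), (p-c_n), (p-c_{n-1}), (p-c_{n-2})$ (resp. $p, c_{n-2}-p, c_{n-1}-p, c_n-p$) are then pinned down by comparing with the three-point fusion rule for $\mathfrak{sl}_2$ at level $p$ on $\tilde\varrho$ that appeared in Lemma \ref{case2leftarrow}. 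I expect the main obstacle to be purely organizational rather than conceptual: carrying out the coefficient matching uniformly across the boundary divisors $\delta_{0,S}$ for all $S$ at once, since Fakhruddin's level one formula has a case distinction based on the size of $S\cap\{\text{points with weight }\omega_1\}$ modulo the relevant parity, and one must check the many overlapping sub-cases line up — in practice I would handle this by pairing against F-curves instead, where the $\mathfrak{sl}_2$ three-point ranks are $0$ or $1$ and the verification becomes a short case check on how an F-curve distributes the weights among its four legs. The effectivity of all coefficients is immediate once the identity is established, since $c_i\le\ell$ gives $\ell-c_i\ge 0$, and in the maximal cases $c_n\ge p$ together with $c_i\le\ell$ and Remark \ref{MaxConentRemark} give the remaining nonnegativity.
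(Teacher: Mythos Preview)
Your approach is genuinely different from the paper's. You propose a direct verification: expand both sides via Fakhruddin's explicit Chern class formulas (or equivalently pair with all F-curves) and match coefficients. The paper instead argues by induction on the level $\ell$, peeling off one column of the unique Young tableau at a time. The key inputs are Lemma \ref{removecolumn} (removing the last column of the unique tableau on $\varrho$ yields the unique tableau on the shape with one fewer column, hence by Lemma \ref{numberyoungtab} a rank one bundle at level $\ell-1$) and Proposition~1.2 of \cite{bgm}, which gives the additivity
\[
c_1(\mathbb{V}_1)=c_1\bigl(\mathbb{V}(\mathfrak{sl}_2,\vec\lambda^{-c},\ell-1)\bigr)+c_1(V_{i_\ell,0})
\]
once both summands are rank one. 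Iterating, each column of the unique tableau contributes exactly one level one divisor, and Section \ref{columncontent} identifies which one by reading off the single missing flavor (for columns $j>p$) or the single extra flavor present in the bottom box (for columns $j\le p$). This immediately explains the coefficients: flavor $i\le 2k+1$ is absent from exactly $\ell-c_i$ of the columns, and flavor $j\ge 2k+2$ sits in exactly $c_j$ of the columns.

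Your route is in principle valid and has the virtue of being self-contained (no appeal to \cite{bgm}), but you have not actually carried out the matching, and the F-curve verification will require a real case analysis on how a four-part partition of $\{1,\dots,n\}$ interacts with the split $\{1,\dots,2k+1\}$ versus $\{2k+2,\dots,n\}$ and with the attached $\mathfrak{sl}_2$ fusion rules at level $\ell$ --- likely more than the ``short case check'' you anticipate, especially in the non-maximal case where all $n$ weights are in play. The paper's approach buys a transparent structural explanation (the decomposition \emph{is} the column decomposition of the unique tableau) and avoids any direct Chern class computation; yours, if completed, would be an independent check that bypasses the column-removal lemma entirely.
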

First, we show Lemma \ref{removecolumn} to describe how to decompose the weights of $c_1(\mathbb{V}_m)$. We then describe the content in the columns of the unique tableau from Lemma \ref{sumequalp} (the case with maximal content follows similarly). Using these results and Proposition 1.2 from \cite{bgm}, we conclude the theorem statement.

\subsection{Lemma for decomposing divisors of rank one $\mathfrak{sl}_{2m}$ bundles with rectangular weights}
As is standard, we denote the transpose of a diagram $\varrho$ as $\tilde{\varrho}= (\tilde{\varrho}^{(1)}, ..., \tilde{\varrho}^{(\ell)})$. With this notation, $\tilde{\varrho}^{(i)}$ is the number of boxes in the $i^{th}$ column of $\varrho$. 

\begin{lemma}\label{removecolumn}
If $T$ is the only possible Young tableau with content $\mu= (c_1, ..., c_n)$ on shape $\tilde{\varrho}=((2k+2)^q, 2k^{m})$ for some integers $k, q, m \geq 0$, then the tableau obtained from simply removing the final column of $T$, denote this tableau $T^{-c}$,  is the only tableau with shape $\tilde{\varrho}^{-c}= ((2k+2)^{q}, 2k^{m-1})$ and content $\mu^{-c}=(c'_1, ..., c'_n)$ obtained from content $\mu$ by removing the content flavors from the final column of $T$. 
\end{lemma}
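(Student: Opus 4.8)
The statement is really a uniqueness-transfer result: the hypothesis is that $T$ is \emph{the} unique semistandard tableau of shape $\tilde\varrho=((2k+2)^q,(2k)^m)$ and content $\mu$, and we want to deduce that $T^{-c}$ is the unique tableau of shape $\tilde\varrho^{-c}=((2k+2)^q,(2k)^{m-1})$ with the correspondingly reduced content $\mu^{-c}$. I would argue by contradiction. Suppose there were a second tableau $S\neq T^{-c}$ of shape $\tilde\varrho^{-c}$ with content $\mu^{-c}$. The goal is to glue back the removed column to produce a second tableau of shape $\tilde\varrho$ and content $\mu$, contradicting the uniqueness of $T$.

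\textbf{Key steps.} First I would record exactly what the final column of $T$ looks like: since $\tilde\varrho$ has columns of heights $2k+2$ (the first $q$) and $2k$ (the last $m$), the final column has height $2k$ and consists of $2k$ strictly increasing flavor values $i_1<i_2<\dots<i_{2k}$ read top to bottom; call this column word $w$. Removing it deletes exactly one box of each of those $2k$ flavors, which is precisely how $\mu^{-c}$ is defined from $\mu$. Second, I would observe that in \emph{any} tableau of shape $\tilde\varrho$ with content $\mu$, the last column is forced to equal $w$: indeed, $T$ being the unique such tableau means in particular its last column is determined, but more to the point I want the stronger combinatorial fact that the last column is forced by the content and the shape of the rest — here is where the structure of $\varrho=(\ell^{2k},p^2)$ (equivalently $\tilde\varrho$ having only column-heights $2k+2$ and $2k$) is used, via the column-strictness and the weak increase along rows forcing the tail flavors into the low rows, exactly as in the proofs of Lemma \ref{sumequalp} and Lemma \ref{sizeoflowrow}. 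Third, given the hypothetical second tableau $S$ of the truncated shape, I append a new last column to $S$ using the word $w$: this yields a filling $\hat S$ of shape $\tilde\varrho$ with content $\mu$. I must check $\hat S$ is semistandard — columns strict (automatic within the old columns; the new column is strict since $w$ is; and the join between the second-to-last column of $S$ and the new column needs the entries of $w$ to dominate entrywise the entries of the previous column, which follows because $w$ is the \emph{largest} admissible column that can sit there, again by the low-row analysis) — and rows weakly increasing (the new column's entries must be $\geq$ the entry immediately to their left in $S$; this is the delicate point). Then $\hat S$ is a tableau of shape $\tilde\varrho$, content $\mu$, and $\hat S\neq T$ because $S\neq T^{-c}$; this contradicts uniqueness of $T$, so no such $S$ exists. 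Existence of $T^{-c}$ is immediate since it is literally a sub-tableau of the semistandard $T$.

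\textbf{Main obstacle.} The crux is showing that appending the word $w$ to an \emph{arbitrary} truncated tableau $S$ (not just to $T^{-c}$) still produces a valid semistandard tableau — i.e.\ that the last column is genuinely ``free'' to be $w$ regardless of how the first $\ell-1$ (resp.\ $2k+1$, matching $\tilde\varrho^{-c}$) columns were filled. This is where I expect to lean hardest on the rigid structure established earlier: because all column heights are either $2k+2$ or $2k$ and the content is weakly decreasing with each $c_i\le\ell$, the low-row analysis of Lemma \ref{sizeoflowrow} pins down which flavors can occupy the bottom $2k$ rows, and in fact the multiset of flavors available for the final column — and their forced strictly-increasing order — is independent of the arrangement to its left. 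Put differently, the content flavors whose total multiplicity would overflow into the last column are exactly those appearing in $w$, and column-strictness forces them into the unique order $i_1<\dots<i_{2k}$; weak row-increase to the left is then automatic because those are the largest flavors present in their respective rows. If this independence fails in some edge configuration, the fallback is to invoke uniqueness of $T$ more directly: the last column of every tableau of shape $\tilde\varrho$, content $\mu$ must agree with that of $T$ (else we would already have two such tableaux), and then truncation and re-appending are mutually inverse bijections between $\{\text{tableaux of shape }\tilde\varrho\text{, content }\mu\}$ and $\{\text{tableaux of shape }\tilde\varrho^{-c}\text{, content }\mu^{-c}\}$, transporting the singleton on the left to a singleton on the right.
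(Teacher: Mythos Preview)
Your overall strategy---assume a second tableau $S\neq T^{-c}$ of the truncated shape and content, glue the deleted column $w$ back on, and exhibit a second tableau of the full shape contradicting uniqueness of $T$---is exactly the paper's approach. You also correctly isolate the one nontrivial step: verifying that appending $w$ to an \emph{arbitrary} such $S$ yields a semistandard filling.

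The gap is that neither of your two proposed resolutions of this step actually closes it. The ``fallback'' bijection argument is circular: truncation is visibly injective (there is only the single tableau $T$ to truncate, and its last column is $w$), but \emph{surjectivity} of truncation onto the set of tableaux of shape $\tilde\varrho^{-c}$ and content $\mu^{-c}$ is precisely the assertion that every such $S$ admits $w$ as a valid rightmost column---which is what you are trying to prove. Knowing that every full-shape tableau has last column $w$ gives you nothing here beyond the triviality that $T$ has last column $w$. Your primary suggestion, to ``lean on the low-row analysis of Lemma~\ref{sizeoflowrow},'' asserts that $w$ records the largest flavor in each row of $S$; but Lemma~\ref{sizeoflowrow} and the surrounding analysis describe a \emph{particular} filling procedure on $\varrho$, not an a~priori constraint on arbitrary semistandard fillings of the truncated shape, so this does not establish $s_x\le w_x$ for the last-column entries $s_x$ of $S$.

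The paper resolves the obstacle by taking the contrapositive at this exact point. If appending $w$ to $T'$ (your $S$) fails to be semistandard, then some row-comparison $t'_x>w_x$ fails at the interface; since appending $w$ to $T^{-c}$ \emph{does} succeed (it reconstructs $T$), this forces $T'$ and $T^{-c}$ to differ in their last column. The paper then runs a counting argument comparing, for the two tableaux, the distribution of flavors above and below the box where they differ, and derives a contradiction with the fact that $T'$ and $T^{-c}$ share the same content $\mu^{-c}$. That counting step---not a structural claim about low-rows---is what is doing the work, and it is the piece your proposal is missing.
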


\begin{proof}
Suppose $T$ is the only Young tableau one shape $\tilde{\varrho}$ and content $\mu$ and $T^{-c}$ is the tableau described in the lemma statement. Suppose $T'$ is a Young tableau on shape $\tilde{\varrho}^{-c}$ with content $\mu^{-c}$ different than the tableau $T^{-c}$.

Since $T$ was the only possible Young tableau on $\varrho$ with content $\mu$, if we were to concatenate the removed column of $T$ to the largest column of $T'$ we should not have a Young tableau (as we have assumed $T$ is the only such and $T' \neq T^{-c}$). There must be some content flavor $i$ in the final column of $T'$ which is larger than the content to the right of this row in the removed column (making the concatenated tableau not have weakly increasing property in rows). 

We can thus assume that $T'$ and $T^{-c}$ differ by a flavor in their final $m-1$ column. Denote the box of $\varrho^{-c}$ containing this differing content $B_{x,m-1}$. We can assume that the content flavor in $B_{x,m}$ of $T'$ is $i$ and in $B_{x,m-1}$ of $T^{-c}$ is $i-1$. Now, both $T'$ and $T^{-c}$ have the same content and number of boxes and particularly, the number of boxes larger than $B_{x,m-1}$ is the same. Furthermore, the boxes of $T'$ and $T^{-c}$ larger than $B_{x,m-1}$ must contain content of flavor $i$ or larger and content of flavor $i-1$ or larger respectively. Similarly, the boxes of $T'$ and $T^{-c}$ smaller than $B_{x,m-1}$ must contain content of flavor $i$ or smaller and content of flavor $i-1$ or smaller respectively. The size of content with flavor $i$ or larger in $T'$ is thus more than the amount of content flavors $i -1$ or larger of $T^{-c}$ (including the content of flavor $i$ in $B_{x,m-1}$ of $T'$). This contradicts the two tableau $T'$ and $T^{-c}$ having the same content. It must be the case that $T^{-c}$ is the only Young tableau with the given conditions. 

\end{proof}

\subsection{Description of column weight content when $\Lambda =p$}\label{columncontent}

Consider the $j^{th}$ column of the Young tableau constructed in Lemma \ref{sumequalp}. 

If $p < j \leq \ell$, then this column contains $2k$ boxes with content from flavors $1$ to $2k+1$ which are strictly decreasing. There is at most one occurrence between consecutive rows in this column where the content flavors increase by two and all other content flavors increase by exactly one. Denote $i_j$ the flavor missing by this increase of size two from column $j$. The transpose of column $j$ has the following form. 
\tiny
$$
\ytableausetup
{mathmode, boxsize=3em}
  \begin{ytableau}
\text{1} & \text{2} & \none[\dots] & i_j-1 & i_j+1 & \none[\dots] & 2k
\end{ytableau}.
$$
\normalsize
\noindent The weight vector given by these flavors is $\vec{v}_{i_j, 0}$ (see Def. \ref{vweightvector}).
.

Now suppose $1 \leq j \leq p$, then the $j^{th}$ column contains $2k+2$ boxes such that the content in the first $2k+1$ boxes decrease by one in value across each row and the final box contains content of flavor from $\{2k+2, ..., n\}$. Let $i_j$ denote this flavor. The weight vector given by the flavors in this column is given by $\vec{v}_{0, i_j}$ (see Def. \ref{vweightvector}).  

%Save for thesis extra description of what columns look like---------------------
%The transpose of column $j$ has the following form:
%$$
%\ytableausetup
%{mathmode, boxsize=3em}
%  \begin{ytableau}
%\text{1} & \text{2} & \none[\dots] & \text{2k} & 2k+1 & i_j
%\end{ytableau}.
%$$

%Save for Thesis ------------------
%For example, consider the conformal block, $V = \mathbb{V}(\mathfrak{sl}_2, (9,8,8,8,8,8,8, 2,1), 9)$. In this case, $\ell= 9$, $k=3$, $p=3$. For each $j$ column, if $j = 4, ..., 9$, the content $i_j-1$ and $i_j+2$ is highlighted in yellow and for columns $j=1, 2, 3$ the content $i_j$ is highlighted in red. In this example, $i_1=8, i_2=8, i_3=9, i_4=7, i_5=6, i_6=5, i_7=4, i_8=3,$ and $i_9 = 2$. 
%
%$$
%\ytableausetup
%{mathmode, boxsize=1em}
%  \begin{ytableau}
%\text{1} & \text{1} & \text{1} & \text{1} & \text{1} & \text{1} & \text{1} & \text{1} & *(yellow) \text{1} \\
%\text{2} & \text{2} & \text{2} & \text{2} & \text{2} & \text{2} & \text{2} & *(yellow)\text{2} &*(yellow) \text{3} \\
%\text{3} & \text{3} & \text{3} & \text{3} & \text{3} & \text{3} & *(yellow)\text{3} & *(yellow)\text{4} & \text{4} \\
%\text{4} & \text{4} & \text{4} & \text{5} & \text{4} & *(yellow)\text{4} & *(yellow)\text{5} & \text{5} & \text{5} \\
%\text{5} & \text{5} & \text{5} & \text{5} & *(yellow)\text{5} & *(yellow)\text{6} & \text{6} & \text{6} & \text{6} \\
%\text{6} & \text{6} & \text{6} & *(yellow)\text{6} & *(yellow)\text{7} & \text{7} & \text{7} & \text{7} & \text{7} \\
% \text{7} & \text{7} & \text{7} \\
% *(red)\text{8} & *(red)\text{8} & *(red) \text{9} 
%\end{ytableau} 
%$$
%

\subsection{Proof of divisor decomposition}
We now prove our result on the decomposition.

\begin{proof}[Proof of Theorem \ref{divisors}]
Let $\mathbb{V}_1$ be a rank one bundle with $\Lambda =p$. As in the statement of Lemma \ref{removecolumn}, let $\vec{\lambda}^{-c}$ be the weight vector obtained by removing the content from $\vec{\lambda}$ in the final column of the tableau produced in Lemma \ref{sumequalp}. Lemma \ref{removecolumn} and Lemma \ref{numberyoungtab} implies $$\rk(\mathbb{V}(\mathfrak{sl}_2, \vec{\lambda}^{-c}, \ell-1)) = 1.$$ Furthermore, since the following is at level one (recall \cite[5.2.1]{Fakh}), we have $$\rk(\mathbb{V}(\mathfrak{sl_2}, \vec{v}_{i_{\ell},0}, 1)) =1,$$ where $\vec{v}_{i_{\ell},0}$ is the weight vector associated to column $\ell$. 

By Proposition 1.2 in \cite{bgm}, we have:
$$c_1(\mathbb{V}_1) = c_1(\mathbb{V}(\mathfrak{sl}_2, \vec{\lambda}^{-c}, \ell-1) + c_1(V_{i_{\ell}, 0}).$$   

We carry out this same process now with the rank one conformal blocks vector bundle $\mathbb{V}(\mathfrak{sl}_2, \vec{\lambda}^{-c}, \ell-1)$. This process continues until we have completely decomposed the original divisors into a sum of level one divisors. Observing the behavior of the weight content in column $j$ with $j \leq p$ and $ p < j$ of the tableau produced in Lemma \ref{sumequalp} allows one to conclude the description of (1) in Theorem \ref{divisor1}.

\end{proof}

\begin{example}
We demonstrate the decomposition of $c_1(V)$ as in Theorem \ref{divisors} with $V = \mathbb{V}(\mathfrak{sl}_2, (9,8,8,8,8,8,8, 2,1), 9)$. We have $|\vec{\lambda}| = 2((3)9+3)$, $k=3$, and $p=3$. As $\sum_{i=2(3)+1}^{10} c_i = 2 +1 = 3 = p$ Theorem \ref{mainIntro} implies that indeed $\rk(V) = 1$.

The unique Young tableau from Lemma \ref{sumequalp} is the following: 
$$
\ytableausetup
{mathmode, boxsize=1em}
  \begin{ytableau}
\text{1} & \text{1} & \text{1} & \text{1} & \text{1} & \text{1} & \text{1} & \text{1} & \text{1} \\
\text{2} & \text{2} & \text{2} & \text{2} & \text{2} & \text{2} & \text{2} & \text{2} & \text{3} \\
\text{3} & \text{3} & \text{3} & \text{3} & \text{3} & \text{3} & \text{3} & \text{4} & \text{4} \\
\text{4} & \text{4} & \text{4} & \text{5} & \text{4} & \text{4} & \text{5} & \text{5} & \text{5} \\
\text{5} & \text{5} & \text{5} & \text{5} & \text{5} & \text{6} & \text{6} & \text{6} & \text{6} \\
\text{6} & \text{6} & \text{6} & \text{6} & \text{7} & \text{7} & \text{7} & \text{7} & \text{7} \\
 \text{7} & \text{7} & \text{7} \\
 \text{8} & \text{8} & \text{9} 
\end{ytableau} 
$$

Each column of this Young tableau gives the weight content associated to the Young tableau for the divisors of level one in the decomposition as in Theorem \ref{divisors}. The content $i_j$ (or those content $i_j-1$ and $i_j+1$) associated to column $j$ is highlighted in each column (Section \ref{columncontent}).

$$
\ytableausetup
{mathmode, boxsize=1em}
  \begin{ytableau}
\text{1} & \none & \text{1} & \none & \text{1} & \none & \text{1} &\none &  \text{1} & \none & \text{1} & \none & \text{1} &\none &  \text{1} &\none & *(yellow) \text{1} \\
\text{2} & \none & \text{2} & \none & \text{2} & \none & \text{2} & \none & \text{2} & \none & \text{2} & \none & \text{2} & \none & *(yellow)\text{2} & \none & *(yellow)\text{3} \\
\text{3} & \none & \text{3} & \none & \text{3} & \none & \text{3} & \none & \text{3} &\none &  \text{3} &\none &  *(yellow) \text{3} & \none & *(yellow)\text{4} & \none & \text{4} \\
\text{4} & \none & \text{4} &\none &  \text{4} & \none & \text{5} & \none & \text{4} & \none & *(yellow) \text{4} & \none & *(yellow)\text{5} &\none &  \text{5} & \none & \text{5} \\
\text{5} & \none & \text{5} &\none &  \text{5} &\none &  \text{5} &\none &  *(yellow) \text{5} & \none & *(yellow)\text{6} & \none & \text{6} & \none & \text{6} & \none & \text{6} \\
\text{6} &\none &  \text{6} & \none & \text{6} & \none & *(yellow) \text{6} & \none &  *(yellow) \text{7} & \none &  \text{7} & \none &  \text{7} & \none & \text{7} & \none & \text{7} \\
 \text{7} & \none & \text{7} & \none & \text{7} \\
 *(yellow)\text{8} &\none & *(yellow) \text{8} & \none &  *(yellow)\text{9} 
\end{ytableau} 
$$

From this, we see that the divisor decomposes as $$c_1(V) = c_1(V_{2,0}) + c_1(V_{3,0}) + c_1(V_{4,0}) + c_1(V_{5,0})  + c_1(V_{6,0})  + c_1(V_{7,0}) + 2(c_1(V_{0, 8})) + c_1(V_{0, 9}) .$$ 
\end{example}

  \section{Rank one vertical scaling}\label{scalingproof}
  If $\mathbb{V}=\mathbb{V}(\sL_{r+1},\vec{\lambda},\ell)$ is a bundle of rank one on $\overline{\operatorname{M}}_{0,n}$, then by a quantum generalization  of a conjecture of Fulton (see \cite{BGMB}),  bundles $\Bbb{V}[m]=\mathbb{V}(\sL_{r+1},m\vec{\lambda},m\ell)$ obtained by scaling Lie data horizontally (see Def~\ref{Stetching}), are also rank one for all $m$. Moreover, the first Chern classes are related by the identity 
  $c_1(\mathbb{V}[m])=m \ c_1(\mathbb{V})$ \cite{GiansiracusaGibney}.

 Belkale, Gibney and Kazanova  have shown that analagous rank and Chern class scaling symmetries hold when the data defining $\mathbb{V}$ is scaled vertically.   The precise statement and proof, communicated by them, are given in Proposition \ref{GITScaling}.
 
 \begin{definition}\label{Stetching}
For $\lambda_i=\sum_{j=1}^rc_j \omega_j  \in P_{\ell}(\sL_{r+1})$, and $m\in \mathbb{N}$, set $m\lambda_i=\sum_{j=1}^r (m  c_j ) \omega_j \in P_{m \ell}(\sL_{r+1})$.
 Given $\mathbb{V}=\mathbb{V}(\sL_{r+1},\vec{\lambda},\ell)$, set
$\Bbb{V}[m]=\mathbb{V}(\sL_{r+1},m\vec{\lambda}, m \ell)$, where $m\vec{\lambda}=(m\lambda_1,\ldots,m\lambda_n)$.
\end{definition}

 \begin{proposition}\label{GITScaling}[BGK] If $\operatorname{rank}\mathbb{V} =1$, then $\operatorname{rk}\mathbb{V}_m =1$, and
 $c_1(\mathbb{V}_m)=m \ c_1(\mathbb{V})$
\end{proposition}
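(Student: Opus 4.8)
The plan is to reduce vertical scaling to the horizontal scaling of Definition~\ref{Stetching}, which is precisely why the argument ``relies partially'' on the quantum version of Fulton's conjecture \cite{BGMB}. Since conformal block bundles are locally free, $\operatorname{rank}\mathbb V$ and $\operatorname{rank}\mathbb V_m$ equal the dimensions of the spaces of conformal blocks at a single $n$-pointed $\mathbb P^1$, so one may argue fibrewise; and these dimensions carry an invariant-theoretic meaning, being dimensions of spaces of global sections of a parabolic ``theta'' line bundle on a moduli space of quasi-parabolic $SL$-bundles (Pauly, Laszlo--Sorger).

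The first step is to apply $\mathfrak{sl}$ rank--level duality, which identifies $\dim\mathbb V(\mathfrak{sl}_{r+1},\vec\lambda,\ell)$ with $\dim\mathbb V(\mathfrak{sl}_{\ell},\vec\lambda^{T},r+1)$, where $\vec\lambda^{T}$ is the tuple of transposed Young diagrams and the roles of rank and level have been exchanged. The combinatorial heart of the reduction is that transposition interchanges the two scalings: the vertical scaling $\omega_j\mapsto\omega_{mj}$ (level fixed) of the $\mathfrak{sl}_{r+1}$-data corresponds on the dual side exactly to multiplying every fundamental-weight coefficient of $\vec\lambda^{T}$ by $m$ and the level by $m$, i.e.\ to the horizontal scaling $\vec\lambda^{T}\mapsto m\vec\lambda^{T}$ of Definition~\ref{Stetching} for $\mathfrak{sl}_{\ell}$. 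Hence, if $\operatorname{rank}\mathbb V=1$, then $\mathbb V(\mathfrak{sl}_{\ell},\vec\lambda^{T},r+1)$ has rank one, so $\mathbb V(\mathfrak{sl}_{\ell},m\vec\lambda^{T},m(r+1))$ has rank one by \cite{BGMB}, and applying rank--level duality once more (now $\mathfrak{sl}_{\ell}$ at level $m(r+1)$ pairing with $\mathfrak{sl}_{m(r+1)}$ at level $\ell$) yields $\operatorname{rank}\mathbb V_m=1$; since duality is an equality of dimensions, both the lower and upper bounds on the rank come for free. The same geometry is visible directly: vertical scaling is realised by the morphism of parabolic moduli $(E;F^{(1)},\dots,F^{(n)})\mapsto(E\oplus\mathcal O^{(m-1)(r+1)};F^{(i)}\oplus W^{(i)})$ for fixed auxiliary subspaces $W^{(i)}$, which has Dynkin index one and so pulls the level-$\ell$ theta bundle back to the level-$\ell$ theta bundle --- but it is the duality step that converts this into the needed statement about dimensions.

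For the Chern class, $\mathbb V$ and $\mathbb V_m$ are now line bundles on $\overline{M}_{0,n}$, and I would again propagate the identity through the reduction: the horizontal scaling identity $c_1(\mathbb V[m])=m\,c_1(\mathbb V)$ of \cite{GiansiracusaGibney}, together with the known effect of $\mathfrak{sl}$ rank--level duality on first Chern classes (which modifies $-c_1$ by an explicit correction divisor depending on the weights), should give $c_1(\mathbb V_m)=m\,c_1(\mathbb V)$ once the correction terms are accounted for. A self-contained alternative is to intersect with $F$-curves, use factorisation along boundary divisors to reduce to ranks of three-pointed conformal blocks --- which, by the induction on $n$ used for the rank statement, are the vertical scalings of those for $\mathbb V$ --- and then verify that the surviving conformal-weight prefactors combine to the factor $m$.

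The step I expect to be the main obstacle is this Chern class computation, because it does \emph{not} reduce to a term-by-term scaling: the conformal weight of $\omega_{mj}$ at level $\ell$ for $\mathfrak{sl}_{m(r+1)}$ is not $m$ times that of $\omega_j$ at level $\ell$ for $\mathfrak{sl}_{r+1}$ (the dual Coxeter number changes), so the factor $m$ can only appear after summing over the fusion decomposition, and isolating this cancellation --- equivalently, pinning down the rank--level correction divisor above --- is the real content. A secondary point to verify carefully is the bookkeeping of ranks and levels in the two applications of rank--level duality, so that the vertically scaled bundle genuinely sits at level $\ell$ rather than $m\ell$; if instead one uses the direct moduli morphism, the burden shifts to proving that restriction of global sections of the theta bundle along it is injective, i.e.\ that its image is not contained in the zero locus of any nonzero section.
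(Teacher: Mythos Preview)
Your rank argument is exactly the paper's: apply strange duality to $\mathbb V$, observe that transposition swaps vertical and horizontal scaling, invoke the quantum Fulton conjecture of \cite{BGMB} on the dual side, then apply strange duality back. (The paper mentions the cyclic twist $\widetilde{\lambda^T}_n$ on one weight, which you gloss over, but this is bookkeeping.)

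For the Chern class your option (b) --- intersect with $F$-curves and track the conformal weights through Fakhruddin's degree formula --- is precisely the paper's route, and your diagnosis of the obstacle is accurate: the Casimir numbers do not scale by $m$ term-by-term. The paper resolves this with two ingredients you have not isolated. First, it applies \emph{horizontal} scaling to the already vertically scaled four-point bundle, so that one computes $\deg\mathbb V_m$ as $\tfrac{1}{m}\deg\mathbb V(\mathfrak{sl}_{m(r+1)},(m\lambda_1(m),\ldots,m\lambda_4(m)),m\ell)$ via \cite{GiansiracusaGibney}. Second, the Casimir identity
\[
c_{m(r+1)}\bigl(m\lambda(m)\bigr)=m^{3}\,c_{r+1}(\lambda)
\]
(immediate from the explicit formula \eqref{mir}) then makes every summand in Fakhruddin's formula scale by the same power of $m$; after dividing by the denominator $2(m(r+1)+m\ell)=2m(r+1+\ell)$ and the extra $1/m$ from the horizontal step, exactly one factor of $m$ survives. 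The rank-one hypothesis is used to guarantee that the set of $\lambda$ contributing to the factorisation sum for $\mathbb V_m$ is exactly $\{\lambda(m):\lambda\text{ contributes for }\mathbb V\}$, so the sums match term-for-term.

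Your option (a), pushing the Chern-class identity through rank--level duality and its correction divisor, is a genuinely different route from what the paper does; it may well work, but the paper does not attempt it.
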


\begin{proof}
The second part of Prop \ref{GITScaling}, will follow from Lemma \ref{4PointGITScaling2}, which will imply that  the divisors $m \ c_1\mathbb{V}$ and $c_1\mathbb{V}_m$ intersect every F-Curve in the same degree.

If $\mathbb{V}$  has rank one, then by taking its strange dual partner, one gets the rank one bundle
$\mathbb{V}(\sL_{\ell}, (\lambda^T_{1},\ldots,\lambda^T_{n-1}, \widetilde{\lambda^{T}}_n), r+1)$, where  $ \widetilde{\lambda^{T}}_n$ accounts a necessary cyclic twisting.   Now by Fulton's Conjecture, $\mathbb{V}(\sL_{\ell}, m\lambda^T_{1},\ldots,m\lambda^T_{n-1}), m\widetilde{\lambda^{T}}_n,m(r+1))$
also has rank one.  By again applying strange duality to this bundle, one obtains 
$\mathbb{V}(\sL_{m(r+1)}, (\lambda_{m(1)},\ldots, \lambda_{m(n)}), \ell)$ which therefore has rank one. 

\end{proof}

\begin{lemma}\label{4PointGITScaling2}[BGK] If $\operatorname{rank}\mathbb{V}(\sL_{m(r+1)},(\lambda_1(m),\ldots,\lambda_4(m)),\ell) =1$, then
$$m \ \operatorname{deg}\mathbb{V}(\sL_{r+1},(\lambda_1,\ldots, \lambda_4),\ell)
= \operatorname{deg}\mathbb{V}(\sL_{m(r+1)},(\lambda_1(m),\ldots,\lambda_4(m)),\ell).$$

\end{lemma}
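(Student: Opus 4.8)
The plan is to evaluate both sides of the claimed identity directly on $\overline{\operatorname{M}}_{0,4}\cong \Pro^1$, where the degree of a conformal blocks bundle is a single integer, computable by a rank formula. I would first use the fact that for four points the relevant moduli space is a curve, so $\deg\mathbb{V}(\sL_{r+1},(\lambda_1,\ldots,\lambda_4),\ell)$ is given by Fakhruddin's formula expressing it in terms of ranks of three-pointed bundles (the first Chern class on $\overline{\operatorname{M}}_{0,4}$ is a sum of boundary contributions, each a product of ranks over the node). Concretely, $\deg\mathbb{V}$ is a finite sum, over a complete set of dominant weights $\mu$ at level $\ell$ and over the three boundary points, of terms of the shape $\rk\mathbb{V}(\sL_{r+1},(\lambda_i,\lambda_j,\mu),\ell)\cdot\rk\mathbb{V}(\sL_{r+1},(\lambda_k,\lambda_l,\mu^*),\ell)$ weighted by the conformal weights $\Delta$.

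**Reducing to three-point ranks under scaling.** The heart of the argument will be to show that each three-point rank appearing in the formula for $\mathbb{V}(\sL_{m(r+1)},(\lambda_1(m),\ldots,\lambda_4(m)),\ell)$ matches the corresponding one for $\mathbb{V}(\sL_{r+1},(\lambda_1,\ldots,\lambda_4),\ell)$, and that the conformal-weight factors scale by exactly $m$. For three-point bundles the rank is a fusion coefficient / dimension of a space of invariants; under the strange duality correspondence used in the proof of Proposition \ref{GITScaling}, scaling $\sL_{r+1}\rightsquigarrow\sL_{m(r+1)}$ at fixed level $\ell$ corresponds on the dual side to scaling the level $r+1\rightsquigarrow m(r+1)$ at fixed rank, together with the horizontal weight-scaling $\lambda^T\mapsto m\lambda^T$ of Definition \ref{Stetching}; Fulton's conjecture (the quantum generalization cited via \cite{BGMB}) then forces these three-point ranks to be preserved. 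The hypothesis $\rk\mathbb{V}(\sL_{m(r+1)},(\lambda_1(m),\ldots,\lambda_4(m)),\ell)=1$ is what lets us invoke the rank-one case of Fulton's conjecture cleanly, controlling the intermediate weights $\mu$ that can contribute.

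**Handling the conformal weights.** Separately I would check the scaling of the Casimir/conformal weight $\Delta_{\lambda}$ entering Fakhruddin's degree formula: $\Delta_{m\lambda}$ at level $m\ell$ for $\sL_{m(r+1)}$ equals $m\,\Delta_{\lambda}$ at level $\ell$ for $\sL_{r+1}$, since the Casimir eigenvalue is quadratic in the weight while the normalizing factor $(\text{level}+\text{dual Coxeter number})$ also scales by $m$ — the two scalings combine to give an overall factor $m$. Combining the preservation of the three-point ranks with this factor-$m$ scaling of every $\Delta$ term yields $\deg\mathbb{V}(\sL_{m(r+1)},(\lambda_1(m),\ldots,\lambda_4(m)),\ell)=m\,\deg\mathbb{V}(\sL_{r+1},(\lambda_1,\ldots,\lambda_4),\ell)$, which is the assertion.

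**Main obstacle.** I expect the delicate point to be controlling which intermediate weights $\mu$ actually appear in the propagation formula after scaling: a priori the level-$\ell$ weight lattice for $\sL_{m(r+1)}$ is much larger than for $\sL_{r+1}$, so one must argue that the rank-one hypothesis, via strange duality and Fulton's conjecture, confines the nonzero contributions to (scaled images of) the weights that already occurred for $\mathbb{V}(\sL_{r+1},\vec\lambda,\ell)$, with multiplicities preserved. Making that bijection of contributing channels precise — rather than merely matching the two totals abstractly — is where the real work lies, and it is essentially the four-point incarnation of the strange-duality plus Fulton argument sketched in the proof of Proposition \ref{GITScaling}.
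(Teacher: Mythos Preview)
Your proposal shares the paper's framework --- Fakhruddin's degree formula on $\overline{\operatorname{M}}_{0,4}$, matching three-point ranks via strange duality plus Fulton, and tracking Casimir factors --- and you correctly isolate the real difficulty (controlling which intermediate weights $\mu$ contribute after vertical scaling; the paper handles this only tersely, by asserting that the ``restriction data scales'').

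The genuine gap is in your Casimir/conformal-weight bookkeeping. Under the vertical scaling $\lambda\mapsto\lambda(m)$ at \emph{fixed} level $\ell$, the Casimir numbers do \emph{not} scale uniformly: in the explicit formula \eqref{mir} the terms quadratic in the $c_i$ pick up a factor $m$, while the linear term picks up $m^2$, and the denominator $2(\ell+m(r+1))$ is not a multiple of $2(\ell+r+1)$. So the assertion ``$\Delta$ scales by $m$'' is false as stated, and a direct term-by-term comparison of the two Fakhruddin formulas at level $\ell$ does not yield a clean overall factor.

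The paper repairs this by inserting a horizontal scaling step first: using the known rank-one degree identity $c_1(\mathbb{V}[m])=m\,c_1(\mathbb{V})$ for horizontal scaling, one rewrites
\[
\deg\mathbb{V}(\sL_{m(r+1)},(\lambda_i(m)),\ell)\;=\;\tfrac{1}{m}\,\deg\mathbb{V}(\sL_{m(r+1)},(m\lambda_i(m)),m\ell).
\]
Now the combined weight $m\lambda(m)$ satisfies the clean relation $c_{m(r+1)}(m\lambda(m))=m^{3}c_{r+1}(\lambda)$, and the denominator becomes $2(m(r+1)+m\ell)=2m(r+1+\ell)$; the factors $\tfrac{1}{m}\cdot\tfrac{m^{3}}{m}=m$ give the claim. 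Your sketch gestures toward this when you write ``$\Delta_{m\lambda}$ at level $m\ell$'', but you never explain why level $m\ell$ appears (the lemma is stated at level $\ell$), you do not invoke the horizontal degree identity that gets you there, and your asserted scaling factor is wrong: ``Casimir quadratic in the weight, denominator scales by $m$'' would give $m^{2}/m=m$ for horizontal scaling alone, but the combined horizontal-plus-vertical stretch of the Casimir is $m^{3}$, not $m^{2}$ --- the extra $m$ comes from the linear $(\lambda,\rho)$ term under the vertical stretch, which you overlooked.
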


%To change spacing in equation
\setlength\abovedisplayskip{5pt plus 2pt minus 2pt}
\begin{proof}  Assuming that $\operatorname{rk}\mathbb{V}(\sL_{m(r+1)},(\lambda_1(m),\ldots,\lambda_4(m)),\ell)=1$ for all $m \ge 1$,  we note that
the restriction data which determines $$\operatorname{deg}\mathbb{V}(\sL_{r+1},(\lambda_1,\allowbreak \ldots, \lambda_4),\ell),$$
will scale to give the restriction data that defines $$\operatorname{deg}\mathbb{V}(\sL_{m(r+1)},(\lambda_1(m),\ldots,\lambda_4(m)),\ell).$$

That is, for $\lambda$ in the second part of Fakhruddin's formula
\begin{multline}
\operatorname{deg}\mathbb{V}(\sL_{r+1},(\lambda_1,\ldots,\lambda_4),\ell)\\
= \frac{1}{2(r+1+\ell)}\Big(\sum_{i=1}^4c(\lambda_i)-\sum_{\lambda \in P_{\ell}(\sL_{r+1})}c(\lambda) \prod_{\{abcd\}=\{1234\}}\operatorname{rk}
\mathbb{V}(\sL_{r+1},(\lambda_a,\lambda_b,\lambda),\ell)\operatorname{rk}\mathbb{V}(\sL_{r+1},(\lambda_c,\lambda_d,\lambda^{\star}),\ell)\Big),
\end{multline}

 one just takes $\lambda(m)$ for the corresponding summand in  $\operatorname{deg}\mathbb{V}(\sL_{m(r+1)},(\lambda_1(m),\ldots,\lambda_4(m)),\ell)$. Similarly, with horizontal scaling, the restriction data which determines $\operatorname{deg}\mathbb{V}(\sL_{m(r+1)},(\lambda_1(m),\allowbreak \ldots, \lambda_4(m)),\ell)$,
will scale to give the restriction data that defines $\operatorname{deg}\mathbb{V}(\sL_{m(r+1)},(m\lambda_1(m),\allowbreak \ldots, m\lambda_4(m)),m\ell)$,.

The standard formula for the Casimir numbers for $\mathfrak{sl}_{r+1}$ is 
 \begin{equation}\label{mir}
 c_{r+1}(\lambda)=\frac{1}{r+1}\sum_{i=1}^r(r+1-i)ic_i^2+\frac{2}{r+1}\sum_{1 \le i < j \le r}(r+1-j)i c_i c_j + \sum_{i=1}^r(r+1-i)i c_i.
 \end{equation}
 
We observe the behavior of the Casimir operator for $\mathfrak{sl}_{r+1}$ and weight $\lambda$ and $\mathfrak{sl}_{m(r+1)}$ and scaled weight $m\lambda(m)$:
  $$c_{m(r+1)} (m\lambda(m)) = m^3c_{r+1} (\lambda).$$
 
Now, we substitute \ref{mir} and use Fulton's conjecture for degrees. We then further substitute the above relationship of the casimir operator and a use Fulton's conjecture for rank one. 

\begin{multline}
 \operatorname{deg}\mathbb{V}(\sL_{m(r+1)},(\lambda_1(m),\ldots,\lambda_4(m)),\ell)\\=\frac{1}{m}\operatorname{deg}\mathbb{V}(\sL_{m(r+1)},(m\lambda_1(m),\ldots,m\lambda_4(m)),m\ell)\\
=\frac{1}{m} \frac{1}{(m(r+1)+m\ell)} \Big( \sum_{j=1}^4 c_{m(r+1)}(m\lambda_j(m))\\
-\sum_{\lambda=\sum_{i=1}^rc_i \omega_i }c_{m(r+1)}(m\lambda(m))\  \prod_{\{abcd\}=\{1234\}} \operatorname{rk}\mathbb{V}_{m\lambda_a(m),m\lambda_b(m),m\lambda(m)}\operatorname{rk}\mathbb{V}_{m\lambda_c(m),m\lambda_d(m),m\lambda^*(m)} \Big)\\
=\frac{1}{m^2} \frac{1}{((r+1)+\ell)} \Big( \sum_{j=1}^4 m^3c_{r+1}(\lambda_j)\\
-\sum_{\lambda=\sum_{i=1}^rc_i \omega_i }m^3c_{r+1}(\lambda)\  \prod_{\{abcd\}=\{1234\}} \operatorname{rk}\mathbb{V}_{\lambda_a(m),\lambda_b(m),\lambda(m)}\operatorname{rk}\mathbb{V}_{\lambda_c(m),\lambda_d(m),\lambda^*(m)} \Big)\\
\end{multline}

Now by canceling out all redundant factors of $m$, we are left with $$m \operatorname{deg}\mathbb{V}(\sL_{r+1},(\lambda_1,\ldots,\lambda_4),\ell).$$
\end{proof}

\begin{corollary}\label{converse}
For a $\mathfrak{sl}_2$ bundle $\mathbb{V}=\mathbb{V}(\mathfrak{sl}_{2},(c_1\omega_1, ..., c_n\omega_1), \ell)$, we have $\rk\mathbb{V}=1 $ if and only if $\rk{\mathbb{V}_m} = 1$. 
\end{corollary}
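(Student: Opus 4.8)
The plan is to prove the two implications separately, reducing the new (``converse'') direction to the scaling symmetry of Proposition \ref{GITScaling} read backwards. The implication ``$\rk\mathbb{V}=1 \Rightarrow \rk\mathbb{V}_m=1$'' requires nothing new: for $\mathbb{V}=\mathbb{V}(\mathfrak{sl}_2,(c_1\omega_1,\ldots,c_n\omega_1),\ell)$ the vertically scaled bundle of Definition \ref{Stetching} and Proposition \ref{GITScaling} (with $r+1=2$) is exactly $\mathbb{V}_m=\mathbb{V}(\mathfrak{sl}_{2m},(c_1\omega_m,\ldots,c_n\omega_m),\ell)$, so this direction is Proposition \ref{GITScaling} verbatim.

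For the converse I would re-run the strange-duality chain underlying the proof of Proposition \ref{GITScaling}, but starting from the hypothesis $\rk\mathbb{V}_m=1$ and exploiting that the chain is reversible. Level--rank (strange) duality identifies $\rk\mathbb{V}_m$ with $\rk\mathbb{W}[m]$, where $\mathbb{W}[m]:=\mathbb{V}(\mathfrak{sl}_\ell,(m\omega_{c_1},\ldots,m\omega_{c_{n-1}},m\widetilde{\omega_{c_n}}),2m)$ and $\widetilde{\omega_{c_n}}$ absorbs the cyclic twist; this bundle is precisely the $m$-fold \emph{horizontal} stretch (Definition \ref{Stetching}) of $\mathbb{W}:=\mathbb{V}(\mathfrak{sl}_\ell,(\omega_{c_1},\ldots,\omega_{c_{n-1}},\widetilde{\omega_{c_n}}),2)$, and strange duality again gives $\rk\mathbb{W}=\rk\mathbb{V}$. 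Thus the corollary reduces to showing that horizontal scaling in type $A$ cannot make a rank-one bundle out of one of rank $0$ or rank $\ge 2$: if $\rk\mathbb{W}[m]=1$ then $\rk\mathbb{W}=1$. Writing $Q(N):=\rk\mathbb{W}[N]$ for the associated Verlinde stretching quasi-polynomial, it suffices that $Q(0)=1$ (clear, the empty datum) and that $Q$ is non-decreasing on $N\ge 0$: then $Q(m)=1=Q(0)$ forces $Q(1)=\rk\mathbb{W}=\rk\mathbb{V}$ to equal $1$. Equivalently, by the already-established $m=1$ case of Theorem \ref{mainIntro} the only alternatives are $\rk\mathbb{V}=0$ (when $\Lambda<p$), which contradicts $Q(1)\ge Q(0)=1$, and $\rk\mathbb{V}\ge 2$ (when $\Lambda>p$, content not maximal), which gives $\rk\mathbb{V}_m=Q(m)\ge Q(1)=\rk\mathbb{V}\ge 2$; either way $\rk\mathbb{V}_m\neq1$, a contradiction.

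The one genuinely new ingredient — and the main obstacle — is this ``downward'' step: knowing that vertical scaling (equivalently, horizontal scaling on the strange-dual side) cannot strictly increase the rank past $1$. This is the equivalence form of the quantum Fulton conjecture of \cite{BGMB} (classically, the statement that a Littlewood--Richardson stretching polynomial has constant term $1$ and is monotone); once it is in hand the corollary follows formally. The only remaining care is bookkeeping: confirming that the cyclic twist $\widetilde{\omega_{c_n}}$ and the level/rank exchange transform correctly under the $m$-fold stretch, so that it is honestly $\mathbb{W}[m]$ that reappears after the two strange-duality steps.
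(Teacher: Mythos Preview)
Your approach is essentially the same as the paper's. Both arguments run the strange-duality chain of Proposition \ref{GITScaling} in reverse: the paper phrases the converse direction as ``apply the proof of Proposition \ref{GITScaling} with the non-integer scale $1/m$,'' while you unfold that chain explicitly and identify the one nontrivial step as the \emph{equivalence} (both directions) of quantum Fulton from \cite{BGMB}. That is exactly what ``the proof holds with scale $1/m$'' amounts to, since the middle step of Proposition \ref{GITScaling} becomes: if the horizontally $m$-stretched bundle on the dual side has rank one, so does the unstretched one.

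One caution: your paragraph invoking monotonicity of the Verlinde stretching function $Q(N)=\rk\mathbb{W}[N]$ is not self-contained, and the ``Equivalently'' case analysis that follows still uses the inequalities $Q(1)\ge Q(0)$ and $Q(m)\ge Q(1)$, so it is not actually an alternative route. In the classical (Littlewood--Richardson) setting the stretching function is a polynomial with nonnegative coefficients, hence monotone, but for Verlinde numbers this is more delicate and you do not justify it. This is harmless here because you correctly land on the right ingredient in your final paragraph (the if-and-only-if form of quantum Fulton from \cite{BGMB}); just drop the monotonicity heuristic and cite that directly, and your argument is then a cleaner and more explicit rendering of exactly what the paper does.
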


\begin{proof}
The first implication is Proposition \ref{bgkscaling}.

 For the converse let $\mathbb{V}=\mathbb{V}(\mathfrak{sl}_{2m},(c_1\omega_m, ..., c_n\omega_m), \ell)$ and $\mathbb{V}_k=\mathbb{V}(\mathfrak{sl}_{2mk},(c_1\omega_{mk}, ..., c_n\omega_{mk}), \ell)$. Using \ref{bgkscaling}, if $\rk(\mathbb{V}) = 1$ then $\rk(\mathbb{V}_{1/m}) =1$. The proof of Proposition \ref{bgkscaling} holds with a non-integer scale when $\mathbb{V}$ is of the above form and so $\mathbb{V}_{1/m}$ is defined. 
 This shows the converse of Proposition \ref{bgkscaling} in the special case of interest.

\end{proof}

\section*{References}
\begin{biblist}

\bib{ags}{article}{
  author={Alexeev, V.},
   author={Gibney, A.},
   author={Swinarski, D..},
   title={Conformal Blocks Divisors on $\bar{M}_{0,n}$ from $\mathfrak{sl}_2$},
}

\bib{ags}{article}{
  author={Alexeev, Valery},
  author={Gibney, Angela},
  author={Swinarski, David},
  title={Higher level conformal blocks on $\overline {\operatorname {M}}_{0,n}$ from $\sL _2$},
  journal={Proc. Edinb. Math. Soc., to appear (arXiv:1011.6659 [math.AG])}
  year={2010},
  %note={},
}

\bib{Bertram}{article}{
	author={Bertram, A},
	author={Ciocan-Fontanine, I.},
	author={Fulton, W.},
	title={Quantum multiplication of Schur polynomials}
}

\bib{BelkaleWittenDic}{article}{
	author={Prakash Belkale},
	title={Quantum generalization of the Horn conjecture}
	journal={Journal of the American Mathematical Society},
	date={2007},
	volume={21},
	number={2},
	pages={365-408}
	   %issn={},
  	 %review={},
  	 %doi={},
}

\bib{bgk}{article}{
   author={Belkale, Prakash},
   author={Gibney, Angela},
   author={Kazanova, Anna},
   title={Scaling of conformal blocks and generalized theta functions over $\overline {\operatorname {M}}_{0,n}$},
   %journal={},
  % volume={},
   date={2015},
   %number={},
   %pages={},
   %issn={},
   %review={},
   %doi={},
   note={arXiv:1412.7204v2  [math.AG]}
}	

\bib{bgm}{article}{
   author={Belkale, Prakash},
   author={Gibney, Angela},
   author={Mukhopadhyay, Swarnava},
   title={Quantum cohomology and conformal blocks on $\overline{\operatorname{M}}_{0,n}$},
   %journal={},
  % volume={},
   date={2013},
   %number={},
   %pages={},
   %issn={},
   %review={},
   %doi={},
   note={arXiv:1308.4906 [math.AG]}
}	

\bib{BGMB}{article}{
  author={Belkale, Prakash},
  author={Gibney, Angela},
  author={Mukhopadhyay, Swarnava}
  title={Nonvanishing of conformal blocks divisors},
  journal={},
  volume={},
  date={2014},
  note={See arXiv:1410.2459 [math.AG]}
  %number={},
  %pages={},
  %issn={0894-0347},
  %review={},
  %doi={},
}

\bib{Fakh}{article}{
   author={Fakhruddin, Najmuddin},
   title={Chern classes of conformal blocks},
   conference={
      title={Compact moduli spaces and vector bundles},
   },
   book={
      series={Contemp. Math.},
      volume={564},
      publisher={Amer. Math. Soc.},
      place={Providence, RI},
   },
   date={2012},
   %pages={145--176},
   %review={\MR{2894632}},
   %doi={10.1090/conm/564/11148},
}

\bib{GiansiracusaGibney}{article}{
  author={Giansiracusa, Noah},
  author={Gibney, Angela},
  title={The cone of type A, level 1 conformal block divisors},
  journal={Adv. Math.}
  volume={231},
  page={798--814},
  year={2012},
}

\bib{TUY}{article}{
   author={Tsuchiya, Akihiro},
   author={Ueno, Kenji},
   author={Yamada, Yasuhiko},
   title={Conformal field theory on universal family of stable curves with
   gauge symmetries},
   conference={
      title={Integrable systems in quantum field theory and statistical
      mechanics},
   },
   book={
      series={Adv. Stud. Pure Math.},
      volume={19},
      publisher={Academic Press, Boston, MA},
   },
   date={1989},
   pages={459--566},
   review={\MR{1048605 (92a:81191)}},
}

\iffalse
\bib{gkm}{article}{
  author={Gibney, Angela},
  author={Keel, Sean},
  author={Morrison, Ian},
  title={Towards the ample cone of $\overline {\operatorname {M}}_{0,n}$},
  journal={J. Amer. Math. Soc.},
  volume={15},
  date={2002},
  number={2},
  pages={273--294 (electronic)},
  issn={0894-0347},
  review={\MR {1887636 (2003c:14029)}},
  doi={10.1090/S0894-0347-01-00384-8},
}
\fi

\end{biblist}

\end{document}